\numberwithin{equation}{section}
\def\slope{\mathrm{slope}}
\def\ZZ{\mathbb Z} 
\def\Irre{\mathrm{Irre}}
\def\Aut{\mathrm{Aut}}
\def\upmo{^{-1}}
\def\defeq{:=}
\def\beq{\begin{equation}}
\def\eeq{\end{equation}}
\def\sub{\subset}
\def\lra{\longrightarrow}
\def\sta{^\ast}
\def\sO{{\mathscr O}}
\def\sC{{\mathscr C}}
\def\PP{\mathbb P}
\def\CC{\mathbb C}
\def\sO{{\mathscr O}}
\def\sB{{\mathscr B}}
\def\sL{{\mathscr L}}
\def\sO{\mathscr{O}}
\newcommand{\Si}{\Sigma}
\newtheorem{prop}{Proposition}[section]
\newtheorem{theo}[prop]{Theorem}
\newtheorem{lemm}[prop]{Lemma}
\newtheorem{coro}[prop]{Corollary}
\newtheorem{rema}[prop]{Remark}
\newtheorem{defi}[prop]{Definition}
\newcommand{\git}{/\!\!/}
\newtheorem{definition}[prop]{Definition}
\newtheorem{subl}[prop]{Sublemma}
\newtheorem{defi-prop}[prop]{Definition-Proposition}
\newtheorem{defi-theo}[prop]{Definition-Theorem}
\renewcommand{\tilde}{\widetilde}
\title[Stability condition in GLSM]{Stability conditions in the mathematical Gauged Linear Sigma Model}
\author{Huai-Liang Chang$^1$}
\author{Shuai Guo$^2$}
\author{Jun Li$^3$}
\author{Wei-Ping Li$^4$}
\author{Yang Zhou$^3$}
\address{$^1$ School of Mathematics and Statistics, Wuhan University.}
\address{$^2$ School of Mathematical Sciences, Peking University.}
\address{$^3$ Shanghai Center for Mathematical Sciences, Fudan University.}
\address{$^4$\ Department of Mathematics, The Hong Kong University of Science and Technology.}
\begin{document}
\maketitle
\begin{abstract}
  The theory of Mixed-Spin-P (MSP) fields was introduced by Chang-Li-Li-Liu for
  the quintic threefold, aiming at studying its higher-genus Gromov-Witten
  invariants. Chang-Guo-Li has successfully applied it to prove conjectures
  including the BCOV Feynman rule, Yamaguchi-Yau's polynomiality conjecture and
  the Holomorphic Anomaly Equation. Meanwhile, Fan-Jarvis-Ruan introduced a
  mathematical theory of Gauged Linear Sigma Model (GLSM), associating a counting
  theory to a GIT quotient with a super-potential, under suitable assumptions.

  This paper provides a common generalization of both works, by introducing
  new stability conditions in the mathematical GLSM. We show that our stability
  condition guarantees the separatedness and properness of the cosection
  degeneracy locus in the moduli.
  It generalizes the MSP fields construction to more general GIT quotients,
  including Calabi-Yau global complete intersections in toric varieties.
  This hopefully provides a
  geometric platform to effectively compute their higher-genus Gromov-Witten invariants.
\end{abstract}
\tableofcontents

\section{Introduction}
\label{sec:introduction}
The theory of Mixed-Spin-P (MSP) fields was developed in~\cite{chang2019mixed}.
It is a curve counting theory that bridges the Gromov-Witten
  theory of a hypersurface and its corresponding quantum singularity theory,
  also providing an effective algorithm for
  computing these Gromov-Witten invariants and FJRW invariants. As applications,
  it leads to a proof of the Bershadsky-Cecotti-Ooguri-Vafa (BCOV) Feynman sum
  conjecture for the Gromov-Witten invariants of quintic Calabi-Yau threefolds ,
  as well as the Yamaguchi-Yau polynomiality and the Holomorphic Anomaly
  Equation (HAE)
  (see the series of works \cite{chang2016effective, chang2018theory,
    chang2018polynomial, chang2018bcov}). 
The main motivation of this paper is to generalize MSP theory to a larger
class of Calabi-Yau threefolds, as a first step of studying the structure of
their higher-genus Gromov-Witten invariants.

The results of the paper applies to a wide class of target spaces, generalizing the theory
in \cite{fan2017mathematical}. It allows one to define more counting 
theories which are now often called Gauged Linear Sigma Model (GLSM) in the mathematics context.
We will discuss some examples other than MSP in Section~\ref{sec:examples}.
Besides the algebro-geometric construction, there are also constructions of
GLSM in symplectic geometry \cite{TianXuGLSM, TianXuVirtualCycle}.

\medskip 

In the remainder of this section, we briefly review the construction of MSP fields for the
quintic threefolds and its relation to \cite{fan2017mathematical}. Then we
introduce the main results of this paper, leaving the precise statements to the
next section.

Let $X\subset \mathbb P^4$ be any smooth quintic threefold, defined by a
homogeneous  polynomial
$F(x_1 ,\ldots, x_5)$. The first step towards
studying the higher genus Gromov-Witten invariants via MSP is to write $X$
as the critical locus of the function
\[
  p F: K_{\mathbb P^4} \longrightarrow \mathbb C,
\]
where $K_{\mathbb P^4}$ is the total space of the canonical bundle of $\mathbb
P^4$ and $p$ is the coordinate on the fibers. The function $pF$ is called a super-potential.
 Motivated by the P-fields in physics \cite{witten1993phases},
\cite{chang2020invariants} introduced the moduli $\overline{M}_{g}(\mathbb P^4,
d)^{p}$ of stable maps to $\mathbb P^4$ with P-fields. Concretely, it parametrizes tuples 
\[
  \xi = (\sC,  \mathscr L, \varphi, \rho),
\]
where
\begin{itemize}
\item
  $\sC$ is a nodal curve of genus $g$,
\item
  $\mathscr L$ is a line bundle on $\mathscr C$ of degree $d$,
\item
  $\varphi = (\varphi_1, \ldots, \varphi_5) \in H^0(\mathscr C, \mathscr
  L^{\oplus 5})$, $\rho \in H^0(\mathscr L^{\otimes (-5)} \otimes
  \omega_{\mathscr C})$, where $\omega_{\mathscr C}$ is the log dualizing sheaf
  of $\mathscr C$,
\end{itemize}
subject to the stability condition that
\[
  \sC \overset{\varphi}{\longrightarrow} \mathbb P^4
\]
is a stable map.

Note that if we did not have $\omega_{\mathscr C}^{\log}$ in
the definition of $\rho$ above, then the tuple $\xi$ would have been a stable
map
\[
  \sC \overset{(\varphi, \rho)}{\longrightarrow} K_{\mathbb P^4}.
\]
Thus, heuristically, $\xi$ is a stable map to $K_{\mathbb P^4}$ twisted by
$\omega_{\mathscr C}$ together with the $\mathbb C^*$-action on $K_{\mathbb
  P^4}$. More precisely, suppose $\{\mathscr U_{i}\}$ is an open cover of
$\mathscr C$ and  $s_i \in H^0(\mathscr U_i, \omega_{\mathscr C}|_{\mathscr
  U_i})$ are nonvanishing sections. Then, trivializing $\omega_{\mathscr
  C}|_{\mathscr U_i}$ by $s_i$, the restriction of $\xi$ to $\mathscr U_i$
becomes a map 
\[
  f_i: \mathscr U_i \longrightarrow K_{\mathbb P^4}.
\]
On the overlaps, $f_i$ and $f_j$ differ by the action of $s_i/s_j$ on the fibers
of $K_{\mathbb P^4}$. The $\mathbb C^*$-action on the fibers of $K_{\mathbb
  P^4}$ is called the $R$-charge.

For $g>0$, $\overline{M}_{g}(\mathbb
P^4, d)^{p}$ is not proper. However, by the cosection localization of Kiem-Li
\cite{kiem2013localizing},
a virtual cycle is constructed in 
% the  of $\overline{M}_{g}(\mathbb
% P^4, d)^{p}$ can by c
the so-called cosection degeneracy locus. In
this case,  the cosection degeneracy locus is equal to
\[
  \overline{M}_{g}(X, d) = \{\xi \in \overline{M}_{g}(\mathbb
  P^4, d)^{p} \mid \rho\equiv 0,  F(\varphi) \equiv 0\}.
\]
Namely, it is the locus where the maps $f_i$ above all land in the critical locus
\[
  X = \mathrm{Crit}(pF) \subset K_{\mathbb P^4}.
\]
Note that this condition is independent of the choice of the local
trivializations $s_i$, since $X \subset K_{\mathbb P^4}$ is fixed by the $R$-charge.
It was shown in \cite{chang2020invariants} that the cosection localized virtual
cycle equals the virtual cycle in Gromov-Witten theory up to an explicit sign.

Replacing $(K_{\mathbb P^4}, pF)$ by $([\mathbb C^5 / \bm{\mu}_5], F)$,
where $\mu_5$ is the group of $5$th roots of unity acting on $\mathbb C^5$ by
scalar multiplication, the same construction above defines the
Fan-Jarvis-Ruan-Witten (FJRW) theory for
the singularity $F=0$ \cite{fan2013witten, chang2015witten}.

\medskip
Those two theories are closely related. Indeed,
$K_{\mathbb P^4}$ and $[\mathbb C^5/\bm{\mu}_5]$ are two GIT quotients of $\mathbb
C^{6}$ by $\mathbb C^*$, where the weights of the action are
\[
  (1,1,1,1,1, -5).
\]
Witten's discovery of the GLSM \cite[page~178]{witten1993phases} predicted the existence of a family of
theories interpolating between the Gromov-Witten theory of $X$ and the FJRW theory of
$([\mathbb C^5/\bm{\mu}_5], F)$. 
Inspired by Witten's insight, \cite{chang2019mixed} applied the 
variation of GIT (vGIT) master space construction (c.f.\
\cite[Section~3.1]{thaddeus1996geometric}) and defined MSP for the
quintic, which counts curves
``mapping'' into the master space
\[
  \mathcal Y =  \mathbb C^6 \times \mathbb C^2 \git_{\theta} \mathbb C^* \times \mathbb C^*,
\]
where
\begin{itemize}
\item
  the standard coordinates on $\mathbb C^6 \times \mathbb C^2$ denoted by $(x_1 ,\ldots, x_5, p, u, v)$,
\item
  the weights the $\mathbb C^* \times \mathbb C^*$-action are
  \[
    \begin{bmatrix}
      1 & 1 & 1 & 1 & 1 & -5 & 1 & 0 \\
      0 & 0 & 0 & 0 & 0 &  0 & 1 & 1
    \end{bmatrix}
  \]
\item
  the character $\theta$ is $\theta(t_1, t_2) = t_1t_2^2$, $t_1,t_2 \in \mathbb C^*$.
\end{itemize}

It is not hard to compute that the unstable locus is defined by
\[
  x_1 = \cdots = x_5 = u = 0, \quad u = v = 0, \quad p = u = 0.
\]
The space $\mathcal Y$ carries an additional $\mathbb C^*$ action only scaling the
$u$ coordinate. Thus, $\mathcal Y$ is a vGIT master space in the sense that both 
\begin{equation}
  \label{eq:KP4-and-C5-as-slices}
  K_{\mathbb P^4} \cong \{(u = 0)\} \subset \mathcal Y, \quad 
  [\mathbb C^5 / \bm{\mu}_5] \cong \{(v = 0)\} \subset \mathcal Y
\end{equation}
are among the fixed loci. 
Repeating the P-field construction using the space
$\mathcal Y$ yields the theory of MSP for the quintic \cite{chang2019mixed}. We will revisit
the details of MSP in Section~\ref{sec:brief}, as a special case of the main theorem of
this paper.

In parallel with the development of MSP, Fan-Jarvis-Ruan introduced a mathematical
theory of GLSM \cite{fan2017mathematical}, which applies the P-field
construction of \cite{chang2015witten, chang2020invariants}
to a broad class of GIT quotients, combined with the idea of $\epsilon$-stable
quasimaps \cite{ciocan2014stable}.
Their moduli parametrizes so-called LG-quasimaps. 
When taking the GIT quotient to be $K_{\mathbb P^4}$ or $[\mathbb C^5 / \bm{\mu}_5]$,
their theory are the same as those in \cite{chang2015witten, chang2020invariants}.

However, applying \cite{fan2017mathematical} to the master space $\mathcal Y$ does not
produce the MSP. One of the main difficulties of defining GLSM invariants for the
target space $\mathcal Y$ via the P-field construction is how to choose a
stability condition to guarantee the properness of the cosection degeneracy
locus. Such properness is crucial to defining invariants using intersection
  theory, and crucial to the master space technique via virtual localization.
The finite-automorphism condition in \cite{chang2019mixed} was ad
hoc and do not generalize well to other targets. In general it yields a
non-separated moduli space.
While the general stability in \cite{fan2017mathematical} requires a so-called good lift, which rarely
exists for vGIT master spaces like $\mathcal Y$.

In the absence of a good lift,  \cite{fan2017mathematical} only has
$\epsilon = 0^+$ stability, which allows the LG quasimap to meet the
unstable points \cite[Section~7.5.2]{fan2017mathematical}. In other words, the actual target space has to be the Artin stack quotient
$\mathfrak Y:= [\mathbb C^6 \times \mathbb C^2 / \mathbb C^* \times \mathbb
C^*]$ instead of $\mathcal Y$. 

For the purpose of studying higher genus Gromov-Witten invariants via MSP, 
a key difference is that, the two slices defined by $u = 0$ and $v=0$
intersect nontrivially in $\mathfrak Y$, in contrast to 
\eqref{eq:KP4-and-C5-as-slices}, where they do not intersect in $\mathcal Y$.
In the proof of the BCOV Feynman sum conjecture in
\cite{chang2018polynomial, chang2018bcov},
the authors utilized the disjointness of the copies of $K_{\mathbb P^4}$ and
$[\mathbb C^5/\bm{\mu}_5]$ in $\mathcal Y$ (see \eqref{eq:KP4-and-C5-as-slices}
above) to
separate their contributions to the MSP invariants. Moreover, the critical locus of the
super-potential $pF: \mathcal Y \to \mathbb C$ is equal to
\[
  \{(F = p = 0)\} \cup \{x_1 = \cdots = x_5 = 0\}
\]
Note that by the stability condition, $v$ must be nonvanishing on
\[
  C_{X}:= \{(F = p = 0)\}\subset \mathcal Y,
\]
and $(x_1 ,\ldots, x_5, u)$ defines an embedding $C_X \hookrightarrow\mathbb P^5$.
It is not hard to see that $C_X$ is isomorphic to the projective
cone $C_{X}$ over the quintic $X$ in $\mathbb P^5$. Counting curves in
$C_{X}$ is referred to as the $[0,1]$-theory in \cite{chang2018polynomial, chang2018bcov}.
In N-MSP, a variant of MSP where $\mathbb P^5$ is replaced by $\mathbb P^{4+N}$,
counting certain rational curves in $C_{X}$ gives the propagators in the BCOV Feynman sum.
% Moreover, counting
% ``maps'' from rational curves onto the $1$-dimensional orbits {\red yields key
% information about the so-called propagators} in the BCOV Feynman rule.
It is
not clear to us how to generalize those ideas working instead with Artin stack
like $\mathfrak Y$, where the closure of
$K_{\mathbb P^4}$ and $[\mathbb C^5/\bm{\mu}_5]$
intersect nontrivially.

In this paper we define a new stability for GLSM, called $\Omega$-stability,
which refines the quasimap stability by introducing a
notion of slopes incorporating the so-called $R$-charges.
The GLSM defined using $\Omega$-stability will cover both \cite{chang2019mixed}
and \cite{fan2017mathematical}. In the example of $K_{\mathbb
P^4}$ discussed above, the $R$-charge is reflected in appearance of
$\omega_{\mathscr C}$ in
\[
  \rho \in H^0(\mathscr L^{\otimes (-5)} \otimes \omega_{\mathscr C}).
\]
Hence we chose the Greek letter omega.

Applying the results of the paper to other vGIT master spaces will generalize
MSP, as a first step towards studying the structure of higher genus
Gromov-Witten invariants of complete intersections in toric varieties.
It would be interesting to know which of them can produce results like the
BCOV's Feynman rule.
At least for Calabi-Yau complete intersection in product of projective spaces,
BCOV's Feynman rule can be proved \cite{CICYBCOV}.

For Fermat singularities, Huaigong Zhang's work in progress \cite{Huaigong1, Huaigong2}
uses the results of this paper to construct an MSP moduli, obtains its BCOV
Feynman rule as well as its consequences including finite generation, the HAE,
and the analyticity of the partition function.

When applied to target spaces other than vGIT master spaces, it also
produces interesting counting theories. 
For example, in Section~\ref{sec:projective-space-example}, we discuss
projective spaces with various $R$-charges. In this case the target space is already proper so 
the super-potential or cosection localization will not be needed.
A work in progress by Guo-Janda-Lei-Zhang employs the construction
in this work to provide an enumerative geometric construction of the well-known
GUE matrix model. In Section~\ref{sec:complete-intersection-example}, we
discuss a family of LG theories for complete intersections, which
  are related to hybrid models in physics (c.f.\
  \cite{hybrid1,hybrid2,hybrid3,hybrid4,hybrid5}).

\medskip
\noindent
\textbf{Acknowledgments}
We would like to thank Professors Bumsig Kim, Yongbin Ruan, Jun Yu,
Rachel Webb, Dingxin Zhang for their valuable discussion with the authors.

Both
J.~Li and Y.~Zhou are partially supported by the National Key Res.\ and
Develop.\ 
Program of China \#2020YFA0713200.
H.-L.~Chang is
partially supported by NSFC grants 24SC04, the Fundamental Research Funds for
the Central Universities, and Hong Kong grant GRF 16303122.
S.~Guo is partially supported by NSFC 12225101 and 11890660.  J.~Li is also partially
supported by NSFC 12071079, and
by Shanghai SF grants 22YS1400100. W.-P.~Li is partially supported by grants
Hong Kong GRF16304119, GRF16306222 and GRF16305125.
Y.~Zhou is also partially supported by Shanghai Sci.\ and Tech.\ Develop.\ Funds
\#22QA1400800 and
Shanghai Pilot Program for Basic Research-Fudan Univ.\ 21TQ1400100 (22TQ001).
Y.~Zhou would also like to thank the
support of Alibaba Group as a DAMO Academy Young Fellow, and would like to
thank the support of Xiaomi Corporation as a Xiaomi Young Fellow.

\section{A brief outline of the theory}
\label{sec:brief}
We begin with the notion of prestable
LG-quasimaps, introduced in
\cite{fan2017mathematical}. Our setup is slightly more general.
It relies on
what we call
an ($R$-charged) package, consisting of
\begin{equation}
  \label{eq:outlie-package}
  (V,G\le\Gamma, \varpi, \vartheta),
\end{equation}
where $V$ is an affine scheme acted on by
a reductive group $\Gamma$;
$\varpi, \vartheta\in \widehat{\Gamma}$
are characters of $\Gamma$; $G = \ker{(\varpi)}$ and $\varpi$ induces
an isomorphism $\Gamma/G \cong \mathbb C^*$.
The action of this $\mathbb C^*$ (e.g.\ Lemma~\ref{lem:second-decomposition}) is
similar to the $R$-charge in Landau-Ginzburg model, although we do not always
require a super-potential in this paper.

Note that $G$ is also reductive.
We set $\theta = \vartheta|_{G}\in \widehat{G}$, and denote
the $\theta$-stable (resp.~semistable, resp.~unstable) locus by $V^{\mathrm{s}}(\theta)$
(resp.~$V^{\mathrm{ss}}(\theta)$, resp.~$V^{\mathrm{un}}(\theta)$).
We require that $V^{\mathrm{s}}(\theta)=V^{\mathrm{ss}}(\theta)\ne\emptyset$.

Let
\[
  X = [V^{\mathrm{s}}(\theta)/G]
\]
and we call the package $(V,G\le\Gamma, \varpi, \vartheta)$ a package of $X$.
As customary, we use the bracket to denote the quotient stack and use
$V\git_{\theta} G$ to denote the GIT quotient. Thus the quasi-projective scheme
$V\git_{\theta} G$ is the coarse moduli of the separated DM stack $X$.
The role of $X$ is similar to the target space in Gromov-Witten theory
(c.f.\ Remark~\ref{rmk:target-space}).

For a character $\alpha\in \widehat\Gamma$, we associate to it the
$\Gamma$-equivariant line bundle $L_\alpha$ on $V$, whose total space is  $\mathbb C_V\defeq V\times
\mathbb C$,
with $\Gamma$-action given by
$\gamma\cdot(v,a)=(\gamma\cdot v, \alpha(\gamma)a)$ for any $\gamma \in \Gamma$. By abuse of notation, we
also use $L_{\alpha}$ to denote its descent to $[V/\Gamma]$. Similarly, for
$\alpha\in \widehat{G}$, we use $L_{\alpha}$ to denote its associated $G$-equivariant
line bundle on $V$ or its descent to $[V/G]$.

We follow the convention of
\cite[Section~4]{abramovich2002compactifying} on
balanced twisted nodal curves; thus, given a twisted curve $\sC$
with marking $\Sigma^\sC$, a special point means a reduced irreducible
$0$-dimensional substack $z\sub\sC$ that is either a marking or a node of
$\mathscr C$. We denote the log dualizing bundle of $(\mathscr C, \Sigma^{\mathscr
  C})$ by $\omega_{\mathscr C}^{\log} =
\omega_{\mathscr C}(\Sigma^{\mathscr C})$.

\begin{definition} \label{def:prestable-LG-quasimap}
  Given a package of $X$, a genus-$g$ $k$-pointed prestable LG-quasimap to $X$ is
  \[
    \xi=(\mathscr C,\Sigma^\sC, u, \kappa),
  \]
  where
  \begin{enumerate}
  \item $\Sigma^\sC \subset \mathscr C$ is a $k$-pointed genus-$g$ balanced twisted nodal curve;
  \item
    $u: \sC \to [V/\Gamma]$ is a representable morphism, such that the base locus
    $u\upmo([V^{\mathrm{us}}(\theta))/\Gamma]$ of $u$
    is discrete and away from the special points of $(\mathscr C, \Sigma^\sC)$,
  \item
    $\kappa: u^*L_{\varpi} \to \omega^{\log}_\sC$ is an isomorphism of line bundles.
  \end{enumerate}
\end{definition}

Note that the definition above is parallel to Definition~4.2.2 of
\cite{fan2017mathematical}, but we are considering more general target
spaces here, e.g. Section~\ref{sec:projective-space-example}.

\begin{rema}
  \label{rmk:target-space}
  It is easy to see that $V^{\mathrm{ss}}(\theta)$ is $\Gamma$-invariant
  (Corollary~\ref{cor:Gamma-invariance-unstable-locus}) and thus $\varpi$
  induces a $\mathbb C^*$-action on $X$.
  % This is sometimes referred to as the R-charge in the physics literature,
  % according to \cite{fan2017mathematical}.
  Roughly speaking, an LG-quasimap associated to the package is a quasimap to $X$
  ``twisted'' by $\omega^{\log}_\sC$ and the $\mathbb C^*$-action on $X$.
  Indeed, locally on $\mathscr C$, once we pick a local trivialization for
  $\omega^{\log}_\sC$, the LG-quasimap becomes a map to $X$ defined away from
  the base locus (c.f.\ Lemma~\ref{lem:map-vs-LG}). Moreover, for different
  trivializations $s_1$ and $s_2$, the two maps
  differ by the action of $\frac{s_1}{s_2}$ on $X$.
\end{rema}
\smallskip
We now introduce our $\Omega$-stability of an LG-quasimap.
Here $\Omega$ is a triple
\begin{equation}
  \label{eq:Pi}
  \Omega = (S, A, \vartheta),
\end{equation}
where 
$\vartheta \in \Gamma$ is already in \eqref{eq:outlie-package}, $A \in \mathbb Q$, and $S$ is a finite set of
nonzero  homogeneous elements in the $R_{+}$ to be defined,
subject to condition \eqref{eq:A-greater-than-slope} below.
We first clarify the meaning of homogeneous elements.
For integers $k$ and $c$, let
\[
  R_{k\theta} = H^0(V, L_{k\theta})^{G}, \quad R_{k\vartheta, c\varpi} = H^0(V,
  L_{k\vartheta + c\varpi})^{\Gamma}.
\]
We will see that (Lemma~\ref{lem:second-decomposition})
\begin{equation}
  \label{eq:second-decomposition}
  R_{k\theta} = \bigoplus_{c\in \mathbb Z} R_{k\vartheta, c\varpi}.
\end{equation}
Set
\[
  R = \bigoplus_{k\geq 0} R_{k\theta}, \quad R_{+} = \bigoplus_{k> 0} R_{k\theta}.
\]
Thus $V\git_{\theta} G = \mathrm{Proj}(R)$ and $R_{+}$ generates the
ideal of $V^{\mathrm{us}}(\theta)$.
Note that \eqref{eq:second-decomposition} makes $R$ into a bi-graded ring.
A homogeneous element of $R$ is an element in $R_{k\vartheta,c\varpi}$ for some $(k,c)$.

Let $S\subset R_{+}$ be a finite set of nonzero homogeneous elements.
We say $S$ is \textit{full} when
\begin{equation}
  \label{eq:being-full}
  V^{\mathrm{ss}}(\theta) = \{x \in V \mid \exists f\in S, f(x) \neq 0\},
\end{equation}
i.e.,  when the common vanishing locus of the elements in $S$ equals the unstable
locus $V^{\mathrm{us}}(\theta)$ set-theoretically.

For a homogeneous $f\ne 0\in R_{k\vartheta,c \varpi} \subset R_{k\theta}$,
we define its $\theta$-weight, its $R$-charge and its slope, respectively, to be
\[
  {\mathrm{wt}}_{\theta}(f) = k, \quad {\mathrm{wt}}_{R}(f) = c,
  \quad \mathrm{slope}_{\vartheta}(f) = \frac{c}{k} \ (\mathrm{when}\ k>0).
\]
We will drop the $\vartheta$ from the subscript when it is clear from
the context.

For the stability parameter $A\in \mathbb Q$ in $\Omega$, we require that
\begin{equation}
  \label{eq:A-greater-than-slope}
  A > \max_{f\in
    S}\{\mathrm{slope}(f)\}.
\end{equation}
This completes the definition of the triple $\Omega$.

We call a $\mathbb Q$-line bundle on a twisted curve $\mathscr C$ positive if it has positive
degree on each  irreducible component.
\begin{definition}
  \label{stability}
  A prestable LG-quasimap $\xi$ to $X$
  is $\Omega$-stable if all the following hold:
  \begin{itemize}
  \item[{\bf $\Omega$-1}.]
    $u^{-1}([V(S)/\Gamma])$ is
    discrete and disjoint
    from the special points of $(\sC, \Sigma^\sC )$;
  \item[$\Omega$-{\bf 2}.] for any non-special closed point $x\in \mathscr C$,
    \[
      \min_{f\in S} \{ \frac{1}{\mathrm{wt}_{\theta}(f)} \mathrm{ord}_{x}(u^*f) +
      \mathrm{slope}(f) \}\leq A,
    \]
    where $\mathrm{ord}_x(u^*f)$ denotes the vanishing order of $u^*f$ at $x$;
  \item[$\Omega$-{\bf 3}.] the $\mathbb Q$-line bundle $u^* L_{\vartheta} \otimes
    (\omega_\sC^{\log})^{\otimes A}$
    is positive.
  \end{itemize}
\end{definition}
Note that near any non-special closed point $x\in \mathscr C$, $\mathscr C$ is a
regular scheme of dimension $1$. Thus the vanishing order of a section of a line bundle is
well-defined.

We make a simple observation to simplify $\Omega$-3 in certain situations.
Suppose $\xi$ already satisfies  $\Omega$-2, and $\mathscr C^\prime \subset
\mathscr C$ is an irreducible component.
If $\deg(\omega_{\mathscr C}^{\log}|_{\mathscr C^\prime}) > 0$, then the line
bundle in $\Omega$-3 is automatically positive on $\mathscr C^\prime$; if
$\deg(\omega_{\mathscr C}^{\log}|_{\mathscr C^\prime}) = 0$, then the line
bundle is positive on $\mathscr C^\prime$ unless the LG-quasimap is
a constant regular map on $\mathscr C^\prime$.
See Corollary~\ref{cor:non-negative-components} for the precise statement and proof.

\medskip
We denote the category of $k$-pointed genus-$g$ $\Omega$-stable (resp.~prestable)
LG-quasimaps to $X$ by $\mathrm{LGQ}^{\Omega}_{g,k}(X)$
(resp.~$\mathrm{LGQ}^{\mathrm{pre}}_{g,k}(X)$). For $\xi \in
\mathrm{LGQ}^{\mathrm{pre}}_{g,k}(X)$, we define the
($\vartheta$-)degree of $\xi$ to be $\deg_\sC(u^*L_{\vartheta})$.
We let $\mathrm{LGQ}^{\mathrm{pre}}_{g,k}(X, d)$ and
$\mathrm{LGQ}^{\Omega}_{g,k}(X, d)$ be the open and closed substacks of degree-$d$ prestable and $\Omega$-stable
LG-quasimaps, respectively.

\medskip
The technical part of this paper is to prove the following theorems.

\begin{theo}
  \label{thm:intro-separated-Deligne--Mumford-finite-type}
  The stack $\mathrm{LGQ}^{\Omega}_{g,k}(X,d)$ is a separated Deligne-Mumford stack of finite type.
\end{theo}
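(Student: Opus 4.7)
The plan proceeds in three stages: algebraicity together with the Deligne--Mumford property, boundedness (which gives finite type), and separatedness via the valuative criterion; the last is the main obstacle. First, I would realize $\mathrm{LGQ}^{\mathrm{pre}}_{g,k}(X,d)$ as an open substack of the mapping stack whose objects are $k$-pointed genus-$g$ balanced twisted nodal curves $\sC$ together with a representable morphism $u:\sC\to[V/\Gamma]$ of fixed $\vartheta$-degree, equipped with the isomorphism $\kappa:u^*L_\varpi\xrightarrow{\sim}\omega_\sC^{\log}$. Algebraicity and local finite presentation of this mapping stack follow from Olsson's Hom-stack theorem combined with the Abramovich--Olsson--Vistoli theory of twisted curves; the datum $\kappa$ is a section of a $\mathbb G_m$-torsor, which keeps the resulting stack algebraic. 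The prestable base-locus condition and each of $\Omega$-1, $\Omega$-2, $\Omega$-3 are open: $\Omega$-1 reduces to the openness of the condition that a finite set stays away from the special points, $\Omega$-2 follows from upper semicontinuity of vanishing orders once $\Omega$-1 holds, and $\Omega$-3 is the openness of positivity of a $\mathbb Q$-line bundle on every component in a flat family of twisted curves. For the Deligne--Mumford property, $\Omega$-3 produces a relatively ample $\mathbb Q$-line bundle $u^*L_\vartheta\otimes(\omega_\sC^{\log})^{\otimes A}$ on $\sC$ that is preserved by every automorphism of $\xi$; combined with representability of $u$, this forces the automorphism group scheme to be finite and reduced.

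Second, for boundedness I would use $\Omega$-3 to bound both the combinatorial type of $\sC$ and the distribution of $u^*L_\vartheta$ across its components. The total degree of $u^*L_\vartheta\otimes(\omega_\sC^{\log})^{\otimes A}$ equals $d+A(2g-2+k)$, and requiring positivity on every component gives an upper bound on the number of components as well as a uniform lower bound on the contribution of each component. On each component, $\Omega$-2 uniformly bounds the vanishing order of every $u^*f$ at a non-special point by $\mathrm{wt}_\theta(f)\bigl(A-\mathrm{slope}(f)\bigr)$, which, by fullness of $S$, controls the tangency of $u$ with the unstable locus. Standard boundedness results for stable quasimaps, in the spirit of Ciocan-Fontanine--Kim, then assemble these bounds into a finite type family.

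The principal obstacle is separatedness. Given a DVR $R$ with fraction field $K$ and two $\Omega$-stable LG-quasimaps $\xi_1,\xi_2$ over $\mathrm{Spec}(R)$ with an isomorphism over $\mathrm{Spec}(K)$, I would, after a finite base change, pass to a common modification of the two underlying twisted curves on which both $u_1$ and $u_2$ are defined and agree on the generic fiber. The two central fibers can then differ only along exceptional rational components $E$ of this modification, on which $\deg(\omega_\sC^{\log}|_E)=0$ and, via $\kappa$, $\deg(u^*L_\varpi|_E)=0$. The heart of the argument is to show that no such $E$ can support a nontrivial difference consistent with $\Omega$-stability: here $\Omega$-3 combined with Corollary~\ref{cor:non-negative-components} forces $u|_E$ to be non-constant, $\Omega$-1 confines the base locus to finitely many non-special points of $E$, and an intersection-theoretic bookkeeping using $\Omega$-2 together with the bi-grading \eqref{eq:second-decomposition} produces a numerical contradiction against the strict inequality $A>\max_{f\in S}\mathrm{slope}(f)$ in the definition of $\Omega$. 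This strict inequality is the crux of the new stability; it is precisely the mechanism by which $\Omega$-stability repairs the non-separatedness caused by the ad hoc finite-automorphism condition in the original MSP construction, and making this balance fully rigorous on every possible exceptional configuration is where I expect the hardest work to lie.
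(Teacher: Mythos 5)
Your overall architecture (algebraicity, boundedness, separatedness via the valuative criterion) is reasonable, but two of your three pillars have genuine gaps, and your route diverges from the paper's in a way that matters. The paper does not attack $\mathrm{LGQ}^{\Omega}_{g,k}(X,d)$ directly: it first proves everything by hand for the target $\mathbb P^{N-1}$ with $R$-charges $(c_1,\ldots,c_N)$ (Section~\ref{sec:projective-space-case}), and then reduces the general case to it via the morphism $\Phi$ induced by $f=(f_1,\ldots,f_N):V\to\mathbb C^N$ built from the set $S$ (Section~\ref{sec:comparing-LG-to-X-and-to-P-N-1}); separatedness of $\mathrm{LGQ}^{\Omega}_{g,k}(X,d)$ is then deduced from separatedness of the projective-space moduli plus a \emph{local} valuative criterion for $\Phi$, which on an open set where $\omega^{\log}_{\underline{\sC}}$ is trivial converts an LG-quasimap into an honest map to the separated DM stack $X_f=[V_f/G]$ (Lemma~\ref{lem:map-vs-LG}), so uniqueness of extensions is just separatedness of $X_f$. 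Your proposed direct argument --- common modification, exceptional components, ``intersection-theoretic bookkeeping'' producing a numerical contradiction --- is precisely the part you admit is unproved, and as stated it already contains an error: exceptional components of a common modification need not satisfy $\deg(\omega^{\log}_{\sC}|_E)=0$ (rational tails have degree $-1$), so the dichotomy you build on does not hold. Your argument for the DM property is also too quick: a positive (even ample) line bundle preserved by all automorphisms does not force finiteness (consider $\mathcal O(1)$ on $\mathbb P^1$ and $\mathrm{PGL}_2$); the paper's Lemma~\ref{lem:finite-automorphisms-P-N-1-case} needs a genuine case analysis combining $\Omega$-2 and $\Omega$-3, via the auxiliary rational sections $\phi_i=\varphi_i^m(\operatorname{d}\!z/z)^{m(A-c_i)}$ of a single line bundle.

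The most serious gap is in boundedness. You invoke ``standard boundedness results for stable quasimaps, in the spirit of Ciocan-Fontanine--Kim,'' but the paper explicitly reports (after consultation with those authors) that the boundedness proof in \cite{ciocan2014stable} has a gap: even for a single fixed principal bundle, the set of reductions of structure group to a Borel can be unbounded, so the Harder--Narasimhan-type argument does not close. The paper therefore gives a new proof: it bounds the underlying twisted curves via $\Omega$-3 and the isotropy bound of Lemma~\ref{lem:bounding-the-order-of-isotropy}, bounds $\deg_{\xi,\sC'}(\epsilon)$ and $\deg_{\xi,\sC'}(\vartheta)$ componentwise, and then bounds the principal $\Gamma$-bundles directly --- completely in the abelian case (choosing auxiliary characters $\theta_i$ with $V^{\mathrm s}(\theta)\subset V^{\mathrm{ss}}(\theta_i)$ and trapping the multidegree in a bounded polytope), and by deferring a key lemma to \cite{boundedness-paper} in the non-abelian case. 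As written, your finite-type claim rests on a result that is not available, and no amount of bounding vanishing orders via $\Omega$-2 substitutes for bounding the family of underlying $\Gamma$-bundles.
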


\begin{theo}
  \label{thm:intro-properness}
  Suppose $V\git_{\theta}G$ is projective, and suppose
  $S$ is full, then $\mathrm{LGQ}^{\Omega}_{g,k}(X, d)$ is proper.
\end{theo}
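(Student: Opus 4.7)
The plan is to verify the valuative criterion of properness. By Theorem~\ref{thm:intro-separated-Deligne--Mumford-finite-type}, the stack $\mathrm{LGQ}^{\Omega}_{g,k}(X,d)$ is already separated of finite type, so uniqueness of any extension is automatic and only existence needs to be shown. So I fix a DVR $R$ with fraction field $K$ and a $K$-family of $\Omega$-stable LG-quasimaps $\xi_K=(\sC_K,\Si^{\sC}_K,u_K,\kappa_K)$, and aim to construct, after some finite base change $R\to R'$, an extension $\xi$ over $\mathrm{Spec}(R')$ lying in $\mathrm{LGQ}^{\Omega}_{g,k}(X,d)$. The first step is to extend the underlying pointed twisted nodal curve using properness of the moduli of prestable twisted curves. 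The log dualizing bundle $\omega^{\log}_{\sC}$ extends functorially with the curve, so the isomorphism $\kappa_K$ pins down the only possible candidate for $u^*L_\varpi$ in the limit.

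Next I extend the morphism $u_K$. Composing with the affine GIT quotient map $[V^{\mathrm{s}}(\theta)/G]\to V\git_\theta G$, I obtain from $u_K$ a morphism to the projective scheme $V\git_\theta G$ defined on the complement of the base locus. Since $V\git_\theta G$ is projective and $\sC$ is regular in codimension one, this rational map extends to a morphism on an open $U\subset \sC$ whose complement in $\sC$ is a finite set of closed points in the special fiber $\sC_\kappa$. Fullness of $S$ ensures that at any point of $U$ landing in $V^{\mathrm{ss}}(\theta)$ some $u^*f$ with $f\in S$ is nonvanishing; combined with the already-extended line bundle $\omega^{\log}_\sC$, standard torsor arguments lift this map to $V\git_\theta G$ to a representable morphism $U\to [V/\Gamma]$ whose pullback of $L_\varpi$ is canonically identified with $\omega^{\log}_\sC|_U$. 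Thus on $U$ we already have a prestable LG-quasimap extension of $\xi_K$.

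The technical heart is resolving each bad point of $\sC_\kappa\setminus U$ and ensuring the resulting limit is $\Omega$-stable. At each such point I perform successive weighted blowups of the total space of $\sC$, interpreted after further finite base change as inserting chains of twisted rational bubbles into the special fiber; the combinatorial shape of the chain is dictated by the vanishing orders $\mathrm{ord}_x(u^*f)$ for $f\in S$, and the slope bound \eqref{eq:A-greater-than-slope} controls how many bubbles are needed before \textbf{$\Omega$-2} holds at every non-special point. After processing all bad points, I contract any resulting rational components on which the $\mathbb Q$-line bundle in \textbf{$\Omega$-3} fails to be positive; by Corollary~\ref{cor:non-negative-components} such a component must carry a constant regular LG-quasimap and can be absorbed into a neighboring component without violating \textbf{$\Omega$-1} or \textbf{$\Omega$-2}.

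I expect the main obstacle to be the interplay between these blowup and contraction steps: blowups can introduce components on which \textbf{$\Omega$-3} fails, while contractions can reintroduce base points and alter the vanishing orders of the various $u^*f$, potentially threatening \textbf{$\Omega$-2}. To prove termination I would use the fixed $\vartheta$-degree $d$ together with the finite-type conclusion of Theorem~\ref{thm:intro-separated-Deligne--Mumford-finite-type}, giving an a priori bound on the dual graphs that can appear, combined with the inequality \eqref{eq:A-greater-than-slope}, which caps the vanishing orders of the $u^*f$ at any non-special point. This should force the procedure to stabilize after finitely many steps and produce an extension simultaneously satisfying \textbf{$\Omega$-1}, \textbf{$\Omega$-2}, and \textbf{$\Omega$-3}; separatedness from Theorem~\ref{thm:intro-separated-Deligne--Mumford-finite-type} then guarantees this extension is unique up to unique isomorphism.
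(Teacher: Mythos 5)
Your outline assembles the right ingredients (valuative criterion, projectivity of $V\git_{\theta}G$ to prevent escape to infinity, blowups at bad points governed by the orders $\mathrm{ord}_x(u^*f)$, contraction of components violating \textbf{$\Omega$-3}, degree bounds), but it tries to run the whole bubbling argument directly on the target $[V/\Gamma]$, and the two steps you treat as routine are exactly where the content lies. First, the lifting step: you extend the induced map to the projective scheme $V\git_{\theta}G$ and then assert that ``standard torsor arguments'' lift it to a representable morphism to $[V/\Gamma]$ compatible with $\kappa$. For a general reductive $\Gamma$ this is not standard: one must produce the principal $\Gamma$-bundle, the twisted-curve structure at new stacky points, and the section of the associated $V$-bundle with controlled vanishing across the exceptional loci. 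The paper avoids this by a different decomposition: after the reductions of Section~\ref{sec:quick-reduction-general-case} it uses $S=\{f_1,\dots,f_N\}$ to build the morphism $\Phi$ of \eqref{map:f-star} to $\mathrm{LGQ}^{\mathrm{pre}}_{g,k}(\mathbb P^{N-1},d)$, proves properness of $\mathrm{LGQ}^{\Omega}_{g,k}(\mathbb P^{N-1},d)$ by the blowup/contraction argument you describe --- but only in the setting of a line bundle with sections, where extension across blowups is elementary (Lemmas~\ref{lem:extending-L-and-varphi}--\ref{lem:stabilization}) --- and then shows $\mathrm{LGQ}^{\Omega}_{g,k}(X,d)=\Phi^{-1}(\mathrm{LGQ}^{\Omega}_{g,k}(\mathbb P^{N-1},d))$ and that $\Phi$ is proper via a local valuative criterion. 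There the lift to $[V/\Gamma]$ is performed only Zariski-locally where $\omega^{\log}$ is trivial and there are no base points, so that an LG-quasimap is an honest representable map to the separated DM stack $X_f$ (Lemma~\ref{lem:map-vs-LG}) and \cite{abramovich2002compactifying} applies; projectivity and fullness enter precisely through $\underline{X}_f\to\mathbb P^{N-1}$ being a closed embedding.

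Second, the interaction between blowups and contractions, which you correctly identify as the main obstacle, is not resolved by your proposed degree/finite-type count. The paper decouples the two phases: Lemma~\ref{lem:prestable-reduction} first achieves \textbf{$\Omega$-2} on a possibly nonreduced model, with termination controlled by the explicit invariant $\delta(\mathscr D_1,\mathscr C_{\zeta,1})$, and Lemma~\ref{lem:contracting-a-rational-tail} then supplies the exact dichotomy --- contracting a rational tail creates a point violating \textbf{$\Omega$-2} if and only if the tail satisfied \textbf{$\Omega$-3} --- so that contracting only the \textbf{$\Omega$-3}-violating tails and bridges never reintroduces a violation of \textbf{$\Omega$-2} (Lemmas~\ref{lem:persistence-of-base-points} and \ref{lem:stabilization}); without this the procedure could a priori cycle. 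Note also that your appeal to Corollary~\ref{cor:non-negative-components} is misplaced for rational tails: these have $\deg(\omega^{\log}_{\mathscr C}|_{\mathscr E})<0$, the corollary does not apply to them, and their contraction genuinely produces a base point rather than being ``absorbed without violating \textbf{$\Omega$-2}.''
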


As a demonstration, in the remainder of this section we choose appropriate
packages and $\Omega$ to reproduce the stable Mixed-Spin-P(MSP) fields for the
quintic threefold \cite{chang2019mixed},  and deduce the desired geometric properties from the
theorems above.
Other variants like the NMSP fields \cite{chang2018theory} are parallel.
For other complete intersections in weighted projective
spaces, there are obvious generalizations, with some new phenomenon discussed in
Section~\ref{sec:complete-intersection-example}.
However, for complete intersections in toric varieties, there seem to be many
choices for the $R$-charged package, giving
rise to numerous candidates of generalizations of MSP. They all produce
separated moduli spaces with a properly supported virtual cycle.
It will be interesting to know which of them, if
any, will lead to results like the BCOV Feynman sum conjecture. The theorems above
also provide other non-MSP-like new examples of proper moduli spaces with a virtual
cycle, which may be interesting in their own right (e.g.\ Section~\ref{sec:projective-space-example}).

\medskip
To construct MSP for the quintic, we first consider the package $(V_1, G \leq \Gamma, \varpi, \vartheta)$, where
$V_1 = \mathbb C^5 \times \mathbb C \times \mathbb C^2$; $\Gamma = (\mathbb
C^*)^{\times 3}$; $G = (\mathbb C^*)^{\times 2} \leq \Gamma$ consists of the first two factors; $\varpi$ is the
projection onto the last factor; $\vartheta(t_1, t_2, t_3) = t_1t_2^2$ for
$t_1,t_2,t_3 \in \mathbb C^*$; and $\Gamma$ acts by the weight matrix
\[
  \begin{bmatrix}
    1 & 1 &1 &1  & 1 & -5 & 1 & 0\\
    0 & 0 &0 &0 & 0 & 0 &  1 & 1\\
    0 & 0 &0 &0 & 0 & 1 & 0 & 0
  \end{bmatrix}.
\]
Let $x_1 ,\ldots, x_5, p, u, v$ be the standard coordinates on $V_1$. Writing
$\mathbf x = (x_1 ,\ldots, x_5)$, we have
\[
  V_1^{\mathrm{ss}}(\theta) = V_1^{\mathrm{s}}(\theta) = \{(\mathbf x, u) \neq 0 \} \cap \{(u, v) \neq 0 \}
  \cap \{(p, v)\neq 0\}.
\]
Thus the ``target space''
\[
  X_1 = [V_1^{\mathrm{s}}(\theta)/ G],
\]
formed using the first two rows of the matrix above, is a variation of GIT
master space relating $K_{\mathbb P^4}$ and $[\mathbb C^5 / \bm{\mu}_{5}]$
(c.f.\ \cite{thaddeus1996geometric}). And the last row is the so-called
$R$-charge. It accounts for the appearance of the $\omega_{\mathscr C}^{\log}$ below.

For any twisted curve $\mathscr C$, a principal $\Gamma$-bundle $\mathscr P$ on it
induces a triple of line bundles
\[
  (\mathscr L_1, \mathscr L_2, \mathscr L_3), \quad
  \mathscr L_i = \mathscr P \times_{\Gamma} \mathbb C_{\mathrm{pr}_i},
\]
where $\mathbb C_{\mathrm{pr}_i}$ is the copy of $\mathbb C$ on which $\Gamma$ acts
by its projection onto the $i$-th factor.
This is an equivalence between the category of principal
$\Gamma$-bundles on $\mathscr C$ and the category of triples of line
bundles on $\mathscr C$.
Under this correspondence, if $\alpha \in \widehat{\Gamma}$ is given by
\[
  \alpha(t_1, t_2, t_3) = t_1^{a_1}t_2^{a_2}t_3^{a_3}, \quad (t_1, t_2,t_3) \in \Gamma,
\]
and if $\mathscr P$ is the underlying principal
$\Gamma$-bundle of some $u: \mathscr C \to [V_1/ \Gamma]$, then we have a
canonical identification
\[
  u^*L_{\alpha} = \mathscr L_1^{\otimes a_1} \otimes \mathscr L_2^{\otimes a_2}
  \otimes \mathscr L_3^{\otimes a_3}.
\]
In particular, we have $u^*L_{\varpi} = \mathscr L_3$.
Using this, rewriting $\mathscr L = \mathscr L_1$, $\mathscr N =
\mathscr L_2$, a prestable LG-quasimap consists of
\[
  \xi=(\mathscr C,\Sigma^\sC, \mathscr L, \mathscr N, \varphi_1 ,\ldots,
  \varphi_5, \rho, \mu, \nu)
\]
where $\varphi_i \in H^0(\mathscr C, \mathscr L)$, $i=1 ,\ldots, 5$, $\rho \in H^0(\mathscr C,
\mathscr L^{\otimes (-5)} \otimes \omega_{\mathscr C}^{\log})$, $\mu \in
H^0(\mathscr C, \mathscr L \otimes \mathscr N)$, $\nu \in
H^0(\mathscr C, \mathscr N)$, such that
\begin{enumerate}
\item the vector bundle $\mathscr L \oplus \mathscr N$ is representable in the
  sense that the automorphism group of each point of $\mathscr C$ acts
  faithfully on the fiber of $\mathscr L \oplus \mathscr N$;
\item
  the base locus 
  \[
    \{\varphi_1 = \cdots = \varphi_5 = \mu = 0\} \cup \{\mu = \nu = 0 \} \cup
    \{\rho = \nu = 0\}
  \]
  is discrete and away from the special points of $(\mathscr C, \Sigma^{\mathscr
    C})$.
\end{enumerate}
It is easy to see that assuming (2), (1) is equivalent to the condition that
$\mathscr L$ is representable, as in \cite{chang2019mixed}.

We now pick $\Omega = (S, A, \vartheta)$. Let $\vartheta$ be as before,
$\frac{1}{5} < A < \frac{2}{5}$, and let 
\[
  S = \{x_1v^2, \ldots, x_5v^2, uv, u^{10}p\}.
\]
The bi-degrees are, respectively, 
\[
  (1,0) ,\ldots, (1,0), (1,0), (5,1).
\]
Hence \eqref{eq:A-greater-than-slope} is satisfied indeed. 

The $\Omega$-stability reads, in addition to the prestability condition above, that
\begin{enumerate}
\item
  at any smooth point $q \in \mathscr C \setminus \Sigma^{\mathscr C}$, we have
  \begin{align*}
    & \operatorname{ord}_q \varphi +  2 \operatorname{ord}_q \nu\leq  A,\\
    \text{ or }\quad  & \operatorname{ord}_q \mu + \operatorname{ord}_q \nu \leq A,\\
    \text{ or }\quad & \frac{1}{5}\operatorname{ord}_q \rho +
                       2\operatorname{ord}_q \mu \leq A - \frac{1}{5}.
  \end{align*}
\item
  The $\mathbb Q$-line bundle
  \[
    \mathscr L  \otimes  \mathscr N^{\otimes 2} \otimes
    (\omega_{\mathscr C}^{\log})^{\otimes A}
  \]
  is positive.
\end{enumerate}
Since $\frac{1}{5} < A < \frac{2}{5}$ and all the vanishing orders are integers,
(1) is equivalent to that $q$ is not a base point. As is shown in
Appendix~\ref{sec:appendix}, assuming other conditions, (2) is equivalent to
that $\mathrm{Aut}(\xi)$ is finite. Hence we recover the notion of stable MSP
fields in \cite{chang2019mixed}. Then
Theorem~\ref{thm:intro-separated-Deligne--Mumford-finite-type} implies that the
moduli of stable MSP fields of a fixed degree is a separated DM
stack of finite type.

That moduli is in general not proper. In \cite{chang2019mixed}, it is shown that
the cosection degeneracy locus is proper. We define the super-potential to be the
function
\[
  \textstyle
  p(\sum_{i = 1}^5x_i^5): V_1 \longrightarrow \mathbb C,
\]
and denote its critical locus by $V_2$,
namely,
\[
  \textstyle
  V_2 = \{x_1 = \cdots = x_5 = 0\} \cup \{p = \sum_{i=1}^5 x_i^5 = 0\} \subset V_1.
\]
Then the cosection degeneracy locus is equal to
the substack of LG-quasimaps for which $u: \mathscr C \to [V_1/ \Gamma]$ factors
through $[V_2/\Gamma]$. By abuse of notation, we denote the restriction of $S$
to $V_2$ also by $S$. Then $S$ is full and $V_2\git_{\theta} G$ is
projective. Hence, applying Theorem~\ref{thm:intro-properness} to the package
$(V_2, G \leq \Gamma, \varpi, \vartheta)$ with the $\Omega$ above, we recover the
properness of the cosection degeneracy locus in \cite{chang2019mixed}.

\begin{rema}
  It might appear natural to replace $S$ by the set of all homogeneous elements
  in $R_+$, which is essentially the choice in \cite{ciocan2014stable} and
  \cite{fan2017mathematical}. However, if we do that,  the right hand side of
  \eqref{eq:A-greater-than-slope} can be infinity. Indeed, in the 
  example above, the slope of $x_1^{5a+1}p^av^2$ is $a$, for any $a \geq 0$.
\end{rema}

\section{The $\Omega$-stability condition}
\label{sec:moduli-and-stability}
We now study the $\Omega$-stability condition in more details. Following the
notation in the previous section, we will first show that $\Omega$-stability
is an open condition. We then study the change of the stability as we vary
$\Omega = (S, A, \vartheta)$, but keeping $\theta = \vartheta|_{G}$ fixed. We will see that
varying $\vartheta$ and $S$ does not cause essential differences and varying
$A$ results in a family of stability conditions with a wall-and-chamber structure.

\medskip

We first make some simple observations on the ring of invariants.
Let $\mathbb C[V]$ be the ring of regular functions on $V$.
The $\Gamma$ action on $V$ induces an action on $\mathbb C[V]$ by
$\gamma \cdot f = \gamma^*f$, where $\gamma \in \Gamma$ and $f \in \mathbb C[V]$.
Recall that the group of $\theta$-invariants are
\[
  R_{k\theta} = \{f\in \mathbb C[V] \mid g^*f = (\theta(g))^kf,~\forall g\in G
  \}, \quad k \in \mathbb Z_{\geq 0}.
\]
\begin{lemm}
  \label{lem:second-decomposition}
  For any $k\in \mathbb Z_{\geq 0}$, we have $R_{k\theta} = \bigoplus_{c\in \mathbb Z}
  R_{k\vartheta, c\epsilon}$.
\end{lemm}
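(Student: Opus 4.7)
The plan is to recognize $R_{k\theta}$ as a rational $\Gamma$-representation on which, after twisting by $\vartheta^{-k}$, the subgroup $G$ acts trivially; the resulting action of $\Gamma/G \cong \mathbb C^*$ then splits into integer weight eigenspaces, and these eigenspaces are precisely the $R_{k\vartheta,c\varpi}$.

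First I would verify that $R_{k\theta}$ is $\Gamma$-stable, even though it is defined by a $G$-invariance condition. Because $G=\ker(\varpi)$ is normal in $\Gamma$ and $\theta=\vartheta|_G$ extends to the character $\vartheta$ of $\Gamma$, for every $\gamma\in\Gamma$ and $g\in G$ one has $\theta(\gamma g\gamma^{-1})=\vartheta(\gamma g\gamma^{-1})=\vartheta(g)=\theta(g)$, from which a one-line computation shows $\gamma^*f\in R_{k\theta}$ whenever $f\in R_{k\theta}$. The inclusion $\bigoplus_c R_{k\vartheta,c\varpi}\subseteq R_{k\theta}$ is then immediate, since restricting the character $k\vartheta+c\varpi$ to $G=\ker(\varpi)$ yields $k\theta$.

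For the reverse inclusion, define a twisted $\Gamma$-action on $R_{k\theta}$ by $\gamma\bullet f \defeq \vartheta(\gamma)^{-k}\gamma^*f$. For $g\in G$ this gives $\theta(g)^{-k}\theta(g)^k f = f$, so the twisted action factors through $\Gamma/G\cong\mathbb C^*$. The $\Gamma$-action on the coordinate ring $\mathbb C[V]$ of the affine $\Gamma$-scheme $V$ is locally finite and algebraic, hence the induced $\mathbb C^*$-action on $R_{k\theta}$ is rational. Any rational $\mathbb C^*$-module decomposes as a direct sum of integer weight eigenspaces, giving $R_{k\theta}=\bigoplus_c W_c$ with $W_c=\{f : \gamma\bullet f=\varpi(\gamma)^c f\text{ for all }\gamma\in\Gamma\}$. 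Unwinding the twist, $W_c$ is precisely the set of $f\in\mathbb C[V]$ satisfying $\gamma^*f=(k\vartheta+c\varpi)(\gamma)f$, i.e.\ $W_c=R_{k\vartheta,c\varpi}$, which finishes the proof.

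There is no real obstacle beyond bookkeeping. The only point that deserves emphasis is that the normality of $G$, which is automatic since $G=\ker\varpi$, is used twice: once to make $R_{k\theta}$ a $\Gamma$-submodule of $\mathbb C[V]$, and once to let the twisted action descend to $\Gamma/G$. Everything else is a direct application of the standard structure theory of rational $\mathbb C^*$-representations.
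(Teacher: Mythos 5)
Your proposal is correct and follows essentially the same route as the paper: establishing $\Gamma$-invariance of $R_{k\theta}$ via the conjugation computation with $\theta=\vartheta|_G$, twisting the action by $\vartheta^{-k}$ so it descends to $\Gamma/G\cong\mathbb C^*$, and decomposing into weight eigenspaces identified with $R_{k\vartheta,c\varpi}$. The extra remarks on local finiteness and normality of $G$ are fine but add nothing beyond what the paper's argument already implicitly uses.
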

\begin{proof}
  It is similar to \cite[Proposition~3.2.11]{fan2017mathematical}.
We first claim that $R_{k\theta}$ is $\Gamma$-invariant.
Indeed, for $f\in R_{k\theta}$, $\gamma\in \Gamma$ and $g\in G$,
$$
g^*\gamma^*f=\gamma^* (\gamma g\gamma^{-1})^*f = \gamma^* \theta(\gamma
g\gamma^{-1})^kf  = \gamma^* \theta(
g)^kf =\theta(g)^k\gamma^* f,
$$
using $\vartheta(\gamma g\gamma^{-1})^kf =\vartheta(g)^kf=\theta(g)^kf$, the claim follows.

Now the modified $\Gamma$-action on $R_{k\theta}$
  \[
    (\gamma, f) \mapsto \vartheta(\gamma)^{-k}\gamma^*f
  \]
  is well-defined and
  factors through $\epsilon: \Gamma/G \overset{\cong}{\to} \mathbb C^*$. Thus we can further decompose $R_{k\theta}$
  into a direct sum of eigenspaces with respect to this $\mathbb C^*$-action,
  and the weight-$c$ part is precisely $R_{k\vartheta, c\epsilon}$.
\end{proof}

\begin{coro}
  \label{cor:Gamma-invariance-unstable-locus}
  The unstable locus $V^{\mathrm{us}}(\theta)$ is $\Gamma$-invariant.
\end{coro}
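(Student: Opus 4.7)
The plan is to derive this as an immediate consequence of the $\Gamma$-invariance of $R_{k\theta}\subset \mathbb{C}[V]$ that was already established in the proof of Lemma~\ref{lem:second-decomposition}. Recall that, by the construction of the GIT semistable locus, $V^{\mathrm{us}}(\theta)$ is precisely the common vanishing locus of $R_+=\bigoplus_{k>0}R_{k\theta}$. So it suffices to show that this common vanishing locus is preserved by translation by any $\gamma\in\Gamma$.

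First I would fix $\gamma\in\Gamma$, $x\in V^{\mathrm{us}}(\theta)$, and an arbitrary homogeneous $f\in R_{k\theta}$ with $k>0$. Unwinding the convention $(\gamma^* f)(v)=f(\gamma\cdot v)$, the assertion $\gamma\cdot x\in V^{\mathrm{us}}(\theta)$ reduces to showing $(\gamma^* f)(x)=0$. Next I would invoke the claim inside the proof of Lemma~\ref{lem:second-decomposition}: the ordinary action $f\mapsto \gamma^* f$ preserves $R_{k\theta}$, because $G$ is normal in $\Gamma$ (being $\ker\varpi$) and because the character $\vartheta$ satisfies $\vartheta(\gamma g\gamma^{-1})=\vartheta(g)$, so that $g^*(\gamma^*f)=\theta(g)^k(\gamma^* f)$. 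Hence $\gamma^* f\in R_{k\theta}\subset R_+$, and by the hypothesis $x\in V^{\mathrm{us}}(\theta)$, this new invariant also vanishes at $x$. This gives $f(\gamma\cdot x)=0$ for every $f\in R_+$, so $\gamma\cdot x\in V^{\mathrm{us}}(\theta)$, as required.

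There is no real obstacle here; the only thing to be careful about is to phrase the argument using the \emph{unmodified} $\Gamma$-action $f\mapsto \gamma^*f$ on $R_{k\theta}$ (which is what is needed to translate zero loci on $V$), rather than the twisted action $f\mapsto \vartheta(\gamma)^{-k}\gamma^*f$ used in Lemma~\ref{lem:second-decomposition} to produce the bigrading. Both preserve $R_{k\theta}$ as a subspace of $\mathbb{C}[V]$, and it is the former that matters for the present statement.
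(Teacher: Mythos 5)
Your proposal is correct and follows essentially the same route as the paper: the paper's proof likewise cites the $\Gamma$-invariance of $R_{k\theta}$ established in (the proof of) Lemma~\ref{lem:second-decomposition} and notes that $R_+$ cuts out $V^{\mathrm{us}}(\theta)$, so its invariance gives the claim. Your version merely spells out the pointwise translation of zero loci and the (worthwhile) remark that it is the unmodified action $f\mapsto\gamma^*f$, not the twisted one, that is relevant.
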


\begin{proof}
  By Lemma~\ref{lem:second-decomposition}, $R_{k\theta}$ is $\Gamma$-invariant
  for each $k$. Hence $R_{+}$, which generates the ideal of $V^{\mathrm{us}}(\theta)$, is also $\Gamma$-invariant.
\end{proof}

Note that for any LG-quasimap $\xi = (\mathscr C, \Sigma^{\mathscr C}, u,
\kappa)$ and any $f\in R_{k\vartheta,c\epsilon}$,
using the $\kappa$ in $\xi$, we have
\begin{equation}
  \label{finL}
  u\sta f\in H^0(\sC, u\sta L_{k\vartheta + c \epsilon}) \cong
  H^0(\sC, u\sta L_{\vartheta}{}^{\otimes k}\otimes(\omega_\sC^{\log})^{\otimes  c}).
\end{equation}

\subsection{The $\Omega$-stability is open}

Recall that $\Omega=(S,A,\vartheta)$ is introduced in
\eqref{eq:Pi} and $A$ satisfies \eqref{eq:A-greater-than-slope}.
\begin{lemm} \label{lem:openness}
  Being $\Omega$-stable is an open condition.
\end{lemm}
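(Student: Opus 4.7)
The plan is to verify openness of each of the three conditions $\Omega$-1, $\Omega$-2, $\Omega$-3 separately, for a family $\xi = (\mathscr C \to T, \Sigma^{\mathscr C}, u, \kappa)$ of prestable LG-quasimaps over a base $T$.

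For $\Omega$-1, set $B := u^{-1}([V(S)/\Gamma]) \subset \mathscr C$, which is closed. Since $\mathscr C \to T$ is proper, so is the closed substack $B \cap \Sigma^{\mathscr C} \to T$. The condition that $B_t$ be zero-dimensional is open by upper semicontinuity of fiber dimension, while the condition that $B_t \cap \Sigma^{\mathscr C}_t = \emptyset$ is the complement of the closed image $\pi(B \cap \Sigma^{\mathscr C}) \subset T$, hence open.

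For $\Omega$-2, I restrict to the open locus where $\Omega$-1 already holds, over which $B \to T$ is quasi-finite and proper, hence finite. For each $f \in S$ of bi-degree $(k, c)$ with respect to $(\vartheta, \varpi)$, condition \eqref{eq:A-greater-than-slope} guarantees that $N_f := \lfloor kA - c \rfloor + 1 \geq 1$, and one has $\mathrm{ord}_x(u^*f) > kA - c$ if and only if $\mathrm{ord}_x(u^*f) \geq N_f$. Near any non-special point of $\mathscr C$ the morphism $\mathscr C \to T$ is smooth of relative dimension $1$, and the locus $Z_f \subset \mathscr C$ on which $u^*f$ vanishes to order at least $N_f$ in the fiber direction is closed — locally it is cut out by the vanishing of the first $N_f$ Taylor coefficients of $u^*f$ in a trivialization of $u^*L_{k\vartheta + c\varpi}$. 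Hence $Z := \bigcap_{f \in S} Z_f$ is a closed subset of $B$, and since $B \to T$ is finite, its image $\pi(Z) \subset T$ is closed; this image is precisely the failure locus of $\Omega$-2, whose complement is the desired open set.

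For $\Omega$-3, pass to a sufficiently divisible power $m$ so that $\mathscr M := (u^*L_\vartheta)^{\otimes m} \otimes (\omega_{\mathscr C}^{\log})^{\otimes mA}$ is an honest line bundle on $\mathscr C$. The key input is the following feature of flat families of prestable twisted nodal curves: for $t$ in a small enough neighborhood of $t_0 \in T$, every irreducible component of $\mathscr C_t$ specializes to a connected union $\bigcup_i \mathscr C_{t_0, i}$ of irreducible components of $\mathscr C_{t_0}$ joined along nodes that get smoothed, and $\deg(\mathscr M|_{\mathscr C_t\text{-component}}) = \sum_i \deg(\mathscr M|_{\mathscr C_{t_0, i}})$. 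Therefore positivity of $\mathscr M$ on every component of $\mathscr C_{t_0}$ forces positivity on every component of $\mathscr C_t$ for $t$ near $t_0$, yielding openness.

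The main subtlety lies in $\Omega$-3, where one must rigorously invoke the additivity-of-degrees statement under specialization in a family of twisted nodal curves (a standard consequence of flatness together with cohomology and base change, after passing to the relative Picard functor). The arguments for $\Omega$-1 and $\Omega$-2, by contrast, reduce to semicontinuity of fiber dimension together with the well-known closedness of prescribed vanishing-order loci for families of sections of line bundles on smooth curves.
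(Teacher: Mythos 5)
Your proposal is correct, and it handles the two conditions the paper dismisses as ``easy to see'' ($\Omega$-1 and $\Omega$-3) as well as $\Omega$-2, but for $\Omega$-2 it takes a genuinely different route. The paper first observes that the stable locus is constructible, reduces to a discrete valuation ring, and then argues by contradiction: a point $x_\eta$ of the generic fiber violating $\Omega$-2 has $u^*f|_{x_\eta}=0$ for all $f\in S$ (because $\mathrm{slope}(f)<A$), its specialization $x_\zeta$ is non-special by $\Omega$-1 on the closed fiber, and upper semicontinuity of vanishing orders then contradicts $\Omega$-2 at $x_\zeta$. You instead exhibit the failure locus of $\Omega$-2 globally as $\pi(Z)$ with $Z=\bigcap_f Z_f$, where $Z_f$ is the closed locus of relative vanishing order $\ge N_f=\lfloor \mathrm{wt}_\theta(f)A-\mathrm{wt}_R(f)\rfloor+1$; since $Z\subset B$ and $B\to T$ is finite once $\Omega$-1 holds, $\pi(Z)$ is closed. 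Both arguments ultimately rest on the same two facts — semicontinuity of vanishing orders and the inequality \eqref{eq:A-greater-than-slope} forcing failure points into the base locus $B$ — but your version is more explicit (it identifies the non-stable locus as a closed set directly, without the constructibility/DVR reduction), while the paper's is shorter and avoids discussing relative vanishing-order schemes. Your treatment of $\Omega$-3 via additivity of degrees under specialization of components is the standard argument the paper leaves implicit.

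One small point to fix: in your $\Omega$-1 step you only rule out intersection with the markings $\Sigma^{\mathscr C}$, but in this paper a \emph{special point} is a marking \emph{or} a node, so you must also show that $B_t$ avoids the nodes of $\mathscr C_t$. The identical argument works — the relative singular locus of $\mathscr C\to T$ is closed, so the image in $T$ of its intersection with $B$ is closed — but it needs to be said, and it also matters for $\Omega$-2, where you use that $B$ lies in the relative smooth locus in order to make sense of the Taylor expansion defining $Z_f$.
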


\begin{proof}
  It is easy to see that $\Omega$-{1} and $\Omega$-{3} in Definition \ref{stability} are open conditions.
  We argue that assuming $\Omega$-{1} and $\Omega$-{3}, $\Omega$-{2} is also open.

  Let $T$ be a scheme and $\xi=(\sC,\Sigma, u,\kappa)$ be a (flat) $T$-family of prestable LG-quasimaps
  satisfying $\Omega$-1 and $\Omega$-3.
  Then the locus $x\in T$ where $\xi_T|_x$ is stable is constructible. To show it is open, it suffices
  to consider the case where $T = \operatorname{Spec} B$ for $B$ a discrete valuation ring.

  Let $\eta$ and $\zeta$ be the
  generic and closed points of $T$, respectively,
  and let $\xi$ be a prestable LG-quasimaps over $T$. Suppose $\Omega$-1 and
  $\Omega$-3 hold for $\xi$, and
  $\Omega$-{2} holds for its special fiber $\xi_\zeta$.
  We claim $\Omega$-{2} also holds for $\xi_\eta$.

  Suppose not, we can find a closed point $x_\eta$ of $\sC_\eta$
  such that for any $f\in S$ we have
  \begin{equation}
    \label{AA}
    \frac{1}{\mathrm{wt}_{\theta}(f)} \mathrm{ord}_{x_\eta}(u_\eta^*f) +
    \mathrm{slope}(f) > A.
  \end{equation}
  Since $\mathrm{slope}(f) < A$, we have $\mathrm{ord}_{x_\eta}(u_\eta^*f)> 0$,
  so $u^*f|_{x_{\eta}} = 0$ for any $f\in S$.
  Let $x_\zeta$ be a specialization of $x_\eta$.
  Then
  $u^*f|_{x_{\zeta}} = 0$ for any $f\in S$.
  As $\xi_\zeta$ is stable, $x_\zeta$ is non-special
  by $\Omega$-{1}.
  Then by $\Omega$-{2} for $\xi_\zeta$,   there exists some $f\in S$ such that
  \[
    \frac{1}{\mathrm{wt}_{\theta}(f)} \mathrm{ord}_{x_\zeta}(u_\zeta^*f) +
    \mathrm{slope}(f) \leq A.
  \]
  This contradicts to \eqref{AA} and the upper semicontinuity of vanishing orders.
  This proves the lemma.
\end{proof}

\subsection{The dependence on $\Omega$}
In general, different choices of $\Omega$ result in very different moduli spaces.
In simple cases like complete intersections in weighted projective spaces, there
are almost canonical choices of $\Omega$.
However, for more general packages, it is
less obvious which is the best choice. Hence in the remainder of
Section~\ref{sec:moduli-and-stability}, we explore all choices of
$\Omega$ subject to the condition \eqref{eq:A-greater-than-slope} and study the
dependence of the stability on $\Omega$, when $X$ is kept unchanged.
How the theory changes when $X$ changes by variation of GIT is a much harder
problem, which is beyond the scope of this paper.
Hence, in the discussion below, we always fix
\[
  (V, G \leq \Gamma, \epsilon) \quad \text{and} \quad \theta = \vartheta|_{G}.
\]

In summary,  choosing a different $\vartheta$  gives essentially the same set of
stability conditions (Lemma~\ref{lem:dependence-on-C-and-vartheta}). Varying $A$
gives a family of stability conditions with a
wall-and-chamber structure (Corollary~\ref{interval}), similar to the case of
$\epsilon$-stable quasimaps \cite{ciocan2014stable}. We also describe the
stability as $A \to \infty$ in
Corollary~\ref{cor:stability-condition-A-infinity}. It recovers the $\epsilon =
0^+$ stability in
\cite{fan2017mathematical}, which is the only stability condition in
\cite{fan2017mathematical} when there is no so-called ``good lift''.

When $V\git_{\theta} G$ is projective, the stability is
essentially independent of $S$. More precisely, assuming $S$ is full (c.f.\
\eqref{eq:being-full}) in the sense that it contains enough
elements to cut out the unstable locus $V^{\mathrm{us}}(\theta)$
set-theoretically, then by Corollary~\ref{cor:projective-case-independent-of-S},
\begin{enumerate}
\item
  for fixed $A$, the $\Omega$-stability is independent of the choice of $S$;
\item
  if in addition $V\git_{\theta} G$ is reduced, the right hand side of
  \eqref{eq:A-greater-than-slope} is independent of $S$,
  so that we have the same set of allowed $A$.
\end{enumerate}

The dependence on $S$ is more complicated when $V\git_{\theta} G$ is not
projective. However, in most applications, we only care about the stability for
projective and reduced $V\git_{\theta} G$.
Indeed, to define Gromov-Witten type invariants,
either the original GIT quotient is projective or we take $V \git_{\theta} G$ to
be the critical locus of some function, which is assumed
to be projective.
For example, in the case of MSP fields for
the quintic discussed in Section~\ref{sec:brief}, what affects the MSP invariants
is only the stability condition in a neighborhood of the cosection degeneracy locus.
Since the stability is an open condition, all that matters is the stability
for MSP fields in the degeneracy locus.

\medskip
We now begin to prove the results stated above.
It is easy to see how the $\Omega$-stability depends on the choice of $\vartheta$
when $\theta=\vartheta|_G$ is fixed.
In this case two different choices of $\vartheta$ differ by a multiple of
$\epsilon$, and the moduli spaces are related as follows.

\begin{lemm} \label{lem:dependence-on-C-and-vartheta}
  For any $a\in \mathbb Z$, let
  \[
    \Omega=(S,A,\vartheta) \quad \text{and} \quad  \Omega'=(S,A+a,\vartheta- a\epsilon).
  \]
  Then the identity map induces a canonical isomorphism of stacks
  \[
    \mathrm{LGQ}^{\Omega}_{g,k}(X, d) = \mathrm{LGQ}^{\Omega'}_{g,k}(X, d - a(2g-2+k)).
  \]
\end{lemm}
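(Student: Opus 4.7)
The plan is to verify that the identity on the underlying data of an LG-quasimap intertwines $\Omega$-stability with $\Omega'$-stability, and to track how the $\vartheta$-degree changes. The first observation is that the notion of (pre)stable LG-quasimap in Definition~\ref{def:prestable-LG-quasimap} depends only on the underlying data $(V, G \leq \Gamma, \varpi)$ and the GIT chamber $\theta = \vartheta|_G$: the object $\xi = (\sC, \Sigma^\sC, u, \kappa)$ involves $u: \sC \to [V/\Gamma]$ and the isomorphism $\kappa: u^*L_\varpi \cong \omega^{\log}_\sC$, neither of which sees $\vartheta$ itself. So both $\mathrm{LGQ}^\Omega_{g,k}(X,d)$ and $\mathrm{LGQ}^{\Omega'}_{g,k}(X,d')$ live naturally inside the same $\mathrm{LGQ}^{\mathrm{pre}}_{g,k}(X)$, and the task is to match the stability conditions and the numerical invariant $d$.

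Next I would compare the bi-gradings. Since $\epsilon$ restricts trivially to $G$, a nonzero $f \in R_{k\vartheta, c\varpi} \subset R_{k\theta}$ also lies in $R_{k(\vartheta - a\epsilon), (c+ka)\varpi}$ for the decomposition associated to $\vartheta' = \vartheta - a\epsilon$. Hence $\mathrm{wt}_\theta(f)$ is unchanged while
\[
  \mathrm{slope}_{\vartheta'}(f) = \frac{c+ka}{k} = \mathrm{slope}_\vartheta(f) + a.
\]
In particular the inequality \eqref{eq:A-greater-than-slope} for $(S, A, \vartheta)$ is equivalent to the same inequality for $(S, A+a, \vartheta - a\epsilon)$, so the triple $\Omega'$ is well-defined exactly when $\Omega$ is. The uniform shift by $a$ in the slope then makes $\Omega$-$2$ manifestly equivalent for the two triples, because both the left- and right-hand sides of the defining inequality increase by $a$; condition $\Omega$-$1$ depends only on $S$ and $u$, so it is identical for $\Omega$ and $\Omega'$.

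The step that actually uses $\kappa$ is $\Omega$-$3$. Using the identification $\kappa: u^*L_\varpi \cong \omega^{\log}_\sC$ we get a canonical isomorphism of $\mathbb Q$-line bundles
\[
  u^*L_{\vartheta - a\epsilon} \otimes (\omega^{\log}_\sC)^{\otimes (A+a)}
  \cong u^*L_\vartheta \otimes (u^*L_\varpi)^{\otimes (-a)} \otimes (\omega^{\log}_\sC)^{\otimes (A+a)}
  \cong u^*L_\vartheta \otimes (\omega^{\log}_\sC)^{\otimes A},
\]
so positivity is equivalent on the two sides and $\Omega$-$3$ is preserved. I view this as the main (though still elementary) point, since it is exactly where the $R$-charge isomorphism $\kappa$ intervenes to turn a twist of $L_\varpi$ into a twist of $\omega^{\log}_\sC$.

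Finally, to handle the degree shift I would just compute $\deg_\sC(u^*L_{\vartheta - a\epsilon}) = \deg_\sC(u^*L_\vartheta) - a\deg_\sC(u^*L_\varpi) = d - a\deg_\sC(\omega^{\log}_\sC) = d - a(2g-2+k)$, using $\kappa$ once more. Combining these steps gives the claimed canonical isomorphism of stacks, with the identity on the underlying LG-quasimap as the inverse of itself. No moduli-theoretic subtlety arises because $u$ and $\kappa$ are preserved and all comparisons are pointwise, so the isomorphism works in families without further argument.
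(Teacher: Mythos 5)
Your proof is correct and is precisely the straightforward verification that the paper omits (its proof reads only ``This is straightforward and will be omitted''): you correctly track the regrading $R_{k\vartheta,c\varpi}=R_{k(\vartheta-a\epsilon),(c+ka)\varpi}$, the resulting uniform shift of slopes by $a$, the use of $\kappa$ to convert the $L_{\varpi}$-twist in $\Omega$-3 into a twist by $\omega^{\log}_{\sC}$, and the degree shift $d\mapsto d-a(2g-2+k)$. Nothing is missing.
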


\begin{proof} This is straightforward and will be omitted.
  % Write $k \vartheta + c\epsilon = k(\vartheta - a\epsilon) + (c + ak)\epsilon$. Hence
  % for any $f\in S$, $f$ has bidegree $(k, c)$ with respect to $\vartheta$ if and
  % only if it has bidegree $(k, c + ak)$ with respect to $\vartheta - a\epsilon$.
  % Moreover, it is easy to see that $u^*L_{\vartheta} \cong u^*L_{\vartheta - a
  % \epsilon} \otimes \omega_{\mathscr C}^{\log, \otimes a}$. The Lemma then
  % follows immediately from the definition.
\end{proof}

We next investigate how varying $A$ affects the $\Omega$-stability condition.
Consider any prestable LG-quasimap
\[
  \xi=(\mathscr C,\Sigma^\sC, u, \kappa) \in \mathrm{LGQ}^{\mathrm{pre}}_{g,k}(X,
  d)(\mathbb C). 
\]
Let $\Irre(\xi)$ be the set of irreducible components of $\sC$.
And let $\Delta_{+}$ (resp.\ $\Delta_-$, resp.\
$\Delta_0$) denote the subset of $\Irre(\xi)$ consisting of $\sB$ such that
$\deg(\omega^{\log}_{\mathscr C}|_{\sB})
> 0$, (resp.\ $<0$, resp.\ $=0$).
% We suppress $\xi$ from the notation when it is clear from the context.
Set
\[
  s_{\mathrm{max}} = \max_{f\in S}\{\mathrm{slope}(f)\}
\]
and recall that we always require $A > s_{\mathrm{max}}$.

For $z\in V$ we let $\text{Stab}_G(z)$ be the $G$-stabilizer of $z$.
As $G$ is of finite type and $V^{\mathrm{ss}}(\theta) = V^{\mathrm{s}}(\theta)$,
there exists an $N_0$ such that
\[
  \# (\text{Stab}_G(z)) \leq N_0, \quad \forall z \in V^{\mathrm{ss}}(\theta).
\]
\begin{lemm} \label{lem:bounding-the-order-of-isotropy}
 For any orbifold point $x$ of $\mathscr C$, 
 $\#(\mathrm{Aut}(x)) \leq N_0$.
\end{lemm}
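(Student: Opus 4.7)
The plan is to combine representability of $u$, the isomorphism $\kappa$, and the balanced condition on $\mathscr C$ to embed $\mathrm{Aut}(x)$ into $\mathrm{Stab}_G(z)$ for some $z\in V^{\mathrm{ss}}(\theta)$, and then invoke the universal bound $N_0$.

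First, I would observe that in a balanced twisted nodal curve, the only points with nontrivial isotropy are the special points (markings and nodes). Hence any orbifold point $x$ is a special point of $(\mathscr C,\Sigma^\sC)$. By the prestability of $\xi$ (part~(2) of Definition~\ref{def:prestable-LG-quasimap}), the base locus $u^{-1}([V^{\mathrm{us}}(\theta)/\Gamma])$ is disjoint from the special points, so $u(x) \in [V^{\mathrm{ss}}(\theta)/\Gamma]$. Pick any lift $z\in V^{\mathrm{ss}}(\theta)$ of $u(x)$; then $\mathrm{Aut}_{[V/\Gamma]}(u(x)) = \mathrm{Stab}_\Gamma(z)$, and since $u$ is representable by assumption, the induced map on isotropy gives an injection $\mathrm{Aut}(x) \hookrightarrow \mathrm{Stab}_\Gamma(z)$.

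Next, I would exploit $\kappa$. The balanced condition implies that the canonical cyclic isotropy at a marking or at a node acts trivially on $\omega^{\log}_\sC|_x$: near a stacky marking, $\omega^{\log}_\sC$ is locally generated by $dz/z$, fixed by $z\mapsto \zeta z$; near a balanced node $\{xy=0\}/\mu_r$ with $x\mapsto \zeta x, y\mapsto \zeta^{-1}y$, the log dualizing sheaf is generated by $dx/x=-dy/y$, again $\mu_r$-invariant. Transporting this triviality through the isomorphism $\kappa: u^*L_\varpi \xrightarrow{\sim} \omega^{\log}_\sC$, the group $\mathrm{Aut}(x)$ acts trivially on $u^*L_\varpi|_x$. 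But the action of $\mathrm{Stab}_\Gamma(z)$ on the fiber of $L_\varpi$ at $z$ is given by the character $\varpi$, so $\mathrm{Aut}(x)$ lands in $\mathrm{Stab}_\Gamma(z)\cap\ker(\varpi)=\mathrm{Stab}_\Gamma(z)\cap G=\mathrm{Stab}_G(z)$.

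Finally, since $z\in V^{\mathrm{ss}}(\theta)$, the choice of $N_0$ gives $\#\mathrm{Stab}_G(z)\leq N_0$, hence $\#\mathrm{Aut}(x)\leq N_0$, as desired. The only delicate ingredient is the triviality of the isotropy action on $\omega^{\log}_\sC$ at special points, which is precisely where the balanced hypothesis on twisted nodal curves is used; everything else is formal once one has representability of $u$ and the trivialization $\kappa$.
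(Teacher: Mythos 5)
Your proof is correct and follows the same route as the paper: representability of $u$ injects $\mathrm{Aut}(x)$ into the isotropy of $u(x)$, the trivial action on $\omega^{\log}_{\mathscr C}|_x$ transported through $\kappa$ forces the image into $\ker(\varpi)=G$, hence into $\mathrm{Stab}_G(z)$ for a semistable lift $z$, and the uniform bound $N_0$ applies. You simply spell out the local computations (orbifold points being special, the balanced isotropy acting trivially on $dz/z$ and $dx/x$) that the paper leaves implicit.
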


\begin{proof}
  Because the $u:\sC\to [V/\Gamma]$ of $\xi$ is representable, $\mathrm{Aut}(x)$ injects into
  $\mathrm{Aut}(u(x))$.
  Because $\Aut(x)$ acts on $\omega_{\mathscr C}^{\log}|_x$ trivially,
  using  the isomorphism $\kappa$, we see that $\mathrm{Aut}(x)$
  injects into $\text{Stab}_G(z)$ where $z\in V$ is any point lying over $u(x)$.
  By the (pre)stability condition, $z\in V^{\mathrm{ss}}(\theta)$. The
  lemma then follows.
\end{proof}

\begin{coro} \label{cor:deg-of-L-takes-discrete-values}
  There exists $D\in \ZZ_{>0}$ such that for any prestable LG-quasimap $\xi$ and any
  $\sB\in\Irre(\xi)$,
  $ \deg(u^* L_{\vartheta}|_{\sB}) \in \frac{1}{D} \mathbb Z$.
\end{coro}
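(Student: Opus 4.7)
The plan is to extract the uniform denominator $D$ directly from the bound on orbifold isotropy supplied by Lemma~\ref{lem:bounding-the-order-of-isotropy}, using the standard fact that the degree of a line bundle on a smooth twisted curve is controlled by the orders of the local automorphism groups.

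First I would reduce to a single irreducible component $\sB\in\Irre(\xi)$. Since $\sC$ is a balanced twisted nodal curve, $\sB$ is a one-dimensional smooth Deligne--Mumford stack whose coarse space $|\sB|$ is a smooth projective curve; the non-trivial automorphism groups of $\sB$ are concentrated at finitely many orbifold points $x_1,\ldots,x_n \in \sB$ (the markings and nodes of $\sC$ lying on $\sB$), with cyclic stabilizers of orders $r_1,\ldots,r_n$ respectively. This is a structural fact about twisted curves in the sense of Abramovich--Vistoli used in the paper's convention.

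Next I would invoke the standard degree-denominator fact: for any line bundle $\sM$ on $\sB$, one has
\[
\deg(\sM) \in \tfrac{1}{\mathrm{lcm}(r_1,\ldots,r_n)}\,\ZZ.
\]
This follows, for instance, by pulling back to a global étale cover that trivializes all the isotropies (or equivalently by computing $\deg(\sM)$ as $\deg(\pi_*\sM^{\otimes N})/N$ on $|\sB|$ for a suitable $N$ divisible by all $r_i$, where $\pi\colon \sB \to |\sB|$ is the coarse moduli map).

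Finally, Lemma~\ref{lem:bounding-the-order-of-isotropy} guarantees $r_i \le N_0$ for every orbifold point appearing on any prestable LG-quasimap. Setting
\[
D \defeq \mathrm{lcm}(1,2,\ldots,N_0),
\]
we have $r_i \mid D$ for all possible $r_i$, hence $\mathrm{lcm}(r_1,\ldots,r_n)\mid D$. Applying the denominator fact to $\sM = u^*L_\vartheta|_\sB$ yields $\deg(u^*L_\vartheta|_\sB)\in \tfrac{1}{D}\ZZ$, completing the proof.

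The only non-routine ingredient is the denominator bound for degrees on a twisted curve component, which however is entirely standard; no serious obstacle is anticipated, and the argument is uniform in $\xi$ precisely because $N_0$ in Lemma~\ref{lem:bounding-the-order-of-isotropy} is uniform.
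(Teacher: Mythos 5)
Your proof is correct and is precisely the ``straightforward'' argument the paper omits: the corollary is placed immediately after Lemma~\ref{lem:bounding-the-order-of-isotropy} exactly so that the uniform bound $N_0$ on orbifold isotropy orders can be combined with the standard fact that $\deg$ of a line bundle on a twisted curve component lies in $\frac{1}{\mathrm{lcm}(r_i)}\ZZ$. Taking $D=\mathrm{lcm}(1,\ldots,N_0)$ as you do gives the uniform denominator, and no further ideas are needed.
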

\begin{proof}
  The proof is straightforward, and will be omitted.
\end{proof}

\begin{lemm}
  \label{lem:lower-bound-on-degree-of-line-bundles}
  Suppose $\xi$ satisfies $\Omega$-2.
\begin{enumerate}[(i)]
\item For any $\sB \in \Delta_{+} \cup \Delta_0$,
  $\deg (u^*L_{\vartheta}|_{\mathscr B}) + s_{\mathrm{max}} \deg
  (\omega_{\mathscr C}^{\log}|_{\mathscr B}) \geq 0$.
\item There exists $\delta > 0$, independent of $\xi$ and $A$, such that for any $\sB \in \Delta_{-}$,
  $ \deg (u^*L_{\vartheta}|_{\sB}) + (s_{\mathrm{max}} + \delta)
  \deg (\omega_{\mathscr C}^{\log}|_{\sB}) \geq 0$.
\end{enumerate}
\end{lemm}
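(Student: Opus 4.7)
The plan rests on a uniform estimate obtained by pulling back elements of $S$. For any $f\in S\cap R_{k\vartheta,c\varpi}$ that does not vanish identically on $\sB$, the pullback $u^*f|_\sB$ is a nonzero global section of $(u^*L_\vartheta)^{\otimes k}\otimes(\omega_\sC^{\log})^{\otimes c}|_\sB$ by~\eqref{finL}, so it has nonnegative degree; rewritten, this is
\[
\deg(u^*L_\vartheta|_\sB)+\mathrm{slope}(f)\deg(\omega_\sC^{\log}|_\sB)\geq 0.
\]
Such an $f$ always exists: $\Omega$-2 applied at any non-special $x\in\sB$ produces $f\in S$ with $\mathrm{ord}_x(u^*f)\leq k(A-\mathrm{slope}(f))<\infty$, which is therefore not identically zero on the irreducible component containing $x$.

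For part (i), the assumption $\sB\in\Delta_+\cup\Delta_0$ gives $\deg(\omega_\sC^{\log}|_\sB)\geq 0$. Combined with $\mathrm{slope}(f)\leq s_{\max}$, this forces $\mathrm{slope}(f)\deg(\omega_\sC^{\log}|_\sB)\leq s_{\max}\deg(\omega_\sC^{\log}|_\sB)$, and (i) drops out immediately from the basic estimate.

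For part (ii), components in $\Delta_-$ are rigid: $\deg(\omega_\sC^{\log}|_\sB)<0$ forces $g_\sB=0$ with at most one special point, whose orbifold order is bounded by $N_0$ via Lemma~\ref{lem:bounding-the-order-of-isotropy}; hence $\deg(\omega_\sC^{\log}|_\sB)$ takes values in a finite set of negative rationals. By Corollary~\ref{cor:deg-of-L-takes-discrete-values}, $\deg(u^*L_\vartheta|_\sB)$ lies in $\tfrac{1}{D}\mathbb Z$, so both degrees are uniformly quantized. The plan is to first establish $\deg(u^*L_\vartheta|_\sB)+s_{\max}\deg(\omega_\sC^{\log}|_\sB)\geq 0$ by choosing $f\in S$ with $\mathrm{slope}(f)=s_{\max}$ that does not vanish identically on $\sB$, then upgrade the nonstrict bound to a uniform positive gap $\delta_0$ via joint discreteness of the allowed values. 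Rescaling with $\delta\leq\delta_0/\max_{\sB\in\Delta_-}|\deg(\omega_\sC^{\log}|_\sB)|$ converts this gap into the required inequality, and because every ingredient---$N_0$, $D$, $s_{\max}$, and the finite list of $\deg(\omega_\sC^{\log}|_\sB)$---depends only on the package $(V,G,\Gamma,\vartheta,S)$, the resulting $\delta$ is independent of $\xi$ and $A$.

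The main obstacle will be the boundary case $\deg(u^*L_\vartheta|_\sB)+s_{\max}\deg(\omega_\sC^{\log}|_\sB)=0$, which occurs precisely when $u^*f|_\sB$ is nowhere vanishing for some $f\in S$ of slope $s_{\max}$; then $u(\sB)$ factors through the open locus $\{f\neq 0\}/\Gamma$. To close the argument I would exploit the rigid combinatorial structure of $\sB\in\Delta_-$ together with $\Omega$-2 applied at interior points, either ruling out this configuration outright or extracting an extra positive contribution from the forced discreteness of $\deg(u^*L_\vartheta|_\sB)$.
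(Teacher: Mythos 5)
Your part (i) is correct and is essentially the contrapositive of the paper's argument: a homogeneous $f\in S\cap R_{k\vartheta,c\varpi}$ with finite vanishing order at some point of $\sB$ gives a nonzero section of $u^*L_{k\vartheta+c\varpi}|_{\sB}$, hence $\deg(u^*L_{\vartheta}|_{\sB})+\mathrm{slope}(f)\deg(\omega_{\sC}^{\log}|_{\sB})\ge 0$, and since $\deg(\omega_{\sC}^{\log}|_{\sB})\ge 0$ one may replace $\mathrm{slope}(f)$ by $s_{\max}$. No issues there.

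Part (ii) has a genuine gap, and it sits exactly where you flag your ``main obstacle.'' Your plan hinges on producing an $f\in S$ with $\mathrm{slope}(f)=s_{\max}$ that is not identically zero on $\sB$, but $\Omega$-2 only guarantees that \emph{some} $f\in S$ survives on $\sB$, and that $f$ may well have slope strictly less than $s_{\max}$. Because $\deg(\omega_{\sC}^{\log}|_{\sB})<0$ on $\Delta_-$, the inequality $\deg(u^*L_{\vartheta}|_{\sB})+\mathrm{slope}(f)\deg(\omega_{\sC}^{\log}|_{\sB})\ge 0$ with a smaller slope is \emph{weaker} than the one with $s_{\max}$ (the sign flips the monotonicity you used in part (i)), so the non-strict bound you want to ``upgrade'' is never established. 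In fact the statement of (ii) cannot be proved from $\Omega$-2 alone: e.g.\ in the MSP example a rational tail on which a slope-$0$ element of $S$ is nowhere vanishing but $\deg(u^*L_{\vartheta}|_{\sB})=0$ satisfies $\Omega$-2 yet violates (ii). The missing ingredient is $\Omega$-3: the paper's proof applies the positivity $\deg(u^*L_{\vartheta}|_{\sB})+A\deg(\omega_{\sC}^{\log}|_{\sB})>0$ and the inequalities $A>s_{\max}$, $\deg(\omega_{\sC}^{\log}|_{\sB})<0$ to get \emph{strict} positivity of $\deg(u^*L_{\vartheta}|_{\sB})+s_{\max}\deg(\omega_{\sC}^{\log}|_{\sB})$ directly; there is then no boundary case, and Corollary~\ref{cor:deg-of-L-takes-discrete-values} together with $\deg(\omega_{\sC}^{\log}|_{\sB})\in\{-1,-2\}$ yields the uniform $\delta$, exactly as in the second half of your plan. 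So your discreteness endgame is the right one, but the opening move must come from $\Omega$-3, not from $\Omega$-2.
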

\begin{proof}
  First suppose $\sB \in \Delta_{+} \cup \Delta_0$ and
  \begin{equation}
    \label{eq:deg-s-max}
    \deg (u^*L_{\vartheta}|_{\sB}) + s_{\mathrm{max}} \deg (\omega_{\mathscr C}^{\log}|_{ \sB})
  \end{equation}
  is negative.
  Then for any $f\in S \cap R_{k\vartheta,c\epsilon}$, we have
  $s_{\mathrm{max}} \geq \frac{c}{k}$.  Thus
  \[
    \deg (u^*L_{k\vartheta+c\epsilon}|_{\sB}) =
    k \big( \deg (u^*L_{\vartheta}|_{\sB}) + \frac{c}{k}\deg (\omega_{\mathscr C}^{\log}|_{ \sB})\big)<0.
  \]
  Since $u\sta f|_\sB$ is a section of  $u^*L_{k\vartheta + c\epsilon}|_{\sB}$,
  $u\sta f|_\sB=0$.
  This is true for each $f\in S$,
  violating  $\Omega$-2 since the vanishing
  orders are infinity. This proves (i).

  For (ii), by the $\Omega$-stability, for any $\sB\in \Delta_{-}$,
  \[
    \deg (u^*L_{\vartheta}|_{\sB}) + A \deg (\omega_{\sC}^{\log}|_{\sB}) > 0.
  \]
  Using $A
  > s_{\max}$ and $\deg (\omega_{\mathscr C}^{\log}|_{\sB}) < 0$,
  we see
  \eqref{eq:deg-s-max} is positive.
  Applying Corollary~\ref{cor:deg-of-L-takes-discrete-values}  we see that
  \eqref{eq:deg-s-max} has bounded
  denominator.
  Since $\deg (\omega_{\mathscr C}^{\log}|_{\sB}) = -1$ or $-2$, this proves (ii).
\end{proof}
We use the above lemma to simplify the $\Omega$-stability for $\mathscr B \in
\mathrm{Irre}(\xi)$ with $\deg(\omega^{\log}_{\mathscr C}|_{\mathscr B}) \geq 0$.
Note that when $\deg(\omega^{\log}_{\mathscr C}|_{\mathscr B}) = 0$, we have
$\omega^{\log}_{\mathscr C}|_{\mathscr B} \cong \mathscr O_{\mathscr
  B}$. Picking any trivialization of it, the LG-quasimap induces a quasimap
$\mathscr B 
\to  [V/G]$ (c.f.\ Lemma~\ref{lem:map-vs-LG}).

\begin{coro}
  \label{cor:non-negative-components}
  Suppose $\xi$ satisfies $\Omega$-2.
  For any $\mathscr B \in \mathrm{Irre}(\xi)$,
  \begin{enumerate}
  \item
    if $\deg(\omega^{\log}_{\mathscr C}|_{\mathscr B}) > 0$, then the
    line bundle in $\Omega$-3 is automatically positive on $\mathscr B$;
  \item
    if $\deg(\omega^{\log}_{\mathscr C}|_{\mathscr B}) = 0$, then the
    line bundle in $\Omega$-3 is positive on $\mathscr B$ if and only if the
    induced quasimap $\mathscr B \to [V/G]$ is a constant regular map, i.e.,\ it
    maps $\mathscr B$ into $X$ and the induced map $\mathscr B \to
    V\git_{\theta}G$ is constant.
  \end{enumerate}
\end{coro}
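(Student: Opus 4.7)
The plan is to dispose of (1) by a one-line estimate and then reduce (2) to computing when a certain degree vanishes. For (1), combining Lemma~\ref{lem:lower-bound-on-degree-of-line-bundles}(i) with $A>s_{\max}$ and $\deg(\omega_{\mathscr C}^{\log}|_{\mathscr B})>0$ gives
\[
\deg(u^{*}L_\vartheta|_{\mathscr B})+A\deg(\omega_{\mathscr C}^{\log}|_{\mathscr B})\ge (A-s_{\max})\deg(\omega_{\mathscr C}^{\log}|_{\mathscr B})>0,
\]
which is exactly the positivity required on $\mathscr B$.

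For (2), set $d:=\deg(u^{*}L_\vartheta|_{\mathscr B})$; by Lemma~\ref{lem:lower-bound-on-degree-of-line-bundles}(i) we have $d\ge 0$, so the $\Omega$-$3$ bundle, being of degree $d$ on $\mathscr B$, is positive on $\mathscr B$ iff $d>0$. After trivializing $\omega_{\mathscr C}^{\log}|_{\mathscr B}$ and invoking $\kappa$, Lemma~\ref{lem:map-vs-LG} yields an induced quasimap $v\colon\mathscr B\to[V/G]$ lifting $u|_{\mathscr B}$, and because $\vartheta|_G=\theta$ the pulled-back line bundle $v^{*}L_\theta$ agrees with $u^{*}L_\vartheta|_{\mathscr B}$, so also has degree $d$. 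It therefore suffices to show the equivalence $d=0\Leftrightarrow v$ is a constant regular map. The easy direction: if $v$ is constant regular with image $[z]\in V\git_\theta G$, it factors through the residue gerbe $B\,\mathrm{Stab}_G(z)$, whose isotropy is finite by Lemma~\ref{lem:bounding-the-order-of-isotropy}, so $v^{*}L_\theta$ is pulled back from a character of a finite group, hence torsion, forcing $d=0$.

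The converse is where the real work lies. Assume $d=0$. Then for every homogeneous $h\in R_{k\theta}$, the section $v^{*}h|_{\mathscr B}$ of the degree-zero line bundle $(v^{*}L_\theta)^{\otimes k}$ is either identically zero or nowhere vanishing on $\mathscr B$. I would first rule out base points of $v$ on $\mathscr B$: at a hypothetical base point $x$ one has $v(x)\in[V^{\mathrm{us}}(\theta)/G]$, so every $v^{*}h$ with $h\in R_{+}$ vanishes at $x$; the dichotomy then forces each such $v^{*}h$ to vanish identically on $\mathscr B$, placing $\mathscr B\subseteq u^{-1}([V^{\mathrm{us}}(\theta)/\Gamma])$ and violating prestability. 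Hence $v$ maps $\mathscr B$ into $X$. Next, fix a finite set of homogeneous generators of $R$ realizing a projective embedding of $V\git_\theta G=\mathrm{Proj}(R)$; the nonzero $v^{*}h$ among these generators are nowhere-vanishing sections of trivial line bundles, so their pairwise ratios are scalars and the composition $\mathscr B\to X\to V\git_\theta G$ is constant, showing $v$ is constant regular.

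The main obstacle is this converse in (2). The ``zero or nowhere vanishing'' dichotomy falls out cheaply from $d=0$, but it must be applied twice in sequence: first combined with prestability to promote $v$ from a quasimap to a genuine morphism into $X$, and then combined with a projective embedding of $V\git_\theta G$ to upgrade the scalar-ratio statement to genuine constancy of the coarse-moduli map. Care is needed to choose a generating set for $R$ large enough to separate points of $V\git_\theta G$, and to verify that the argument is insensitive to the trivialization of $\omega_{\mathscr C}^{\log}|_{\mathscr B}$ used to define $v$.
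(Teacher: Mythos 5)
Your part (1) is precisely the paper's argument: Lemma~\ref{lem:lower-bound-on-degree-of-line-bundles}(i) together with $A>s_{\max}$. For part (2) the paper offers no argument at all --- it simply points to \cite[Lemma~2.3]{cheong2015orbifold} for the fact that a degree-zero quasimap component is a constant regular map --- so your self-contained treatment (the torsion/residue-gerbe argument for one implication, and the ``identically zero or nowhere vanishing'' dichotomy for sections of degree-zero bundles, combined with prestability and a generating set for $R$, for the other) is a genuinely more complete route; it is essentially the standard quasimap argument and it is sound.

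Two caveats. First, a logical slip in the reduction: having correctly observed that positivity on $\mathscr B$ is equivalent to $d>0$, you then say it ``suffices to show $d=0\Leftrightarrow v$ is a constant regular map.'' That establishes ``positive iff \emph{not} constant regular,'' which is the negation of the corollary as literally printed. The printed statement appears to carry a typo --- the announcement in Section~2 says the bundle is positive ``\emph{unless}'' the LG-quasimap is a constant regular map, and the way the corollary is invoked later (e.g.\ in Corollary~\ref{cor:stability-condition-A-infinity}, where $\Omega$-3 forces $\deg(u^*L_{\vartheta}|_{\mathscr B})>0$ on components with $\deg(\omega^{\log}_{\mathscr C}|_{\mathscr B})=0$) confirms that ``positive iff not constant regular'' is the intended claim, which is exactly what your argument proves; you should, however, say explicitly which version you are establishing rather than letting the mismatch pass silently. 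Second, a minor imprecision: $V\git_{\theta}G=\mathrm{Proj}(R)$ is in general only projective over the affine quotient $\operatorname{Spec} R_{0}$, so ``a projective embedding'' should be a locally closed embedding into $\mathbb P^{M}\times \operatorname{Spec} R_{0}$. This is harmless: $\mathscr B$ is proper, so the pullbacks of elements of $R_{0}$ are constant functions, and your scalar-ratio argument on the positive-degree generators still yields constancy of the coarse map.
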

\begin{proof}
  The proof follows immediately from (i) above and the requirement that $A >
  s_{\max}$ (c.f.\ \cite[Lemma~2.3]{cheong2015orbifold}).
\end{proof}

\begin{coro}\label{interval}
  Having fixed the target $(V,G\le\Gamma, \epsilon, \vartheta)$ and $S$, 
  % $M$ only depends on the target and $S$
  there exists an integer $M > 0$, independent of $g$ and $k$, such that
  the $\Omega$-stability condition does not change as $A$ varies in each interval
  \[
    \big[ \frac{i}{M}, \frac{i+1}{M} \big) \cap (s_{\max}, +\infty), \quad
    \forall i \in \mathbb Z.
  \]
\end{coro}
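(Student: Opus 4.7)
My plan is to identify a single positive integer $M$, depending only on the fixed data $(V,G\le\Gamma,\epsilon,\vartheta,S)$, such that every rational threshold appearing in the $\Omega$-stability condition lies in $\tfrac{1}{M}\mathbb Z$. Once this is achieved, for any $A,A'\in[i/M,(i+1)/M)$ and any $v\in\tfrac{1}{M}\mathbb Z$ we have $v\le A\iff Mv\le i\iff v\le A'$, and similarly $A<v\iff A'<v$, simply because $MA,MA'\in[i,i+1)$ while $Mv\in\mathbb Z$; intersecting the interval with $(s_{\max},\infty)$ does not affect this equivalence.

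Since $\Omega$-1 does not depend on $A$, I only need to analyze the thresholds in $\Omega$-2 and $\Omega$-3. At a non-special closed point $x$ the curve $\sC$ is a scheme, so $\mathrm{ord}_x(u^*f)\in\mathbb Z_{\ge 0}$ and $\mathrm{slope}(f)=c/\mathrm{wt}_\theta(f)$, whence
\[
\tfrac{1}{\mathrm{wt}_\theta(f)}\mathrm{ord}_x(u^*f)+\mathrm{slope}(f)\in\tfrac{1}{M_0}\mathbb Z,\qquad M_0:=\operatorname{lcm}\{\mathrm{wt}_\theta(f):f\in S\}.
\]
This handles all thresholds coming from $\Omega$-2.

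For $\Omega$-3, I first note that if $\Omega$-2 fails throughout the interval, then $\Omega$-stability fails uniformly and there is nothing to prove. Otherwise $\Omega$-2 holds, and Corollary~\ref{cor:non-negative-components} confines the $A$-dependence of $\Omega$-3 to components $\sB\in\Delta_-$; on such $\sB$ the condition becomes $A<A^*_\sB:=\deg(u^*L_\vartheta|_\sB)/|\deg(\omega_\sC^{\log}|_\sB)|$. By Lemma~\ref{lem:bounding-the-order-of-isotropy} orbifold isotropies of $\sC$ have order at most $N_0$; setting $D:=\operatorname{lcm}(1,\ldots,N_0)$, both $\deg(u^*L_\vartheta|_\sB)$ and $\deg(\omega_\sC^{\log}|_\sB)$ then lie in $\tfrac{1}{D}\mathbb Z$ by the standard denominator-control argument of Corollary~\ref{cor:deg-of-L-takes-discrete-values}. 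Moreover, $\sB\in\Delta_-$ forces $\sB$ to be a genus-zero irreducible component, which gives $|\deg(\omega_\sC^{\log}|_\sB)|\le 2$. Writing $\deg(\omega_\sC^{\log}|_\sB)=-\beta/D$ with $\beta\in\{1,\ldots,2D\}$ and $\deg(u^*L_\vartheta|_\sB)=\alpha/D$ with $\alpha\in\mathbb Z$, I obtain $A^*_\sB=\alpha/\beta\in\tfrac{1}{M_1}\mathbb Z$ for $M_1:=\operatorname{lcm}(1,2,\ldots,2D)$.

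Setting $M:=\operatorname{lcm}(M_0,M_1)$, which depends only on $S$ and $N_0$ and hence on the fixed data (not on $g$ or $k$), finishes the argument by the initial observation. The main obstacle is the bookkeeping for $\Delta_-$: one must show simultaneously that $|\deg(\omega^{\log}_\sC|_\sB)|$ is uniformly bounded and that its denominator is controlled, which relies on the classification of irreducible components with negative log-dualizing degree together with the isotropy bound of Lemma~\ref{lem:bounding-the-order-of-isotropy}.
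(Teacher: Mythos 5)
Your proof is correct and takes essentially the same route as the paper: the paper's proof simply cites Corollaries~\ref{cor:deg-of-L-takes-discrete-values} and~\ref{cor:non-negative-components}, which is precisely the denominator control for $\deg(u^*L_\vartheta|_{\mathscr B})$ and the reduction of the $A$-dependence of $\Omega$-3 to $\Delta_-$ that you carry out in detail. Your explicit handling of the $\Omega$-2 thresholds via the least common multiple of the $\theta$-weights of elements of $S$ is left implicit in the paper but is the intended argument.
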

\begin{proof}
  This follows immediately from
  Corollaries~\ref{cor:deg-of-L-takes-discrete-values} and \ref{cor:non-negative-components}.
\end{proof}
\begin{rema}
  If $\theta$ decendes to the coarse moduli of $X$, $S$ only consists of
  weight $1$ elements, and $(g, k) \neq (0, 0)$, then we can take $M = 1$ above.
  In case $(g, k) = (0, 0)$, we may have an additional wall at $A = \frac{d}{2}$.
\end{rema}

We now study the stability condition as $A\to \infty$.
\begin{lemm}
  Suppose $\xi$ is stable.
  There are constants $C_1$ and $C_2$ depending only on $S,\vartheta, g, k, d$ but
  independent of
  $\xi$ and $A > s_{\mathrm{max}}$, such that
\begin{enumerate}
  \item $\#\Irre(\xi)\le C_1$, and
  \item $|\deg (u^*L_{\vartheta}|_\sB)| \le C_2$, for any $\sB\in\Irre(\xi)$.
\end{enumerate}
\end{lemm}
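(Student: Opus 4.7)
The plan is to combine Lemma~\ref{lem:lower-bound-on-degree-of-line-bundles} with the identity $d=\sum_{\sB\in\Irre(\xi)}\deg(u^*L_\vartheta|_\sB)$, and to exploit that $\Omega$-3 forces $\deg(u^*L_\vartheta|_\sB)>0$ on every $\sB\in\Delta_0$. The constants extracted must all be independent of $A$; the $A$-independence of $\delta$ in Lemma~\ref{lem:lower-bound-on-degree-of-line-bundles}(ii) is what makes this possible and is the key point to monitor, as without it the bounds on $g_-$ below would degenerate as $A\to\infty$.

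First I would partition $\Irre(\xi)=\Delta_+\sqcup\Delta_0\sqcup\Delta_-$ as introduced before Lemma~\ref{lem:lower-bound-on-degree-of-line-bundles}, and set $g_+=\sum_{\sB\in\Delta_+}\deg(\omega_\sC^{\log}|_\sB)\ge 0$ and $g_-=-\sum_{\sB\in\Delta_-}\deg(\omega_\sC^{\log}|_\sB)\ge 0$, so that $g_+-g_-=2g-2+k$. Lemma~\ref{lem:lower-bound-on-degree-of-line-bundles}(i),(ii), together with $\Omega$-3 and Corollary~\ref{cor:deg-of-L-takes-discrete-values}, yield the componentwise lower bounds
\[
  \deg(u^*L_\vartheta|_\sB)\ \ge\ \begin{cases}-s_{\max}\deg(\omega_\sC^{\log}|_\sB)&\sB\in\Delta_+,\\[2pt] 1/D&\sB\in\Delta_0,\\[2pt]-(s_{\max}+\delta)\deg(\omega_\sC^{\log}|_\sB)&\sB\in\Delta_-.\end{cases}
\]
Summing over $\Irre(\xi)$ and using $g_+-g_-=2g-2+k$ gives
\[
  d\ \ge\ -s_{\max}(2g-2+k)+\delta\,g_-+\tfrac{1}{D}\#\Delta_0,
\]
which bounds both $g_-$ and $\#\Delta_0$ in terms of $S,\vartheta,g,k,d$; then $g_+=g_-+(2g-2+k)$ is bounded as well.

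Next, Lemma~\ref{lem:bounding-the-order-of-isotropy} bounds every orbifold stabilizer of $\sC$ by $N_0$, so $\deg(\omega_\sC^{\log}|_\sB)$ lies in a discrete subgroup of $\mathbb Q$ with denominator at most $\mathrm{lcm}(1,\ldots,N_0)$. In particular $|\deg(\omega_\sC^{\log}|_\sB)|$ admits a uniform positive lower bound on $\Delta_\pm$; dividing the bounds on $g_\pm$ by this constant caps $\#\Delta_\pm$, and together with the cap on $\#\Delta_0$ this yields part (1).

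Finally, for (2), once $\#\Irre(\xi)\le C_1$ is in hand, the number of special points on any component is bounded by a polynomial in $g,k,C_1$, so $|\deg(\omega_\sC^{\log}|_\sB)|$ is uniformly bounded and the componentwise lower bounds displayed above become uniform. The matching upper bound comes from writing $\deg(u^*L_\vartheta|_\sB)=d-\sum_{\sB'\ne\sB}\deg(u^*L_\vartheta|_{\sB'})$ and applying the uniform lower bound to every $\sB'\ne\sB$, bounding their number by $C_1$. This produces the constant $C_2$, and the only real obstacle throughout is ensuring $A$-independence of every constant, which was prearranged in Lemmas~\ref{lem:bounding-the-order-of-isotropy} and~\ref{lem:lower-bound-on-degree-of-line-bundles}.
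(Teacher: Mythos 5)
Your proof is correct and follows essentially the same route as the paper: sum the componentwise lower bounds from Lemma~\ref{lem:lower-bound-on-degree-of-line-bundles} (plus $\Omega$-3 and Corollary~\ref{cor:deg-of-L-takes-discrete-values} on $\Delta_0$) against $d=\sum_\sB\deg(u^*L_\vartheta|_\sB)$ to bound $g_\pm$ and $\#\Delta_0$, then convert these into bounds on $\#\Irre(\xi)$ and on each summand. The only cosmetic difference is that you invoke the isotropy bound to get a positive lower bound on $|\deg(\omega_\sC^{\log}|_\sB)|$ for $\sB\in\Delta_\pm$, whereas the paper simply uses that $\omega_\sC^{\log}$ is pulled back from the coarse curve, so these degrees are integers.
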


\begin{proof}
  Putting (i) and (ii) of Lemma~\ref{lem:lower-bound-on-degree-of-line-bundles} together, we have
  \[
    0 \leq d + \sum_{\sB\in \Delta_+} s_{\mathrm{max}} \deg
    (\omega_{\mathscr C}^{\log}|_{\sB}) + \sum_{\sB \in
      \Delta_{-}}(s_{\max} + \delta) \deg (\omega_{\mathscr
      C}^{\log}|_{\sB}).
  \]
  This implies
  \begin{equation} \label{eq:bound-on-negative-omega}
    \sum_{\sB\in \Delta_{-}}
    \deg (\omega_{\mathscr C}^{\log}|_{\sB}) \geq
    \frac{s_{\max}}{\delta}(2 - 2g - k) - \frac{d}{\delta} ;
  \end{equation}
  and
  \begin{equation} \label{eq:bound-on-positive-omega}
    \sum_{\sB \in\Delta_{+}} \deg (\omega_{\mathscr C}^{\log}|_{\sB})
    \le 2g - 2 + k  + \frac{s_{\max}}{\delta}(2g - 2 + k) + \frac{d}{\delta}.
  \end{equation}
  As $\deg(\omega_{\mathscr C}^{\log}|_{\sB})\in\ZZ$ for $\sB \in \Irre(\xi)$, both
  $\#\Delta_+$ and $\#\Delta_{-}$ are bounded.

  We decompose $d = \deg (u^*L_{\vartheta})$ into three parts
  \[
    d = \sum_{\mathscr B \in \Delta_+} \deg (u^*L_{\vartheta}|_\sB) + 
    \sum_{\mathscr B \in \Delta_-} \deg (u^*L_{\vartheta}|_\sB) + 
    \sum_{\mathscr B \in \Delta_0} \deg (u^*L_{\vartheta}|_\sB).
  \]
  By \eqref{eq:bound-on-positive-omega}, each $\deg(\omega_{\mathscr
    C}^{\log}|_{\sB})$ is bounded from above. Hence,
  by Lemma~\ref{lem:lower-bound-on-degree-of-line-bundles}, $\deg(u^*L_{\vartheta}|_{\sB})$ is
  bounded from below.  By $\Omega$-3 and
  Corollary~\ref{cor:deg-of-L-takes-discrete-values}, we have
  $\deg(u^* L_{\vartheta}|_{\mathscr B}) > \frac{1}{D} > 0$ for each $\mathscr B \in
  \Delta_0$. Then it is easy to see that $\#\Delta_{0}$ is also bounded and
  each summand must be also bounded from above.
\end{proof}

\begin{coro} \label{cor:stability-condition-A-infinity}
  Fixing $S, g, k, d$, for any $\vartheta$ and $A \gg 0$,
  assuming $\Omega$-1, then $\Omega$-2 is automatic and $\Omega$-3 is equivalent to
  \begin{enumerate}
  \item[$\Omega$-\textbf{3'}.] for any $\sB\in\Irre(\xi)$,
    either $\deg(\omega^{\log}_{\mathscr C}|_{\sB}) > 0$, or
    $\deg (u^*L_{\vartheta}|_{\sB}) > \deg(\omega^{\log}_{\mathscr
      C}|_{\sB}) = 0$.
  \end{enumerate}
  In particular, the $\Omega$-stability is independent of $A\gg 0$ and $\vartheta$.
\end{coro}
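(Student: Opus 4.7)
The plan is to leverage the a priori degree bounds from the preceding lemma and perform a componentwise analysis, exploiting that those bounds are uniform in $A > s_{\max}$. By that lemma, any $\Omega$-stable $\xi$ of degree $d$ satisfies $\#\Irre(\xi) \le C_1$ and $|\deg(u^*L_\vartheta|_\sB)| \le C_2$, with $C_1, C_2$ depending only on $S, \vartheta, g, k, d$.

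For the forward direction, I would assume stability and pick $A > C_2 + 1$. On $\sB \in \Delta_-$, the inequality $\Omega$-3 forces $\deg(u^*L_\vartheta|_\sB) > A \cdot |\deg(\omega^{\log}_\sC|_\sB)| \ge A > C_2$, contradicting the upper bound, so $\Delta_- = \emptyset$. On $\sB \in \Delta_0$, $\Omega$-3 reduces directly to $\deg(u^*L_\vartheta|_\sB) > 0$, matching $\Omega$-3'. To establish $\Omega$-2 from $\Omega$-1 for $A \gg 0$: at a non-special $x$, if some $u^*f(x) \neq 0$ then the LHS of $\Omega$-2 is at most $s_{\max} < A$; if $x$ is a base point, the discreteness imposed by $\Omega$-1 guarantees some $u^*f|_\sB \not\equiv 0$ on the component $\sB \ni x$ (otherwise all of $\sB$ would lie in the base locus), and then $\mathrm{ord}_x(u^*f) \le \deg(u^*L_{k\vartheta + c\varpi}|_\sB)$ is uniformly bounded in terms of $C_2, S, g, k$, again dominated by $A$.

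For the reverse direction, that $\Omega$-1 together with $\Omega$-3' implies full $\Omega$-stability for $A \gg 0$, I would re-derive the same bounds using only $\Omega$-1: the nonvanishing of $u^*f|_\sB$ for some $f \in S$ yields $\deg(u^*L_\vartheta|_\sB) \ge -s_{\max}\deg(\omega^{\log}_\sC|_\sB)$; this together with $\Omega$-3' (which forces $\Delta_- = \emptyset$ and, by Corollary~\ref{cor:deg-of-L-takes-discrete-values}, $\deg(u^*L_\vartheta|_\sB) \ge 1/D$ on $\Delta_0$) and the global identity $\sum_\sB \deg(u^*L_\vartheta|_\sB) = d$ bounds both $\#\Irre(\xi)$ and each $|\deg(u^*L_\vartheta|_\sB)|$. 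Then $\Omega$-3 on $\Delta_+$ follows for $A \gg 0$ by positivity, and $\Omega$-2 follows as above.

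Finally, independence from $\vartheta$ is immediate from the shape of $\Omega$-3': replacing $\vartheta$ by $\vartheta + a\varpi$ preserves $\theta$ and changes $\deg(u^*L_\vartheta|_\sB)$ by $a\deg(\omega^{\log}_\sC|_\sB)$, which vanishes on $\Delta_0$ (where the only nontrivial constraint in $\Omega$-3' lives), so $\Omega$-3' is invariant. The main obstacle is the bootstrap between stability and the degree bounds: the preceding lemma nominally presumes the full $\Omega$-stability condition, so for the reverse direction one must re-run its degree estimate using only $\Omega$-1 and $\Omega$-3', which is precisely what the componentwise lower bound from $\Omega$-1 makes possible.
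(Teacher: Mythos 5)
Your proposal is correct and follows the route the paper intends: the paper's own proof writes out only the $\vartheta$-independence (via $u^*L_{\vartheta+a\varpi}\cong u^*L_\vartheta\otimes(\omega^{\log}_\sC)^{\otimes a}$, which degenerates to an equality of degrees on $\Delta_0$) and dismisses the rest as obvious, relying implicitly on the uniform bounds of the preceding lemma exactly as you do. Your write-up is in fact more careful than the paper's on the one genuinely delicate point, namely that for the direction ($\Omega$-1 $+$ $\Omega$-3' $\Rightarrow$ stability) the degree bounds must be re-derived from $\Omega$-1 and $\Omega$-3' alone rather than quoted from the lemma, whose hypothesis is full $\Omega$-stability.
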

\begin{proof}
  To see that it is independent of $\vartheta$, note that two different choices
  differ by a multiple of $\epsilon$. Using $u^*L_{\epsilon} \cong \omega_{\mathscr
    C}^{\log}$, we have $u^*L_{\vartheta + a\epsilon} = u^*L_{\vartheta} \otimes
  (\omega_{\mathscr C}^{\log})^{\otimes a}$ for any $a\in \mathbb Z$. Thus when
  $\deg(\omega_{\mathscr C}^{\log}|_{\sB} )= 0$,
  $\deg(u^*L_{\vartheta}|_{\sB}) = \deg(u^*L_{\vartheta +
    a\epsilon}|_{\sB})$. Hence the $\Omega$-3' above does not depend on the
  choice of $\vartheta$. The rest is obvious.
\end{proof}

\subsection{Dependence on $\Omega$ in the projective case}
\label{sec:stability-proper-case}
%The properness of the moduli requires more assumptions.

% {\color{red}
% In this section we suppose ${\color{red}X = }V\git_{\theta} G = \mathrm{Proj}(R_\bullet)$ is projective.
% }
% \marginpar{\tiny Yang: The assumptions of the following two lemmas are slightly
%   different from that $V\git_{\theta} G$ is projective.}
%which is projective over the affine quotient $V\git_0 G =\operatorname{Spec} R_0$.

% {\color{red}When $\vartheta$ is understood, we will drop $\vartheta$ from
%   $\slope_\vartheta(f)$, say $\slope(f)$. We let
% $R_+=\oplus_{k\ge 1 , c}
% R_{k\theta,c\epsilon}$
% }
% \marginpar{\tiny Yang: We have been using $\slope$ without the subscript
%   $\vartheta$ above.}
% {\color{blue}
Recall that $R_+=\bigoplus_{k\ge 1 , c} R_{k\vartheta,c\epsilon}$;
it generates the ideal defining the unstable locus
$V^{\mathrm{us}}(\theta)$.
%
%In this subsection assume that $\underline{X}$ is projective. This is equivalent
%to that $R_0 \cong \mathbb C$. (We assume that $V$ is connected.) We will prove
%\begin{theo}
%  \label{thm:properness}
%  Let $\Omega$ be as in Definition~\ref{stability}. If
%  $\underline{X}$ is projective and
%  \begin{equation}
%    \label{eq:f_1_to_r_defines_unstable_locus}
%    V^{\mathrm{ss}}(\theta) = \{x \in V \mid \exists f\in S, f(x) \neq 0\},
%  \end{equation}
%  then $\mathrm{LGQ}^{\Omega}_{g,k}(X, d)$ is proper over $\mathbb C$.
%\end{theo}
%\begin{rema}
%  \label{rmk:node-and-marking-conditions-already-included}
%  When $S$ satisfies \eqref{eq:f_1_to_r_defines_unstable_locus}, by
%  Remark~\ref{rmk:if-S-defines-unstable-locus}, (1) of
%  Definition~\ref{stability} is redundant.
%\end{rema}
%
%We now discuss the dependence of the stability condition on the choices of
%$\Omega$ in this case $X$ is projective.
%We write $\mathrm{slope}_{\vartheta}(\cdot)$ instead of $\mathrm{slope}(\cdot)$ to
%emphasize the dependence on $\vartheta$.
Recall that $S\subset R_+$ contains only nonzero homogeneous elements in $R_+$,
and $S$ is said to be full if
\begin{equation}
  \label{eq:f_1_to_r_defines_unstable_locus}
  V^{\mathrm{ss}}(\theta) = \{x \in V \mid \exists f\in S, f(x) \neq 0\}.
\end{equation}
% \begin{defi}
%   \label{def:full}
%   We say $S$ is full when the following identity holds:
% {\color{red}In the following}
As always,
we assume
\begin{equation}
  \label{eq:A-big-in-projective-subsection}
  A > \max\{\mathrm{slope}(f) \mid f\in S\}.
\end{equation}
%Recall that any two choices for $\vartheta$ differ by a multiple of $\epsilon$.

% is said to be full if
% \begin{defi}
%   \label{def:full}
%   We say $S$ is full when the following identity holds:
%   \begin{equation}
%     \label{eq:f_1_to_r_defines_unstable_locus}
%     V^{\mathrm{ss}}(\theta) = \{x \in V \mid \exists f\in S, f(x) \neq 0\},
%   \end{equation}
% \end{defi}

We now discuss the dependence of the stability condition on the choices of
$S,A,\vartheta$ in the case where $V\git_{\theta}
G$ is
projective and $S$ is full, assuming $\theta = \vartheta|_{G}$ is fixed.
  The dependence on $\vartheta$ is already given by Lemma~\ref{lem:dependence-on-C-and-vartheta}.
  Moreover, % as long as $\theta = \vartheta|_{G}$ is fixed,
  it is easy to see that the decomposition
  $R_{0} = \bigoplus_{c} R_{0, c\epsilon}$
is independent on the choice of $\vartheta$,
where $R_{0} = \mathbb C[V]^{G}$.

In the following we will further assume that $R_0 = R_{0,0}$. In the
projective case this will be
satisfied if we assume $V$ is reduced, which is enough for most of our
applications.

% {\color{blue}
%  We claim that $R_{0} = R_{0, 0}$.
%   Indeed, since $V\git_{\theta} G$ is projective,
%   $R_{0}$ is a finite $\mathbb C$-algebra.
%   Suppose $f\in R_{0,c\epsilon}$ for $c\neq 0$, then for sufficiently
%   large $n$, $f^n\in R_{0,nc\epsilon} = 0$. Since $V$ is reduced and
%   $R_{0}\subset \mathbb C[V]$, we have $f=0$.
% }
  % When $V\git_{\theta} G$ is projective,
  % $R_{0\theta} = \mathbb C[V]^{G}$ is a finite $\mathbb C$-algebra.
  % If further $R_{0\theta}$ is reduced, then it consists of locally constant
  % functions, and thus $R_{0\theta} = R_{0\theta, 0\epsilon}$.
  % as long as $\theta =
% \vartheta|_{G}$ is fixed.
% {\color{red}
% We write
% $\mathrm{slope}_{\vartheta}(\cdot)$ instead of $\mathrm{slope}(\cdot)$ to
% emphasize the dependence on $\vartheta$.

% %Recall that we always require that
% %$S\subset R_+$ is a finite set of nonzero homogeneous elements and
% %\[
% %  A > \max\{\mathrm{slope}_{\vartheta}(f) \mid f\in S\}.
% %\]
% Recall that any two choices for $\vartheta$ differ by a multiple of $\zeta$.
% }

\begin{lemm}\label{lem:C-max}
Suppose $R_{0} = R_{0, 0}$ and $S$ is full, then
\[
  \max_{f\in S}\{\mathrm{slope}(f)\} =
  \max \{\mathrm{slope}(f) \mid
  0 \neq  f \in R_+, \text{$f$ is homogeneous} \}
\]
is independent of the choice of $S$.
\end{lemm}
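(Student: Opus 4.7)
The plan is to introduce the intrinsic quantity $\sigma := \max\{\mathrm{slope}(f) \mid 0 \ne f \in R_+ \text{ bihomogeneous}\}$, show it is attained, and prove that $\max_{f \in S}\mathrm{slope}(f) = \sigma$. Since $\sigma$ depends only on the bigraded ring $R$ and not on $S$, this immediately yields the claimed $S$-independence as a bonus.

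First I would show that $\sigma$ is attained. Projectivity of $V\git_{\theta}G$ gives $R$ as a finitely generated graded $\mathbb C$-algebra (Hilbert--Nagata); pick bihomogeneous algebra generators, writing $g_1,\ldots,g_r$ for those of positive $\theta$-weight, with bidegrees $(k_i,c_i)$ (the remaining generators lie in $R_0$). Any bihomogeneous $f\in R_+$ expands as $\sum r_\alpha\cdot g_{i_1}^{n_1}\cdots g_{i_r}^{n_r}$ with $r_\alpha\in R_0$; the hypothesis $R_0=R_{0,0}$ forces every $r_\alpha$ to have $R$-charge $0$, so each monomial has slope $(\sum n_j c_{i_j})/(\sum n_j k_{i_j})$, a convex combination of $\mathrm{slope}(g_i)$'s, hence bounded by $\sigma_0:=\max_i\mathrm{slope}(g_i)$. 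As all monomials of $f$ share its bidegree they share its slope, so $\mathrm{slope}(f)\le\sigma_0$, with equality attained by any $g_i$ realizing $\sigma_0$. Thus $\sigma=\sigma_0$.

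The inequality $\max_{f\in S}\mathrm{slope}(f)\le\sigma$ is immediate from $S\subset R_+$. For the reverse, let $g_*\in R_+$ realize $\sigma$, of bidegree $(k_*,c_*)$, replacing by a positive power if necessary so that $g_*$ is not nilpotent in $\mathbb C[V]$. Fullness of $S$ (see \eqref{eq:f_1_to_r_defines_unstable_locus}) says $V(S)=V^{\mathrm{us}}(\theta)$ set-theoretically; since $g_*\in R_+$ vanishes on $V^{\mathrm{us}}(\theta)$, Hilbert's Nullstellensatz yields $N>0$ with $g_*^N\in(S)\cdot\mathbb C[V]$, and $g_*^N\ne 0$. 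Using reductivity of $\Gamma$ via the Reynolds operator, project successively onto the $\theta^{Nk_*}$-isotypic component of $\mathbb C[V]$ and onto the $\vartheta$-eigenspace of the induced $\Gamma/G\cong\mathbb C^*$-action; this produces a bihomogeneous decomposition $g_*^N=\sum_{g\in S}a_g\,g$ in $R$ with $a_g\in R_{(Nk_*-k_g)\vartheta,(Nc_*-c_g)\epsilon}$.

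Finally, pick $g_0\in S$ with $a_{g_0}\ne 0$. If $Nk_*=k_{g_0}$, then $a_{g_0}\in R_{0,(Nc_*-c_{g_0})\epsilon}$, and $R_0=R_{0,0}$ forces $Nc_*=c_{g_0}$, giving $\mathrm{slope}(g_0)=c_*/k_*=\sigma$. Otherwise $a_{g_0}$ is a nonzero bihomogeneous element of $R_+$, so $\mathrm{slope}(a_{g_0})\le\sigma$ by the first step; combined with
\[
  \sigma=\frac{Nc_*}{Nk_*}=\frac{(Nk_*-k_{g_0})\,\mathrm{slope}(a_{g_0})+k_{g_0}\,\mathrm{slope}(g_0)}{Nk_*},
\]
a convex combination with strictly positive weights of two numbers both $\le\sigma$, this forces $\mathrm{slope}(g_0)=\sigma$. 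Either way $\max_{f\in S}\mathrm{slope}(f)\ge\sigma$, completing the proof. The main technical obstacle is the passage from the Nullstellensatz relation in $\mathbb C[V]$ to a bihomogeneous identity with coefficients in $R$: one must verify that the Reynolds projection lands in $R$ (not merely $\mathbb C[V]$) and that a further projection onto the $\vartheta$-eigenspace of the $\mathbb C^*$-action places each $a_g$ in the stated bigraded piece.
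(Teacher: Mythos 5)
Your proof is correct and follows essentially the same route as the paper's: establish that the maximum slope over all homogeneous elements of $R_+$ is attained (via finite generation and $R_0=R_{0,0}$), then use the Nullstellensatz together with the Reynolds operator to write a power of a maximizer as a bihomogeneous combination of elements of $S$, and conclude by the slope-of-product comparison (your ``convex combination'' identity is the same computation as the paper's inequality \eqref{eq:slope-inequality} with its strictness clause). The only blemish is the parenthetical ``replacing by a positive power if necessary so that $g_*$ is not nilpotent,'' which does not achieve what it claims; the needed fact $g_*^N\neq 0$ is automatic in the reduced setting where the lemma is applied, and the paper makes the same implicit assumption.
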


\begin{proof}
%The first statement follows immediately from the proof for
 %Lemma~\ref{lem:dependence-on-C-and-vartheta}.
We first claim that
\begin{equation}
    \label{eq:set-of-all-slopes}
    \{\mathrm{slope}(f)  \mid 0 \neq  f \in R_+, \text{$f$ is homogeneous}\}
\end{equation}
has a maximum.
Observe that for any homogeneous $f,g\in R_+$ satisfying $fg\neq 0$, we have
\begin{equation} \label{eq:slope-inequality}
    \mathrm{slope}(fg) \leq \max\{\mathrm{slope}(f), \mathrm{slope}(g)\}.
\end{equation}
Moreover, the inequality is strict unless $\slope(f)=\slope(g)$.
% \marginpar{\tiny Yang:
% I made a mistake here: $V\git_{\theta} G$ is projective does not main $R_0$ has
% only constant functions. It only implies that $R_0$ is a finite $\mathbb
% C$-algebra. In general $V$ can be disconnected and nonreduced. Since this $V$
% will become the degeneracy locus, we should not put too much constraints on $V$.}
% {\color{red}
%   As $R_0$ are weight 0 $\theta$-equivariant functions on $V$, since
%   $V/\!\!/_\theta G$ is projective, $R_0$ has only constant functions. Since
%   $R_+$
% is a finitely generated $\CC$-algebra, }
Since $R_{\bullet}$ is a finitely
generated $R_{0}$-algebra and we have assumed $R_0 = R_{0,0}$,
\eqref{eq:set-of-all-slopes} has a
maximum, achieved at a homogeneous
% {\color{red} element of $R_{\bullet}$}
% {\color{blue}
generator  $h \in R_{+}$.
% }
% (as a $\mathbb C$-algebra). {\color{blue}This proves the claim.}

Let $S$ be full, it suffices to show that
  \begin{equation}
    \label{eq:slope-h-smaller-than-max-in-S}
    \mathrm{slope}(h) \leq \max_{f\in S}\{\mathrm{slope}(f)\}.
\end{equation}
Combined with that $S\subset R_+$, this will prove the lemma.

By \eqref{eq:f_1_to_r_defines_unstable_locus} and Hilbert's Nullstellensatz, $h$
is contained in the radical
of the ideal generated by $S$. %Because $R_+$ is generated by $S$ as $\CC$-algebra,
Thus there are regular functions $g_i$ on $V$, and $f_i \in S$ so that
\begin{equation} \label{eq:h-as-a-comb-of-f_i}
    h^m = g_1 f_1 + \cdots  + g_r f_r.
\end{equation}

% {\color{red}
% Suppose that $h$ has \underline{bi-degree} $(k,c)$.
% \marginpar{\tiny Yang: You have already removed the words on bigrading or
%   bidegree. So it is confusing to directly use ``bi-degree'' here. Do you want
%   to just write $h \in R_{k\theta, c\epsilon}$?
% }
% By a standard argument in applying the Reynolds operator, we can assume
% each $g_i f_i$ is homogeneous of bi-degree $(k,c)$.

% We claim that $\mathrm{slope}(f_i) = \mathrm{slope}(h)$ for every $i$. Indeed,
% if some $g_i$ in \eqref{eq:h-as-a-comb-of-f_i} is not a constant, then since $X$
% is projective, $g_i\in R_+$.
% \underline{Since both $f_i$ and $g_if_i$ are} \underline{ homogeneous, $g_i$ is homogeneous}.
% \marginpar{\tiny Yang: This is not true, for $\mathbb C[V]$ might not be
%   an integral domain. Instead, we can use the Raynold
%   operator to make sure $g_i \in R_{k_i,c_i}$ in the first place,
%   where $(k_i,c_i)$ is the bidegree of $f_i$.
% }
% Thus by the
% strict inequality stated
% immediately after \eqref{eq:slope-inequality},
% $\mathrm{slope}(h) = \mathrm{slope}(f_ig_i) < \mathrm{slope}(f_i)$, unless
% $\slope(g_i)=\slope(f_i)=\slope(g_if_i)$.

% Since $\mathrm{slope}(h) \ge \mathrm{slope}(f_i)$
% by the maximality assumption, we get $\mathrm{slope}(h) = \mathrm{slope}(f_i)$.
% This proves the claim, and proves
% \eqref{eq:slope-h-smaller-than-max-in-S}.
% }

% {\color{blue}
Suppose that $h \in R_{k\theta, c\epsilon}$ and $f_i\in S\cap R_{k_i\theta,
  c_i\epsilon}$, $i=1 ,\ldots, r$.
% has \underline{bi-degree} $(k,c)$.
By a standard argument using the Reynolds operator, we can assume
$g_i \in R_{(mk-k_i)\theta, (mc-c_i)\epsilon}$ for each $i$. Thus in particular
$\mathrm{slope}(g_if_i) = \mathrm{slope}(h)  = c/k$ unless $g_if_i = 0$.

Pick any $g_{i_0}f_{i_0}\neq 0$. We claim that $\mathrm{slope}(f_{i_0}) =
\mathrm{slope}(h)$. Indeed,
first consider the case $k_{i_0} < mk$.  Thus $g_{i_0}\in R_+$.
By the maximality assumption on $\slope(h)$, \eqref{eq:slope-inequality} for
$f_{i_0}$ and $g_{i_0}$
cannot be strict.
By the statement immediately after \eqref{eq:slope-inequality}, we must have
$\slope(g_{i_0})=\slope(f_{i_0})=\slope(g_{i_0}f_{i_0}) = \slope(h)$. Next consider the
case $k_{i_0} = mk$. Thus $g_{i_0} \in R_{0} = R_{0, 0}$,
by the assumption of the lemma. Thus $c_{i_0} = mc$ and $\slope(f_{i_0}) =
\slope(h) = c/k$. Thus we have proved \eqref{eq:slope-h-smaller-than-max-in-S}
and the lemma follows.
% }
\end{proof}

% {\color{blue}
  Let $\xi = (\mathscr C,\Sigma^{\mathscr C}, u, \kappa)$
  be a prestable LG-quasimap to $X$ and $x\in \mathscr C$ be a nonspecial closed
  point.
  For any homogeneous $f\neq 0 \in R_{+}$, we abbreviate
  \[
    \overline{\mathrm{ord}}_x(f) = \frac{1}{\mathrm{wt}_{\theta}(f)}
    \mathrm{ord}_{x}(u^*f) + \mathrm{slope}(f),
  \]
  which is a quantity used in $\Omega$-2.
  \begin{lemm}
    \label{lem:min-order}
    Suppose $R_{0} = R_{0, 0}$ and $S$ is full, then
    \[
      \min_{f\in S}\{ \overline{\mathrm{ord}}_x(f)\} =
      \min \{ \overline{\mathrm{ord}}_x(f) \mid f \in R_+,
      \text{$f\ne 0$ homogeneous}\}
    \]
    is independent of the choice of $S$.
    % Then as along as $S$ satisfies
    % \eqref{eq:f_1_to_r_defines_unstable_locus},
    % Suppose $R_{0\theta} \subset \mathbb C[V]$ only consists of locally constant functions,
    % then
    % In particular, $\Omega$-2 does not depend on the choice of $S$ as long as it is full.
    % is independent of the choice of $S$.
  \end{lemm}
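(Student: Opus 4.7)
The plan is to adapt the strategy of Lemma~\ref{lem:C-max}, replacing slope with the quantity $\overline{\mathrm{ord}}_x$. The key formal properties I will exploit are: first, for nonzero homogeneous $f,g\in R_+$ with $fg\ne 0$,
\[
  \overline{\mathrm{ord}}_x(fg) \;=\; \frac{\mathrm{wt}_\theta(f)\,\overline{\mathrm{ord}}_x(f) + \mathrm{wt}_\theta(g)\,\overline{\mathrm{ord}}_x(g)}{\mathrm{wt}_\theta(f) + \mathrm{wt}_\theta(g)}
\]
is a convex combination, so in particular $\overline{\mathrm{ord}}_x(h^m) = \overline{\mathrm{ord}}_x(h)$; second, for a sum of homogeneous elements of a common bi-degree, the value of $\overline{\mathrm{ord}}_x$ on the sum is at least the minimum value on the summands, because $\mathrm{ord}_x$ is a valuation; and third, multiplying by an element of $R_{0,0}$ only increases $\overline{\mathrm{ord}}_x$, since its pullback has nonnegative order and it contributes zero to both $\theta$-weight and $R$-charge.

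I would first check that the right hand minimum is attained. Since $R = \mathbb C[V]^G$ is finitely generated, pick homogeneous algebra generators $h_1, \ldots, h_N$ of $R_+$ over $R_0 = R_{0,0}$. Any homogeneous $f\in R_+$ of bi-degree $(k\vartheta, c\epsilon)$ is a $\mathbb C$-linear combination of monomials $c_\mu \cdot h_{i_1}\cdots h_{i_s}$ with $c_\mu \in R_{0,0}$ and $s \geq 1$, each of the same bi-degree as $f$. Combining the three properties above yields $\overline{\mathrm{ord}}_x(f) \geq \min_i \overline{\mathrm{ord}}_x(h_i)$, so the right hand minimum exists and is realized by some generator.

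Let $h$ be any such minimizer. By fullness of $S$ and Hilbert's Nullstellensatz, $h^m = g_1 f_1 + \cdots + g_r f_r$ for some $m\ge 1$ with $f_i\in S$, and the Reynolds operator argument of Lemma~\ref{lem:C-max} lets us arrange that each $g_if_i$ has the same bi-degree as $h^m$. Applying $u^*$ and the valuation $\mathrm{ord}_x$, there is some index $i^*$ with $g_{i^*} f_{i^*}\ne 0$ and
\[
  \overline{\mathrm{ord}}_x(g_{i^*}f_{i^*}) \;\leq\; \overline{\mathrm{ord}}_x(h^m) \;=\; \overline{\mathrm{ord}}_x(h).
\]
If $\mathrm{wt}_\theta(g_{i^*}) = 0$ then the hypothesis $R_0 = R_{0,0}$ forces $g_{i^*}\in R_{0,0}$, so $\mathrm{ord}_x(u^* g_{i^*})\ge 0$ gives $\overline{\mathrm{ord}}_x(f_{i^*}) \leq \overline{\mathrm{ord}}_x(g_{i^*}f_{i^*})\leq \overline{\mathrm{ord}}_x(h)$. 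If instead $g_{i^*}\in R_+$, the convex combination formula expresses $\overline{\mathrm{ord}}_x(g_{i^*}f_{i^*})$ as a strict mixture of $\overline{\mathrm{ord}}_x(g_{i^*})$ and $\overline{\mathrm{ord}}_x(f_{i^*})$; since $g_{i^*}\in R_+$ and $h$ was chosen minimizing, $\overline{\mathrm{ord}}_x(g_{i^*})\ge \overline{\mathrm{ord}}_x(h)$, and the convex combination being $\le \overline{\mathrm{ord}}_x(h)$ forces $\overline{\mathrm{ord}}_x(f_{i^*}) \le \overline{\mathrm{ord}}_x(h)$. Either way $f_{i^*}\in S$ realizes $\min_{f\in S}\overline{\mathrm{ord}}_x(f) \leq \overline{\mathrm{ord}}_x(h)$, and the reverse inequality is immediate from $S \subset R_+$.

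The main obstacle I anticipate is the case $g_{i^*}\in R_+$: a naive recursion on $g_{i^*}$ need not terminate because the $\theta$-weight of $g_{i^*}$ may exceed that of $h$. The clean way around this is to start from a minimizer of $\overline{\mathrm{ord}}_x$ on all of $R_+$, turning the convex combination inequality into the desired bound in one step; the hypothesis $R_0=R_{0,0}$ is essential here, as it pins $g_{i^*}$ into exactly one of the two cases and prevents any $R$-charge drift from a weight-zero factor.
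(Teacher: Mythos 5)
Your proposal is correct and follows essentially the same route as the paper's proof: both establish the existence of the minimum by reducing to finitely many homogeneous generators via the (sub)multiplicativity of $\overline{\mathrm{ord}}_x$, then apply the Nullstellensatz with the Reynolds operator to write $h^m=\sum g_if_i$ in a single bi-degree, and conclude by the same case split on whether $\mathrm{wt}_\theta(g_{i^*})=0$ (forcing $g_{i^*}\in R_{0,0}$) or $g_{i^*}\in R_+$ (where minimality of $h$ pins down $\overline{\mathrm{ord}}_x(f_{i^*})$). Your use of the exact mediant identity for $\overline{\mathrm{ord}}_x(fg)$ in place of the paper's min-inequality \eqref{eq:reduced-order-inequality} is only a cosmetic difference.
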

\begin{proof}
  It is parallel to Lemma~\ref{lem:C-max}. We
  sketch the proof while indicating the difference.
  % For any homogeneous $0\neq f\in R_+$, define
  % \[
  %   \overline{\mathrm{ord}}_x(f) = \frac{1}{wt_{\theta}(f)}
  %   \mathrm{ord}_{x}(u^*f) + \mathrm{slope}(f).
  % \]
  First note that $\overline{\mathrm{ord}}_x(f) = \infty$ if and only if $u^*f$ is
  identically zero near $x$. Thus by the prestability condition,
  $\overline{\mathrm{ord}}_x(f)<\infty$ for some $f\in R_{+}$.

  It is easy to show that for any homogeneous elements $f,g\in R_{+}$ such
  that $fg\neq 0$ and $\overline{\mathrm{ord}}_{x}(fg)< \infty$, we have
  \begin{equation}
    \label{eq:reduced-order-inequality}
    \overline{\mathrm{ord}}_x(fg) \geq \min\{\overline{\mathrm{ord}}_x(f),
    \overline{\mathrm{ord}}_x(g)\},
  \end{equation}
  which is strict unless $\overline{\mathrm{ord}}_x(f) =
  \overline{\mathrm{ord}}_x(g)$. Further, if $f\in R_+$ and $g\in R_{0} =
  R_{0,0}$ are homogeneous with $fg \neq 0$, then
  we have
  \[
    \overline{\mathrm{ord}}_x(fg) \geq \overline{\mathrm{ord}}_x(f).
  \]
  Using that, as in the proof of Lemma~\ref{lem:C-max}, the minimum
  \[
    \min\{\overline{\mathrm{ord}}_x(f) \mid 0\neq f\in R_+, f\text{ is homogeneous}\} < \infty
  \]
  is achieved at some homogeneous $h\in R_+$. % , with
  % $\overline{\mathrm{ord}}_x(h)<\infty$.
  As before, write
  \[
    h^m = g_1 f_1 + \cdots  + g_r f_r.
  \]
  % with $g_i$ the same as in the proof of Lemma~\ref{lem:C-max}.
  % The function $\overline{\mathrm{ord}}_x(\cdot)$ is superadditive.
  % Hence
  It is easy to see that $\overline{\mathrm{ord}}_x(g_{i_0}f_{i_0}) \leq
  \overline{\mathrm{ord}}_x(h^m) = \overline{\mathrm{ord}}_x(h)$ for some $i_0$,
  by the superadditivity of $\mathrm{ord}_{x}(\cdot)$.
  By the minimality of $\overline{\mathrm{ord}}_x(h)$, we must actually have
  $\overline{\mathrm{ord}}_x(g_{i_0}f_{i_0}) = \overline{\mathrm{ord}}_x(h)$.
  Then as in the proof of Lemma~\ref{lem:C-max}, one uses
  \eqref{eq:reduced-order-inequality} to show that $\overline{\mathrm{ord}}_x(h)
  = \overline{\mathrm{ord}}_x(f_{i_0})$. Indeed, the $k_{i_0}<mk$ case is
  verbatim; if $k_{i_0} = mk$, then $g_{i_0}\in R_{0} =
  R_{0,0}$ and we have $ \overline{\mathrm{ord}}_x(f_{i_0}) \leq
  \overline{\mathrm{ord}}_x(g_{i_0}f_{i_0}) = \overline{\mathrm{ord}}_x(h)$. Again, by the
  minimality of $\overline{\mathrm{ord}}_x(h)$, we must have
  $\overline{\mathrm{ord}}_x(f_{i_0}) = \overline{\mathrm{ord}}_x(h)$.
  This completes the proof.
  % , completing the proof.
  % If $k_{i_0} = mk$, then $g_{i_0}\in R_{0\theta} = R_{0\theta, 0\epsilon}$.
\end{proof}
Summarizing,
\begin{coro}
  \label{cor:projective-case-independent-of-S}
  Given $\vartheta$, suppose $V$ is reduced, $V\git_{\theta} G$ is projective,
  and $S$ is full. Then the right hand side of \eqref{eq:A-big-in-projective-subsection} is
  independent of the choice of $S$. Further, once $A$ is chosen, the $\Omega$-stability does
  not depend on the choice of $S$.
\end{coro}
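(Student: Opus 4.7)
The plan is to combine Lemmas~\ref{lem:C-max} and \ref{lem:min-order} by verifying, one by one, that each of the three stability conditions $\Omega$-1, $\Omega$-2, $\Omega$-3 is insensitive to the choice of $S$ once $A$ and $\vartheta$ are fixed. First I would record the input: since $V$ is reduced and $V\git_{\theta}G$ is projective, we have $R_{0}=R_{0,0}$ (this is the hypothesis made right before Lemma~\ref{lem:C-max}), so both lemmas are applicable. Lemma~\ref{lem:C-max} then yields
\[
  \max_{f\in S}\{\slope(f)\} = \max\{\slope(f)\mid 0\neq f\in R_+\text{ homogeneous}\},
\]
and the right-hand side is manifestly independent of $S$. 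This gives the first assertion of the corollary, and also ensures that the condition $A>\max_{f\in S}\{\slope(f)\}$ from \eqref{eq:A-big-in-projective-subsection} is an intrinsic bound on $A$.

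Next, for the stability itself I would treat the three conditions in turn. Condition $\Omega$-3 does not involve $S$, so there is nothing to check. Condition $\Omega$-1 depends on $S$ only through the common vanishing locus $V(S)\sub V$; by fullness \eqref{eq:f_1_to_r_defines_unstable_locus}, this locus equals $V^{\mathrm{us}}(\theta)$ set-theoretically, and ``discrete and disjoint from special points'' is a set-theoretic condition, hence independent of $S$. For $\Omega$-2, the crucial step is Lemma~\ref{lem:min-order}, which (under the same hypotheses $R_0=R_{0,0}$ and $S$ full) gives
\[
  \min_{f\in S}\{\overline{\mathrm{ord}}_x(f)\} = \min\{\overline{\mathrm{ord}}_x(f)\mid 0\neq f\in R_+\text{ homogeneous}\}
\]
at every non-special closed point $x\in\sC$. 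Thus the quantity being compared to $A$ in $\Omega$-2 is intrinsic to $(V,G\le\Gamma,\varpi,\vartheta,\xi)$, so $\Omega$-2 too is independent of $S$.

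There is essentially no obstacle, since the real content has been absorbed into Lemmas~\ref{lem:C-max} and \ref{lem:min-order}; the only point that requires a moment's care is the reduction $V$ reduced $\Rightarrow R_0=R_{0,0}$ in the projective case, and the observation that the $\Omega$-1 condition is truly set-theoretic so that equality of zero sets (not of ideals) is enough. Assembling these three checks proves the second assertion and finishes the corollary.
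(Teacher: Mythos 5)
Your proposal is correct and follows essentially the same route as the paper: both arguments reduce the statement to Lemmas~\ref{lem:C-max} and~\ref{lem:min-order} after securing $R_{0}=R_{0,0}$, and both observe that $\Omega$-1 (via fullness, it is set-theoretic and already contained in prestability) and $\Omega$-3 (which does not involve $S$) need no further work. The one step you defer --- that $V$ reduced together with $V\git_{\theta}G$ projective forces $R_{0}=R_{0,0}$ --- is carried out in the paper in one line: projectivity makes $R_{0}=\mathbb C[V]^{G}$ a finite $\mathbb C$-algebra, so for $f\in R_{0,c\epsilon}$ with $c\neq 0$ one has $f^{n}\in R_{0,nc\epsilon}=0$ for $n$ large, and reducedness of $V$ then gives $f=0$.
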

\begin{proof}
  First note that since $S$ is full, $\Omega$-1 is already contained in the
  prestability condition.
  We claim that $R_{0} = R_{0, 0}$.
  Indeed, since $V\git_{\theta} G$ is projective,
  $R_{0} = \mathbb C[V]^{G}$ is a finite $\mathbb C$-algebra.
  Suppose $f\in R_{0,c\epsilon}$ for $c\neq 0$, then for sufficiently
  large $n$, $f^n\in R_{0,nc\epsilon} = 0$. Since $V$ is reduced and
  $R_{0}\subset \mathbb C[V]$, we have $f=0$.
  Now we have proved the claim. Applying the previous two lemmas finishes the proof.
  % Since $V\git_{\theta} G$ is projective,
  % $R_{0\theta} = \mathbb C[V]^{G}$ is a finite $\mathbb C$-algebra.
  % Since $V$ is reduced, so is $R_{0\theta}$. Hence $R_{0\theta}$ consists of locally constant
  % functions, and $R_{0\theta} = R_{0\theta, 0\epsilon}$. Thus the previous two
  % lemmas apply and the Corollary follows.
\end{proof}

\section{Examples}
\label{sec:examples}
\subsection{Projective space with various $R$-charges}
\label{sec:projective-space-example}

This example is crucial to the proof of the main theorems in this paper.

We first fix an $R$-charged package for $\PP^{N-1}$.
Let $\Gamma = (\mathbb C^*)^{\times 2}$ and let
$G = \mathbb C^* \le \Gamma$ be the first factor;
let $V = \CC^N$ on which $\Gamma$ acts by weights
\[
  \begin{bmatrix}
    1 & \cdots & 1 \\
    c_1 & \cdots & c_N
  \end{bmatrix}.
\]
Let $\theta: G = \mathbb C^* \to \mathbb C^*$ be the identity map. Thus
$X = V\git_{\theta} G = \mathbb P^{N-1}$.
Let $\vartheta, \epsilon:\Gamma\to \CC\sta$ be the
projections onto the first and second factors, respectively.

\medskip
As in Section~\ref{sec:introduction}, principal $\mathbb C^*$-bundles are
equivalent to line bundles.
This way a prestable LG-quasimap to $X$ consists of
\[
  (\mathscr C, \Sigma^{\mathscr C}, \mathscr L, \varphi),
\]
where
\begin{itemize}
\item $(\mathscr C, \Sigma^{\mathscr C})$ is a pointed balanced twisted curve;
\item $\mathscr L$ is a representable line bundle on $\mathscr C$;
\item
  $\varphi = (\varphi_1 ,\ldots, \varphi_N) \in \Gamma(\mathscr C, \bigoplus_{i
    = 1}^N\mathscr L \otimes (\omega_{\mathscr C}^{\log})^{\otimes c_i})$, such
  that $\{\varphi = 0\}$ is finite and disjoint from the special
  points of $(\mathscr C, \Sigma^{\mathscr C})$.
\end{itemize}

Since $X$ is a scheme, by the representability assumption in
Definition~\ref{def:prestable-LG-quasimap},
$\mathscr C$ must also be a scheme (c.f.\ Lemma~\ref{lem:bounding-the-order-of-isotropy}).
Now let $x_1 ,\ldots, x_N$ be the standard coordinates on $\CC^{N}$.
Each $x_i$ has bi-degree $(1, c_{i})$;
$\mathrm{slope}(x_i) = c_i$. Let $S = \{x_1 ,\ldots, x_N\}$; fix
$A > s_{\mathrm{max}} := \max\{c_i\mid i = 1 ,\ldots, N\}$, and let $\Omega=(S,A,\vartheta)$.

Let $\mathrm{LGQ}^{\Omega}_{g,k}(\mathbb P^{N-1},d)$ be the moduli of
degree-$d$, $\Omega$-stable LG-quasimaps to $\mathbb
P^{N-1}$, where $d$ is by definition the degree of $\mathscr L$.
By definition, it parametrizes prestable LG-quasimaps
\begin{equation}
  \label{eq:LG-quasimap-to-P-N-1-in-properness-section}
  \xi = (\mathscr C, \Sigma^{\mathscr C}, \mathscr L, \varphi)
\end{equation}
that satisfies the following $\Omega$-stability condition (Definition~\ref{stability}):

\smallskip
\noindent
{\bf $\Omega$-stability for $\PP^{N-1}$.}{\sl
  \label{cnd:stability-P-N-1}
  For a prestable LG-quasimap $\xi$ to $\PP^{N-1}$,
  \begin{itemize}
  \item[$\Omega$-\textbf{2}.] for non-special $x\in \mathscr C$,
    $\min_{i=1}^N\{ \mathrm{ord}_x (\varphi_i) + c_i\} \leq A$,
  \item[$\Omega$-\textbf{3}.] the $\mathbb Q$-line bundle $\mathscr L \otimes
    (\omega_\sC^{\log})^{\otimes A}$ is positive.
  \end{itemize}
  Note that in this case $\Omega$-{1} of Definition~\ref{stability} follows from
  the prestability condition.
}
It carries a natural virtual fundamental class
constructed the same way as
in \cite{chang2012gromov}.
Since $\mathbb P^{N-1}$ is projective and $S$ is full, the moduli of $\Omega$-stable LG-quasimaps
of a fixed degree is proper. No cosection localization procedure is needed in this case.

\subsection{Complete intersections in projective spaces}
\label{sec:complete-intersection-example}
We consider a smooth complete intersection $Y$ of $s$ hypersurfaces of degrees $\ell_1
,\ldots, \ell_s$ in $\mathbb P^{N-1}$. Parallel to the quintic case,
the complete intersection can be realized as the critical locus of the function
\[
  \textstyle
  \sum_{i = 1}^s p_i F_i: X_+ = \mathrm{Tot}(\mathcal O_{\mathbb P^{N-1}}(-\ell_1) \oplus \cdots \oplus
  \mathcal O_{\mathbb P^{N-1}}
  (-\ell_s)) \longrightarrow \mathbb C,
\]
where $F_1 ,\ldots, F_s$ are the defining equations for $Y$ and $p_i$ is the
standard coordinate on the fibers of $\mathcal O_{\mathbb P^{N-1}}(-\ell_{i})$.
For an appropriately chosen $R$-charged package, writing $X_{+}$ as a GIT
quotient of $\mathbb C^{N+s}$ by $\mathbb C^*$, the theory of LG-quasimaps is a formal
generalization of the $p$-field construction in \cite{chang2012gromov}, and is
mentioned in \cite[\S7.2]{fan2017mathematical}. The
invariants are equivalent to the Gromov-Witten invariants or quasimap invariants
of $Y$ (c.f.\
\cite{chang2020invariants,chen2021fields}).
When $Y$ is Calabi-Yau (CY), this is the
so-called CY phase of the theory.

Varying the polarization for the GIT, we get another quotient
\[
  X_{-} = \mathrm{Tot}(\mathcal O_{\mathbb P(\ell_1, ,\ldots,
    \ell_{N})}(-1)^{\oplus N}),
\]
giving rise to the Landau-Ginzburg (LG) phase, generalizing Witten's $r$-spin
classes
\cite{witten1993matrix, chang2015witten}
and the Fan-Jarvis-Ruan-Witten
theory \cite{fan2013witten}. It was called hybrid Landau-Ginzburg/sigma model in
\cite{witten1993phases}.
When $\ell_1 = \cdots = \ell_s$, it was studied in
\cite{clader2017landau}. We consider the LG phase
for general $\ell_1 ,\ldots, \ell_s$.

Take $\Gamma = \mathbb C^*\times \mathbb C^*$ acting on $V = \mathbb C^N \times
\mathbb C^s$ by weights
\[
  \begin{bmatrix}
    1 & \cdots & 1 & -\ell_1 & \cdots & - \ell_s\\
    0 & \cdots & 0 & 1 & \cdots & 1
  \end{bmatrix}.
\]
Take $\epsilon: \Gamma \to \mathbb C^*$ to be the projection onto the
second factor, and thus $G = \ker(\epsilon)$ is the first
factor subgroup of $\Gamma$. Take $\vartheta:\Gamma\to \mathbb C^*$ to be the
inverse of the projection onto the first factor, and $\theta = \vartheta|_{G}$.

Let $x_1 ,\ldots, x_N, p_1 ,\ldots, p_s$ be the standard coordinates on $\mathbb
C^{N}\times \mathbb C^{s}$.
Thus the unstable locus is defined by $p_1 = \cdots = p_s = 0$, and $X =
[V^{\mathrm{s}}(\theta) / G]$ is the $X_{-}$ above.

Using the standard correspondence between line bundles and principal $\mathbb
C^*$-bundles as before, we can describe the moduli space as follows.
A prestable LG-quasimap to $X$ consists of
\[
  (\mathscr C, \Sigma^{\mathscr C}, \mathscr L, \varphi_1 ,\ldots, \varphi_N,
  \rho_1 ,\ldots, \rho_s),
\]
where
\begin{itemize}
\item $(\mathscr C, \Sigma^{\mathscr C})$ is a pointed twisted curve with
  balanced nodes;
\item $\mathscr L$ is a representable line bundle on $\mathscr C$;
\item $\varphi_i \in \Gamma(\mathscr C, \mathscr L)$, $i=1 ,\ldots, N$;
\item $\rho_i \in \Gamma(\mathscr C, \mathscr L^{\otimes (-\ell_i)} \otimes
  \omega_{\mathscr C}^{\log})$, $i=1 ,\ldots, s$, such that the common zeros of
  $(\rho_1 ,\ldots, \rho_s)$ are finite and disjoint from the nodes and
  markings.
\end{itemize}

Now we come to the $\Omega$-stability condition in Definition~\ref{stability}.
Note that the function $p_i$ has bi-degree $(\ell_i, 1)$, $i=1 ,\ldots, s$.
Let $\ell_{\max}$ and
$\ell_{\mathrm{min}}$ be the maximum and minimum of $\ell_1 ,\ldots, \ell_s$,
respectively. Take
\[
  S = \{p_1 ,\ldots, p_s\}.
\]
Then
\[
  \max\{\mathrm{slope}(f) \mid f \in S\} = \frac{1}{\ell_{\mathrm{min}}}.
\]
\ifdefined\SHOWOLDSTABILITY {

  TO BE OMITTED

  Used to be:
  \[
    S = \{p_1 ,\ldots, p_s\}, \quad C = \frac{1}{\ell_{\mathrm{min}}}.
  \]
} \fi For any $A > \frac{1}{\ell_{\mathrm{min}}}$, \ifdefined\SHOWOLDSTABILITY {

  TO BE OMITTED

  Used to be: $\epsilon\in \mathbb Q_{>0}$ } \fi
the $\Omega = (S,A,\vartheta)$-stability
condition  reads:
\begin{itemize}
\item[$\Omega$-\textbf{2}.] for any point $x\in \mathscr C$ that is not a node or marking,
  \[
    \min\{ \frac{{\mathrm{ord}}_x (\rho_i)}{\ell_i} + \frac{1}{\ell_i} \mid i =
    1 ,\ldots, N\} \leq A,
  \]
  \ifdefined\SHOWOLDSTABILITY {

    TO BE OMITTED

    Used to be:
    \[
      \min\{ \frac{{\mathrm{ord}}_x (\rho_i)}{\ell_i} + \frac{1}{\ell_i} -
      \frac{1}{\ell_{\mathrm{min}}} \mid i = 1 ,\ldots, N\} \leq
      \frac{1}{\epsilon},
    \]
  } \fi
\item[$\Omega$-\textbf{3}.] the $\mathbb Q$-line bundle $\mathscr L^{\otimes (-1)} \otimes
  (\omega_{\mathscr C}^{\log})^{\otimes A}$
  \ifdefined\SHOWOLDSTABILITY {

    TO BE OMITTED

    Used to be: $(\mathscr L \otimes \omega_{\mathscr C}^{\log, \otimes
      (1/\ell_{\mathrm{min}})})^{\otimes \epsilon} \otimes
    \omega^{\log}_{\mathscr C}$} \fi is positive,
\end{itemize}
in addition to the prestability condition above, which already covers
$\Omega$-{1} of Definition~\ref{stability}.

Theorem~\ref{thm:intro-separated-Deligne--Mumford-finite-type} implies that
the moduli of $\Omega$-stable LG-quasimaps to $X$ of a fixed degree is a separated
DM stack of finite type.
Replacing $V$ by $\mathbb C^{s}$, viewed as the subspace of $\mathbb C^{N+s}$
defined by $x_1 = \cdots = x_N= 0$,
Theorem~\ref{thm:intro-properness} implies that the substack where the
$\varphi_1 ,\ldots, \varphi_N$ are identically zero is proper.
Note that ${\mathrm{ord}}_x
(\rho_i)$ is always an integer. Hence, when $\ell_{\max} < 2\ell_{\min}$ and $A
\to (\frac{1}{\ell_{\mathrm{min}}})^+$,
$\Omega$-2 is equivalent to that $\rho_1
,\ldots, \rho_s$ does not have a common zero at any $x$.
In particular, the stability in \cite{clader2017landau} is the special case
given by $\ell_1 = \cdots = \ell_s$ and $A \to (\frac{1}{\ell_{\mathrm{min}}})^+$.

\subsection{Comparison with \cite{fan2017mathematical}}
\label{sec:comparison-to-FJR}
We first recall the setup and the main results regarding the geometry of the
moduli spaces in \cite{fan2017mathematical}.

We denote the $V$ in \cite{fan2017mathematical} by $V_{\mathrm{FJR}}$, which is
always assumed to be a vector space.
Indeed, our $V$ will be taken to be
$V_{\mathrm{FJR}}$ or more generally any $Z \subset V_{\mathrm{FJR}}$, to be introduced below.

In \cite{fan2017mathematical}, the input data for the moduli space consists of a
finite dimensional vector space $V_{\mathrm{FJR}}$, two closed subgroups $G, \mathbb
 C^*_{R} \leq GL(V_{\mathrm{FJR}})$, a character $\theta\in \widehat{G}$, such that
\begin{itemize}
\item $(G,V_{\mathrm{FJR}}, \theta)$ satisfies the same assumptions as the
  $(G,V,\theta)$ in this paper;
\item
  $\mathbb C^*_{R}$ is a copy of $\mathbb C^*$, and
  the $\mathbb C^*_{R}$-action on $V_{\mathrm{FJR}}$ commutes with the
  $G$-action, and $G\cap \mathbb C^*_{R}$ is finite.
\end{itemize}
Then for any $G\times \mathbb C^*_R$-invariant subvariety $Z\subset G$ such that
$Z^{\mathrm{s}}(\theta) \neq \emptyset$,
\cite{fan2017mathematical} considered the moduli of prestable LG-quasimaps to
$[Z^{\mathrm{s}}(\theta) /G]$.

Now we introduce the $R$-charged package using our notation:
\begin{itemize}
\item
  $V = Z$;
\item
  $\Gamma\subset GL(V_{\mathrm{FJR}})$ is the subgroup generated by $G$ and
  $\mathbb C^*_{R}$, which naturally act on $V_{\mathrm{FJR}}$;
\item
  $\epsilon: \Gamma \to \mathbb C^*$ is the unique group homomorphism such that
  $\ker(\epsilon) = G$ and $\epsilon|_{\mathbb C^*_R}$ is of the form $z \mapsto
  z^{k}$, $z\in \mathbb C_R^*$, for some $k \in \mathbb Z_{>0}$.
\end{itemize}
For any $\vartheta \in \widehat{\Gamma}$ that is a lift of $\theta$, i.e.,
$\vartheta|_{G} = \theta$, the notion of
prestable LG-quasimaps in this paper coincides with
\cite[Definition~4.2.2]{fan2017mathematical}, which actually does not depend on
the choice of $\vartheta$.

We now compare the stability conditions.
In short, the stability condition of \cite{fan2017mathematical} are
 two special cases of the $\Omega$-stability condition of this paper,
\begin{itemize}
\item
  either when $S$ is full (c.f.~\eqref{eq:being-full}) and every $f \in S$ has slope zero.
\item
  or $A$ tends to $\infty$.
\end{itemize}
The first case is when there exists a ``good lift''. This means the
$\vartheta$-semistable locus coincides with the $\theta$-semistable locus
\cite[p.~237]{fan2017mathematical}.
Good lifts do not always exist. For example, they do not exist for
the Mixed-Spin-P fields
discussed in Section~\ref{sec:brief} or
the LG-phase
of complete intersections of unequal degrees discussed in
Section~\ref{sec:complete-intersection-example}.

Suppose $\vartheta$ is a good lift, we recover the stability
\cite{fan2017mathematical} as follows. Take $S \subset \bigcup_{k\geq 1} R_{k, 0}$  to be
any finite subset of nonzero homogeneous elements that generates
the same ideal as $\bigcup_{k\geq 1} R_{k, 0}$.
Then $S$ is full since
 $\vartheta$ is a good lift.
 For $\epsilon \in \mathbb Q_{>0}$, set $A = \frac{1}{\epsilon}$.
 Then the $(\epsilon, \vartheta)$-stability defined in
 \cite[Definitions~4.2.11,~4.2.13]{fan2017mathematical} is equivalent to the
 $\Omega = (S,A,\vartheta)$-stability in Definition~\ref{stability}.

When there is no good lift, \cite{fan2017mathematical}
 also defined the $(\epsilon = 0^+)$-stability
condition which disallows any rational tails but has no restriction on the
length of base points. It coincides with the $\Omega$-stability
condition for $A\gg 0$ (c.f.\ Corollary~\ref{cor:stability-condition-A-infinity}).
This is in contrast to the theory of stable MSP fields, where no base points are allowed.
The nonvanishing of $(\mu, \nu)$ for stable MSP fields plays an essential role
in separating the ``Feynman sum part'' and ``holomorphic ambiguity part'' in
the application of MSP theory to higher-genus Gromov-Witten invariants.

\subsection{Stable quasimaps}
\label{sec:stable-quasimaps-as-a-special-case}
Given $(G,V,\theta)$ as in the quasimap setting, take $\Gamma = G \times \mathbb
C^*$ and let $\Gamma$ act on $V$ via its projection onto $G$. Take $\vartheta$
to be the pullback of $\theta$. Thus $\mathbb C^*$ acts on $V$ trivially and all
the nonzero functions have $R$-charge $0$. Take any $S$ to be any finite set of
nonzero homogeneous generators for the ideal of the $\theta$-unstable locus and
take $A = \frac{1}{\epsilon}$, $\epsilon \in \mathbb Q_{>0}$. It is easy to see
that we recover the theory of $\epsilon$-stable quasimaps to $X$ in
\cite{ciocan2014stable, cheong2015orbifold}.

\section{Proof of main theorems: the projective space case}
\label{sec:projective-space-case}
By a standard argument, the moduli of prestable LG-quasimaps is an Artin stack,
locally of finite presentation over $\mathbb C$. Indeed, by
\cite[Lemma~2.8]{cheong2015orbifold} (c.f.\
\cite{lieblich2006remarks,abramovich2011twisted,ollson2006hom}),
the category of representable principal
$\Gamma$-bundles on twisted curves is an Artin stack locally of finite presentation over
$\mathbb C$. Then one may embed $V$ into a linear representation and use the
cone construction in  \cite{chang2012gromov}.

In this section we prove
Theorems~\ref{thm:intro-separated-Deligne--Mumford-finite-type}
and~\ref{thm:intro-properness} when $X$ is a projective space with the $R$-charged
package described in Section~\ref{sec:projective-space-example}. We will reduce
the general case to this special case in the next section.

We follow the setup in Section~\ref{sec:projective-space-example}.

\subsection{It is a Deligne-Mumford stack}
\begin{lemm}
  \label{lem:finite-automorphisms-P-N-1-case}
  For any $\xi\in \mathrm{LGQ}^{\Omega}_{g,k}(\mathbb P^{N-1},d)(\mathbb C)$, $\mathrm{Aut}(\xi)$ is finite.
\end{lemm}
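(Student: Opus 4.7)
The plan is to show that $\mathrm{Aut}(\xi)$, which is an affine group scheme of finite type over $\mathbb C$, has trivial identity component and is therefore finite. Equivalently, the strategy is to rule out any nontrivial connected one-parameter subgroup $H$, isomorphic to $\mathbb G_m$ or $\mathbb G_a$, inside $\mathrm{Aut}(\xi)$.

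Since $X = \mathbb P^{N-1}$ is a scheme, Lemma~\ref{lem:bounding-the-order-of-isotropy} forces the twisted curve $\mathscr C$ underlying $\xi$ to be an ordinary nodal scheme. An automorphism of $\xi$ is then a pair $(\sigma,\alpha)$ with $\sigma\in\mathrm{Aut}(\mathscr C,\Sigma^{\mathscr C})$ and $\alpha\colon \sigma^{\ast}\mathscr L\to\mathscr L$ an isomorphism carrying each $\sigma^{\ast}\varphi_i$ to $\varphi_i$. Because prestability produces a nonzero $\varphi_i$ on every connected component of $\mathscr C$, the gauge $\alpha$ is uniquely determined by $\sigma$, so the forgetful map $\mathrm{Aut}(\xi)\to\mathrm{Aut}(\mathscr C,\Sigma^{\mathscr C})$ is injective. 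It therefore suffices to rule out such an $H$ acting nontrivially on $\mathscr C$.

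Suppose such an $H$ acts nontrivially on an irreducible component $\mathscr B\cong \mathbb P^1$. Since $H$ fixes every special point of $\mathscr B$, and a nontrivial one-parameter subgroup of $\mathrm{PGL}_2$ has at most two fixed points on $\mathbb P^1$, the component $\mathscr B$ has at most two special points; hence $\deg(\omega_{\mathscr C}^{\log}|_{\mathscr B})\le 0$, and $\Omega$-3 then gives $e := \deg(\mathscr L|_{\mathscr B}) > -A\cdot \deg(\omega_{\mathscr C}^{\log}|_{\mathscr B})\ge 0$. The $H$-equivariant structure on $\mathscr L$ (which exists because $H$ lifts to $\xi$), combined with the natural $H$-linearization on $\omega_{\mathscr C}^{\log}$, induces one on each $\mathscr M_i := \mathscr L\otimes (\omega_{\mathscr C}^{\log})^{\otimes c_i}$; the invariance of $\varphi_i$ then says that $\varphi_i|_{\mathscr B}$ is a weight-$0$ eigenvector. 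On $\mathbb P^1$ such eigenvectors are monomials in coordinates adapted to the fixed points of $H$, so the zero locus of $\varphi_i|_{\mathscr B}$ lies in the $H$-fixed points of $\mathscr B$.

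A case analysis on the number of special points of $\mathscr B$ then closes the argument. With two special points (both $H$-fixed) one has $\deg(\omega_{\mathscr C}^{\log}|_{\mathscr B}) = 0$ and $e \geq 1$, so every nonzero invariant $\varphi_i|_{\mathscr B}$ is a monomial of positive degree; the common vanishing places a special point in the base locus, violating prestability. With one special point, if $H = \mathbb G_a$ the unique fixed point is the special one and every $\varphi_i$ vanishes there, again violating prestability; if $H = \mathbb G_m$, tracking the fiber weights of $\mathscr L$ and $\omega_{\mathscr C}^{\log}$ shows that the simultaneous invariance of all $\varphi_i$ forces a unique linearization under which the non-special fixed point $x$ lies in the base locus with $\min_i\{\mathrm{ord}_x(\varphi_i) + c_i\} = e$, contradicting $\Omega$-2 because $\Omega$-3 gives $e > A$. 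With no special points, $\Omega$-3 gives $e > 2A$, and a parallel computation at the one or two fixed points of $H$ yields $\min_i\{\mathrm{ord}_x(\varphi_i) + c_i\} \geq e - s_{\max} > A$, again contradicting $\Omega$-2. The main bookkeeping obstacle is the precise fiber-weight computation for $\mathscr L$ and $\omega_{\mathscr C}^{\log}$ under $H$ in the one- and zero-special-point cases; once the natural linearization on $\omega_{\mathscr C}^{\log}$ is pinned down, each case falls out of the comparison between the inequalities in $\Omega$-2 and $\Omega$-3.
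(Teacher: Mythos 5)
Your proof is essentially correct but follows a genuinely different route from the paper's. You classify the possible one-parameter subgroups $\mathbb G_m,\mathbb G_a$ of $\mathrm{Aut}(\xi)$ and kill each one by a fixed-point/linearization analysis, deriving contradictions with $\Omega$-2 and $\Omega$-3 at the $H$-fixed points. The paper instead works directly with the invariant finite set $D=\bigcup_i(\varphi_i=0)^{\mathrm{red}}$ on a $\mathbb P^1$ with few special points: if $\#D>2$ faithfulness on $D$ finishes the argument, $\#D\le 1$ is excluded by playing $\Omega$-2 against $\Omega$-3 exactly as in your one-fixed-point computation, and for $\#D=2$ the paper multiplies each $\varphi_i$ by $(\mathrm{d}z/z)^{m(A-c_i)}$ to turn all fields into rational sections of the single line bundle $\mathscr L^{\otimes m}\otimes(\omega^{\log}_{\mathscr C})^{\otimes mA}$ and shows they are not all proportional. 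The paper's trick avoids choosing a linearization and sidesteps the weight bookkeeping that occupies the second half of your argument; your approach is more systematic and makes the role of the fixed-point weights explicit (your identity $\mathrm{ord}_x(\varphi_i)+c_i=\mu_x(\mathscr L)$, independent of $i$, is exactly the mechanism behind the paper's $\#D\le 1$ computation).

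Two soft spots. First, your opening reduction --- ``rule out $\mathbb G_m$ and $\mathbb G_a$ subgroups'' --- presupposes that $\mathrm{Aut}(\xi)$ is affine. This silently discards the case of a smooth genus-one component with no special points, where $\mathrm{Aut}(\mathscr C,\Sigma^{\mathscr C})^\circ$ is the elliptic curve itself and contains no copy of $\mathbb G_m$ or $\mathbb G_a$. That case is easy ($\Omega$-3 forces $\deg\mathscr L>0$, so $D$ is a nonempty finite invariant set on which translations act freely), and the paper also relegates it to ``the other cases are similar,'' but your framework needs the extra sentence whereas the paper's $D$-based framework covers it automatically. Second, in the zero-special-point $\mathbb G_m$ case your claimed bound $\min_i\{\mathrm{ord}_x(\varphi_i)+c_i\}\ge e-s_{\max}$ at each fixed point does not follow from the weight computation (it is the right bound only in the $\mathbb G_a$ case, where the whole zero divisor sits at one point); what the computation gives is that the two quantities $\min_i\{\mathrm{ord}_0(\varphi_i)+c_i\}$ and $\min_i\{\mathrm{ord}_\infty(\varphi_i)+c_i\}$ are the fiber weights $\mu_0(\mathscr L)$ and $-\mu_\infty(\mathscr L)$ and sum to $e>2A$, so $\Omega$-2 fails at one of the two fixed points. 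Both issues are repairable without changing your strategy.
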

\begin{proof}
  First observe that $\mathrm{Aut}(\xi)$ acts faithfully on $(\mathscr C,
  \Sigma)$, since on each irreducible component of $\mathscr C$ at least one $\varphi_i$ is nontrivial.
  Dropping those
  $\varphi_i$ that are identically zero, without loss of generality we assume
  none of the $\varphi_i$'s are identically zero.

  Let us consider the case $(\mathscr C, \Sigma^{\mathscr C}) \cong (\mathbb
  P^1, \emptyset)$. Set $\textstyle D = \bigcup_{i=1}^N (\varphi_i =
  0)^{\mathrm{red}} \subset \mathscr C$.  If $\# D > 2$, then it is clear that
  $\mathrm{Aut}(\xi)$ is finite, since $D$ is left invariant by
  $\mathrm{Aut}(\xi)$ and $\mathrm{Aut}(\xi)$ acts faithfully on $\mathscr C$.
  Thus we assume $\#D \leq 2$ from now on.

  We first show that $\#D = 2$.
  Indeed, if not, say $D\subset \{x\}$, then for each $i$,
  \[
    \mathrm{ord}_{x} \varphi_i
    = \deg(\mathscr L \otimes (\omega_{\mathscr C}^{\log})^{\otimes c_i})
    = \deg(\mathscr L) - 2 c_i.
  \]
  By $\Omega$-2, for some $i_0$ we have $\mathrm{ord}_{x} \varphi_{i_0}  +
  c_{i_0} \leq A$.
  Hence $\deg(\mathscr L) \leq A + c_{i_0}$.
  But $\Omega$-3 says $\deg(\mathscr L \otimes (\omega^{\log}_{\mathscr
    C})^{\otimes A}) > 0$,
  which is $\deg(\mathscr L) > 2 A$.
  Thus $c_{i_0} > A$, contradicting to our assumption that $A > \max_{i} \{c_i\}$.
  This proves the claim.

  We next show that $\#D = 2$ also violates the stability.
  Let $z$ be the standard coordinate on $\mathscr C \cong \mathbb P^1$,
  and we may assume $D = \{0,\infty\}$.
  Fix $m\in \mathbb Z_{>0}$ such that $mA \in \mathbb Z$ and set
  \[
    \phi_i = \varphi^m_i \cdot (\frac{\operatorname{d}\! z}{z})^{m(A - c_i)},
    \quad i = 1 ,\ldots, N.
  \]
  They are all rational sections of the same line bundle
  \[
    \mathscr N : = \mathscr L^{\otimes m}
    \otimes (\omega_{\mathscr C}^{\log})^{\otimes mA}.
  \]

  Since $\mathrm{Aut}(\mathscr C, (0,\infty)) \cong \mathbb C^*$ leaves
  $\frac{\operatorname{d}\! z}{z}$ invariant, we see that the identity component
  $\mathrm{Aut}^\circ(\xi)$ of $\mathrm{Aut}(\xi)$ is a subgroup of
  \begin{equation}
    \label{eq:aut-phi}
    \mathrm{Aut}(\mathscr C, (0,\infty), \mathscr N, \phi_1 ,\ldots, \phi_N).
  \end{equation}
  Hence we only need to show \eqref{eq:aut-phi} is a finite group. For this,
  it suffices to show that all the $\phi_i$'s are
  not all proportional to each other.
  Indeed,
  by $\Omega$-3,
  \[
    \deg(\mathscr N) = m \deg(\mathscr L \otimes (\omega_{\mathscr
      C}^{\log})^{\otimes A}) > 0.
  \]
  Thus if all the $\phi_i$'s are proportional to each other, they must have a common zero.
  However, by $\Omega$-2, for $x = 0$ we have
  \[
    \min_i \{\mathrm{ord}_x \phi_i \} = m \min_{i}\{\mathrm{ord}_{x}\varphi_i + c_i\}
    - mA \leq 0.
  \]
  This means that at $x = 0$, at least one $\phi_i$ is nonzero or has a pole.
  And the same holds true for $x = \infty$.
  Thus the proof of the lemma is complete in the case
  $(\mathscr C, \Sigma^{\mathscr C}) \cong (\mathbb
  P^1, \emptyset)$. The other cases are similar and we leave them to the reader.
\end{proof}
\begin{lemm}
  \label{lem:DM-type-projective-space-case}
  The stack $\mathrm{LGQ}^{\Omega}_{g,k}(\mathbb P^{N-1},d)$ is a DM stack of finite type.
\end{lemm}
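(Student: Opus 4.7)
The plan is to exhibit $\mathrm{LGQ}^{\Omega}_{g,k}(\mathbb P^{N-1},d)$ as an open substack of the Artin stack of prestable LG-quasimaps, and then to verify separately the two conditions that upgrade ``Artin, locally of finite type'' to ``DM, of finite type'': finite stabilizers and quasi-compactness.

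First, the opening paragraph of Section~\ref{sec:projective-space-case} already exhibits $\mathrm{LGQ}^{\mathrm{pre}}_{g,k}(\mathbb P^{N-1},d)$ as an Artin stack locally of finite presentation over $\mathbb C$, built out of the stack of representable principal $\Gamma$-bundles on twisted curves together with a cone construction for the sections $\varphi_i$. By Lemma~\ref{lem:openness}, $\Omega$-stability is an open condition, so $\mathrm{LGQ}^{\Omega}_{g,k}(\mathbb P^{N-1},d)$ is an open substack, hence itself Artin and locally of finite presentation over $\mathbb C$.

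Next I would address the DM condition. Lemma~\ref{lem:finite-automorphisms-P-N-1-case} already shows that every geometric point $\xi$ has finite automorphism group. Since we work in characteristic zero, every finite group scheme over $\mathbb C$ is étale, so the inertia stack is unramified and the stack is DM.

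The remaining and most substantial task is quasi-compactness, for which I would invoke the boundedness lemma proved just before Corollary~\ref{cor:stability-condition-A-infinity}. Applied in the setting of Section~\ref{sec:projective-space-example}, where $u^*L_{\vartheta} = \mathscr L$, that lemma produces constants $C_1$ and $C_2$, depending only on $S,\vartheta,g,k,d$, such that every object $\xi=(\mathscr C,\Sigma^{\mathscr C},\mathscr L,\varphi)$ in the moduli satisfies $\#\Irre(\xi)\le C_1$ and $|\deg(\mathscr L|_{\sB})|\le C_2$ for every $\sB\in\Irre(\xi)$. The moduli of balanced twisted curves of genus $g$ with $k$ markings, at most $C_1$ irreducible components, and with the order of every orbifold isotropy group bounded by the $N_0$ of Lemma~\ref{lem:bounding-the-order-of-isotropy}, is a finite-type Artin stack (using the standard results on twisted curves, e.g.\ of Olsson). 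Over its universal curve, the relative Picard stack of representable line bundles of bounded degree on each component is again of finite type, and the data $(\varphi_1,\ldots,\varphi_N)$ is a section of a coherent sheaf of bounded rank, so forms a finite-type affine bundle. Cutting out the prestability locus and then the $\Omega$-stable locus as open substacks preserves finite type, yielding the required smooth quasi-compact atlas.

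The main delicate point I expect is the bookkeeping of the twisted-curve stratification: one must know that imposing simultaneous bounds on genus, number of markings, number of irreducible components, and order of isotropy groups indeed cuts out a finite-type substack of the full stack of prestable twisted curves, and then that the Picard over a finite-type family of twisted curves decomposes into finite-type pieces indexed by bounded component-wise degrees. Once this is in place, the cohomology-and-base-change step for $\Gamma(\mathscr C,\mathscr L\otimes(\omega_{\mathscr C}^{\log})^{\otimes c_i})$ is routine, and the lemma follows by combining openness of $\Omega$-stability with the above finite-type atlas.
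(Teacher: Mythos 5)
Your proposal is correct and follows essentially the same route as the paper: the paper's proof deduces the DM property from the finite-automorphism lemma (Lemma~\ref{lem:finite-automorphisms-P-N-1-case}) and dismisses the finite-type part as ``standard, using $\Omega$-3,'' which is precisely the boundedness argument you spell out. The only remark is that in the projective-space case the curves are in fact schemes (the $\mathbb C^*$-action on $\mathbb C^N\setminus\{0\}$ is free, so $N_0=1$), which slightly simplifies your twisted-curve bookkeeping.
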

\begin{proof} Being a DM stack follows
  from the finiteness of automorphism
  groups (c.f.~\cite[Remark~8.3.4]{olsson2016algebraic}); the finite type part is
  standard, using $\Omega$-{3}.
\end{proof}
\subsection{Introducing curves with nonreduced components}
\label{sec:nonreduced-fiber}
We will use the valuative criterion to prove the properness of the
moduli.
To this end, we introduce more notation.
Let $T$ be the spectrum of a
discrete valuation ring with residue field $\CC$ and fractional field $K$.
We denote by $\eta$ and $\zeta$ the generic and closed points of $T$,
respectively, and let $t$ be a uniformizer of $T$.

As an intermediate step of the proof, we
introduce flat families of pointed curves
$\pi: \Sigma^{\mathscr C} \subset \mathscr C \to T$ with possibly nonreduced special fiber.
More precisely, when $\mathscr C_{\zeta}$ is nonreduced, we require
\begin{itemize}
\item[(C1)] $\mathscr C$ is a regular surface. In particular $\mathscr C_{\eta} \to \eta$ is smooth.
\item[(C2)]
  Let $q\in \sC_{\zeta}$ be a singular point of $(\mathscr
  C_{\zeta})^{\mathrm{red}}$ and $\hat \sC$ be the formal completion
  along $q$, then $\hat\sC\cong \operatorname{Spec} \mathbb C
  \llbracket  x,y,t  \rrbracket /(x^ay^b-t)$, $a,b >0$.
\end{itemize}

We call such a  $q$ a node of $\sC$.
Let $\Si^{\sC}\subset \sC$ be the markings, by definition it is a disjoint union of
sections of $\pi$, and thus is disjoint from the non-reduced components of $\sC_{\zeta}$.

In this situation, we will replace $\omega_{\mathscr C}$ by
$\Omega_{\sC/T}(\log \sC_\zeta)$, the sheaf of relative log-differentials of
$(\sC,\sC_\zeta)$ relative to $(T, \zeta)$.
Namely, we redefine
\[
  \omega_{\sC/T}^{\log}:=
  \Omega_{\sC/T}(\log \sC_\zeta) \otimes \mathcal O_{\mathscr C} (\Sigma^{\mathscr C})
  = \Omega_{\sC/T}(\log \sC_\zeta+\Sigma^{\mathscr C}).
\]
Thus, we will have
\[
  \varphi_i \in H^0(\mathscr C, \mathscr L \otimes
  (\omega^{\log}_{\sC/T})^{\otimes c_i})
  =
  H^0(\mathscr C, \mathscr L \otimes (\Omega_{\sC/T}(\log \sC_\zeta +
  \Sigma^{\mathscr C}))^{\otimes c_i}).
  \footnote{Recall that $\Omega_{\sC/T}(\log \sC_\zeta)$
    is the sheaf of meromorphic differentials on $\mathscr C$ with at worst log-poles along the
    irreducible components of $\sC_{\zeta}$, modulo $\pi^* \operatorname{d}\! \log t$.
    In the local model $\hat\sC\cong \operatorname{Spec} \mathbb C \llbracket  x,y,t\rrbracket/(x^ay^b-t)$
    with $a$, $b>0$,
    $\omega^{\log}_{\sC/T}|_{\hat\sC}$ is generated by $a \operatorname{d}\!
    \log x = -b \operatorname{d}\! \log y$.
    In case $\mathscr C_{\zeta}$ is reduced, $\omega^{\log}_{\sC/T}$ is the usual
    $\omega^{\log}_{\sC/T}$. Hence the new definition is compatible with the old one.}
\]

We now study the change of $\Omega_{\sC/T}(\log \sC_\zeta)$ under blowups. Let
$\pi: \Sigma^{\mathscr C} \subset \mathscr C \to T$ satisfy (C1) and (C2) as above,
and let $f: \mathscr C^\prime \to \mathscr C$ be the blowup at a closed point
$q\in \mathscr C_{\zeta}$. Let $\mathscr E$ be the exceptional divisor and
$\Sigma^{\mathscr C^{\prime}}$ be the proper transform of $\Sigma^{\mathscr C}$.
Consider the natural isomorphism
\[
  df_\eta: f^*\omega^{\log}_{\sC_\eta/\eta}\lra
  \omega^{\log}_{\sC'_\eta/\eta}.
\]

\begin{lemm} \label{lem:Omega-log-under-blowup}
  If $q$ is a marking or node, then $df_{\eta}$ extends to an isomorphism
  $df: f^*\omega^{\log}_{\sC/T}\to \omega^{\log}_{\sC'/T}$.
  Otherwise,  it extends to an isomorphism
  $df: f^*\omega^{\log}_{\sC/T} \to \omega^{\log}_{\sC'/T}(- \mathscr E)$.
\end{lemm}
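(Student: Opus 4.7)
The assertion is \'etale-local on $\sC$ near $q$, so my plan is to pass to the formal completion $\hat\sC_q$ and treat three separate cases: $q$ a node of $\sC_\zeta^{\mathrm{red}}$, a marking, or an interior smooth point of $\sC_\zeta^{\mathrm{red}}$. In each case a direct local computation using (C1) and (C2) shows that $\omega^{\log}_{\sC/T}$ and $\omega^{\log}_{\sC'/T}$ are locally free of rank one, and the trace map $df$ on relative log differentials extends $df_\eta$ by the functoriality of $\Omega(\log-)$, so it suffices to track what $df$ does to a local generator.

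For $q$ a node, I use the local model $\hat\sC_q\cong\mathbb C\llbracket x,y,t\rrbracket/(x^ay^b-t)$ from (C2), in which $\omega^{\log}_{\sC/T}$ is generated by $a\operatorname{d}\!\log x=-b\operatorname{d}\!\log y$. Blowing up $(x,y)$, in the chart with coordinates $(x,u)$, $u=y/x$, one has $t=x^{a+b}u^b$, so the relation $(a+b)\operatorname{d}\!\log x+b\operatorname{d}\!\log u\equiv 0$ makes $\omega^{\log}_{\sC'/T}$ locally generated by $\operatorname{d}\!\log x$, and $df$ sends $a\operatorname{d}\!\log x$ to a unit multiple of this generator; the other chart is symmetric. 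For $q$ a marking, using that markings avoid nonreduced components I pick coordinates with the marking $=\{x=0\}$ and $t=y$ (after a unit); after blowing up the origin, the chart $(y,v)$ with $x=vy$ contains the proper transform of the marking as $\{v=0\}$, $\omega^{\log}_{\sC'/T}$ is generated by $\operatorname{d}\!\log v$ (since $\operatorname{d}\!\log t=\operatorname{d}\!\log y\equiv 0$), and $df(\operatorname{d}\!\log x)=\operatorname{d}\!\log v+\operatorname{d}\!\log y\equiv\operatorname{d}\!\log v$ is a generator; the other chart contains no marking and is easier.

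For $q$ an interior point, pick coordinates with the local component of $\sC_\zeta$ equal to $\{y=0\}$ and $t=y^a$, $a\geq 1$ its multiplicity. Then $\omega^{\log}_{\sC/T}$ is generated by $\operatorname{d} x$, since $a\operatorname{d}\!\log y\equiv 0$ forces $\operatorname{d}\!\log y\equiv 0$. Blowing up the origin, in the chart $(x,u)$ with $y=xu$ one has $t=x^au^a$, $\omega^{\log}_{\sC'/T}$ is generated by $\operatorname{d}\!\log x$, and $df(\operatorname{d} x)=\operatorname{d} x=x\operatorname{d}\!\log x$ vanishes to order exactly one along $\mathscr E=\{x=0\}$. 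The other chart $(y,v)$ with $x=vy$ gives $df(\operatorname{d} x)=v\operatorname{d} y+y\operatorname{d} v\equiv y\operatorname{d} v$, since $\operatorname{d} y=y\operatorname{d}\!\log y\equiv 0$, again vanishing to order one along $\mathscr E=\{y=0\}$. Hence $df$ factors as an isomorphism $f^*\omega^{\log}_{\sC/T}\to\omega^{\log}_{\sC'/T}(-\mathscr E)$.

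The main care needed throughout is the bookkeeping of multiplicities of $\sC'_\zeta$ in each chart of the blowup when $\sC_\zeta$ is already nonreduced: in every case one must verify that the single relation $\operatorname{d}\!\log t\equiv 0$ continues to cut out a rank-one locally free quotient of the log sheaf, and then read off the correct exponent of $\mathscr E$ in the target from the local form of the image of the generator.
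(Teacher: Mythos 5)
Your proof is correct and is precisely the local computation that the paper omits (its proof of Lemma~\ref{lem:Omega-log-under-blowup} reads only ``This follows from a local computation, and will be omitted''): in each of the three cases your chart-by-chart bookkeeping of the relation $\operatorname{d}\!\log t\equiv 0$ and of the image of the local generator checks out, including the order-one vanishing along $\mathscr E$ in the interior-point case. The only nitpick is that $df$ is the pullback of (log) differentials rather than a ``trace map,'' but this does not affect the argument.
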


\begin{proof}
  This follows from a local computation, and will be omitted.
\end{proof}

At some point, we will make the special fiber reduced by taking a finite base
change and normalization. We now study the change of $\Omega_{\sC/T}(\log
\sC_\zeta)$ in such procedure.
Let $T^\prime \to T$ be a finite cover totally ramified at $\zeta$, and let
$\eta^\prime$ (resp.~$\zeta^\prime$) be the generic (resp. closed) point of
$T^\prime$.

\begin{lemm}
  \label{lem:Omega-log-under-base-change-and-normalization}
  Suppose we have a commutative diagram
  \[
    \begin{tikzcd}
      \Sigma^{\mathscr C^\prime}\sub \mathscr C^\prime \ar[r,"f"]
      \ar[d,"\pi^\prime"']& \Sigma^\sC\sub \mathscr C \ar[d, "\pi"]\\
      T^\prime \ar[r] & T,
    \end{tikzcd},
  \]
  where $\pi^\prime$ is a family of pointed nodal (reduced) curves.
  Suppose $f$ is finite, $f|_{\mathscr C^\prime_{\eta^\prime}}$ is \'etale,
  and $f^{-1}(\Sigma^{\mathscr C}) = \Sigma^{\mathscr C^\prime}$,
  then the pullback of differentials extends to an isomorphism
  \[
    df: f^*\omega^{\log}_{\sC/T}\lra \omega_{\sC'/T^\prime}^{\log}.
  \]
\end{lemm}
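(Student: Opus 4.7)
The plan is to reduce to a local computation on $\sC'$. Since $df$ is already an isomorphism on $\sC'_{\eta'}$ by the étaleness hypothesis, and both $f^*\omega^{\log}_{\sC/T}$ and $\omega^{\log}_{\sC'/T'}$ are line bundles near any closed point of $\sC'_{\zeta'}$, it will suffice to verify that $df$ extends to an isomorphism at each closed point of $\sC'_{\zeta'}$. Since $\pi'$ is a pointed nodal family, each such point is either a smooth non-special point, a marking, or a node of $\sC'_{\zeta'}$, and I will treat the three cases separately.

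For a smooth non-special point of $\sC'_{\zeta'}$, I would choose local coordinates adapted to the reduced special fiber and use finiteness together with the relation $t = s^n$ on the base to show that $f$ sends a local generator of $\omega^{\log}_{\sC/T}$ to a local generator of $\omega^{\log}_{\sC'/T'}$ up to a unit. For a marking, the hypothesis $f^{-1}(\Sigma^{\sC}) = \Sigma^{\sC'}$ as subschemes combined with étaleness on the generic fiber forces $f$ to be unramified along the marking section, giving $f^* x = x' \cdot u$ for local defining equations $x$, $x'$ of the markings and a unit $u$; then $f^*(d\log x) = d\log x' + d\log u$ is manifestly a local generator of the target.

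The main obstacle is the node case. Locally at a node of $\sC'$ above a node of $\sC$, I would take local models $\operatorname{Spec} \mathbb C \llbracket x', y', s \rrbracket /(x'y' - s^m)$ and $\operatorname{Spec} \mathbb C \llbracket x, y, t \rrbracket /(x^a y^b - t)$, with $t = s^n$. The finite map $f$ near the node then takes the form $f^* x = (x')^{\alpha_1}(y')^{\beta_1} u_1$ and $f^* y = (x')^{\alpha_2}(y')^{\beta_2} u_2$ for units $u_1, u_2$. The relation $(f^* x)^a (f^* y)^b = s^n$ combined with $x' y' = s^m$ forces $a \alpha_1 + b \alpha_2 = a \beta_1 + b \beta_2 = n/m$ and $u_1^a u_2^b = 1$. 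A direct computation using $d \log x' + d \log y' = 0$ in $\omega^{\log}_{\sC'/T'}$ (which comes from $x' y' = s^m$ modulo the pullback of $d \log s$) then expresses $f^*(a \, d\log x)$ as an integer combination of $d \log x'$ modulo exact terms arising from the units; one verifies that the resulting element is a local generator of $\omega^{\log}_{\sC'/T'}$ and that $df$ respects the relation $a\, d\log x + b\, d\log y = 0$ on the source. The delicate point is to see that étaleness of $f$ on the generic fiber pins down the exponents $\alpha_i, \beta_i$ to their minimal admissible values, so that the arithmetic coefficient in front of $d\log x'$ comes out to a unit.
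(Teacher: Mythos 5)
The paper itself gives no argument here --- its ``proof'' is the single sentence ``This follows from the local computation'' --- so your proposal is supplying exactly the computation the authors omit, and its overall structure (reduce to the three local models at smooth points, markings, and nodes of $\sC'_{\zeta'}$; compare generators of the form $d\log x$, $dy$, $a\,d\log x=-b\,d\log y$ modulo the pullback of $d\log t = n\,d\log s$) is the right one. The smooth-point and marking cases go through as you describe, and your node computation correctly identifies both the constraint $a\alpha_1+b\alpha_2=a\beta_1+b\beta_2=n/m$ with $u_1^au_2^b=1$ (which is what makes $df$ well defined, i.e.\ compatible with the relation $a\,d\log x+b\,d\log y=0$) and the fact that $f^*(a\,d\log x)=a(\alpha_1-\beta_1)\,d\log x'+(\text{regular terms})$, so that everything reduces to $\alpha_1\neq\beta_1$.

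Two points need tightening. First, the ``delicate point'' you flag is resolved by \emph{finiteness} of $f$, not by \'etaleness on the generic fiber: if $\alpha_1=\beta_1$ (hence $\alpha_2=\beta_2$), then $f^*x$ and $f^*y$ are both of the form $s^{c}\cdot(\text{unit})$ in $\mathbb C\llbracket x',y',s\rrbracket/(x'y'-s^m)$, so $\sO_{\sC',p'}/(f^*x,f^*y)$ contains $\mathbb C\llbracket x',y'\rrbracket/(x'y',s^{\min})$ and is infinite-dimensional, contradicting quasi-finiteness at the node; \'etaleness alone would not exclude this, since a local isomorphism from the upstairs annulus into the downstairs generic fiber need not be surjective without properness. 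Second, your case division silently assumes that nodes of $\sC'_{\zeta'}$ lie over nodes of $(\sC_\zeta)^{\mathrm{red}}$ and smooth points over smooth points; this must be checked, because a node over a smooth point (or vice versa) would actually make $df$ fail to be surjective. It is again ruled out by finiteness together with generic \'etaleness (e.g.\ by the same fiber-dimension count, or by noting that a finite \'etale map cannot take the local annulus of a node onto a local disk or conversely). With these two uses of finiteness made explicit, your outline is a complete proof.
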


\begin{proof}
  This follows from the local computation.
\end{proof}
\subsection{Extending the fields to a given family of curves}
We continue to use the notation $T, \eta, \zeta$ as before.
Consider a family of pointed curves $\Sigma^{\mathscr C} \subset \mathscr C \to T$
such that $\Sigma^{\mathscr C}$ is contained in the relative smooth locus, and that
\begin{enumerate}
\item[Case] 1: when  $\mathscr C_{\zeta}$ is reduced,
  we require that
  $\mathscr C_{\eta}$ is smooth and $\mathscr C_{\zeta}$ is nodal.
\item[Case] 2: when $\mathscr C_{\zeta}$ is non-reduced,  we require (C1) and (C2) in the previous subsection,
\end{enumerate}

In both cases, we have defined $\omega^{\log}_{\mathscr C/T}$.
In either case, given a pair $(\mathscr L_{\eta}, \varphi_{\eta})$,
where $\mathscr L_{\eta}$ is an invertible sheaf of $\sO_{\sC_\eta}$-modules,
and $0 \neq \varphi_{\eta} \in H^0(\mathscr C_{\eta},
\bigoplus_{i=1}^{N}\mathscr L_{\eta} \otimes (\omega^{\log}_{\mathscr
C_{\eta}/\eta})^{\otimes c_i})$,
as the first step we would like to extend this pair to the pair $(\mathscr L,
\varphi)$,
where $\mathscr L$ is a line invertible sheaf of $\sO_{\sC}$-modules,
and $\varphi \in H^0(\mathscr C, \bigoplus_{i=1}^{N}\mathscr L \otimes
(\omega^{\log}_{\mathscr C/T})^{\otimes c_i})$, such that $(\varphi = 0)$ contains no irreducible component
of $\mathscr C_{\zeta}$.

\begin{lemm} \label{lem:extending-L-and-varphi}
  In either case, if the extension $(\mathscr L, \varphi)$ exists,
  it is unique up a unique isomorphism.
  In Case 2, the extension always exists.
\end{lemm}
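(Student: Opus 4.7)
The plan is to establish uniqueness in both cases first, and then existence in Case 2; both arguments lean on the fact that $\sC$ is a regular surface—this is assumption (C1) in Case 2, and in Case 1 it holds at every node of $\sC_\zeta$ via the standard local model $\mathrm{Spec}\,R[x,y]/(xy-t)$ and is automatic away from the nodes.

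For uniqueness, start with two extensions $(\sL,\varphi)$ and $(\sL',\varphi')$ of $(\sL_\eta,\varphi_\eta)$. The common restriction on $\eta$ provides a canonical rational isomorphism $\psi\colon \sL\to \sL'$, equivalently a nonzero rational section of $\sL'\otimes \sL^{-1}$ on $\sC$ whose divisor is supported on $\sC_\zeta$ and can be written $\sum_ia_iD_i$, with $D_i$ the irreducible components of $(\sC_\zeta)^{\mathrm{red}}$. The identity $\psi(\varphi)=\varphi'$ on $\eta$ persists as an equality of rational sections on $\sC$, yielding componentwise $\mathrm{ord}_{D_i}\varphi'_j = \mathrm{ord}_{D_i}\varphi_j + a_i$. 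Regularity of $\varphi'$ forces $a_i\ge -\min_j \mathrm{ord}_{D_i}\varphi_j = 0$ (using non-vanishing of $\varphi$ on $D_i$), while non-vanishing of $\varphi'$ on $D_i$ forces $\min_j(\mathrm{ord}_{D_i}\varphi_j+a_i)=0$, hence $a_i=0$ for every $i$. Thus $\psi$ is a genuine isomorphism, and it is the unique such isomorphism extending the identity on $\eta$, because the rational extension itself is unique.

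For existence in Case 2, I first extend $\sL_\eta$ to any line bundle $\sL_0$ on the regular surface $\sC$; this is possible because on a regular scheme Weil and Cartier divisors coincide, so the closure of a representative of $\sL_\eta$ does the job. Then $\varphi_\eta$ extends uniquely to a rational section $\tilde\varphi$ of $\bigoplus_i\sL_0\otimes(\omega^{\log}_{\sC/T})^{\otimes c_i}$. I set $a_i=\min_j \mathrm{ord}_{D_i}\tilde\varphi_j$, a finite integer because $\varphi_\eta\ne 0$, define $\sL = \sL_0\bigl(-\sum_i a_iD_i\bigr)$, and take $\varphi$ to be $\tilde\varphi$ tensored with the canonical rational section of $\sO(-\sum_i a_iD_i)$. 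By construction each component $\varphi_j$ has order $\mathrm{ord}_{D_i}\tilde\varphi_j - a_i \ge 0$ along $D_i$, with minimum over $j$ equal to $0$; hence $\varphi$ is a regular section that does not vanish on any $D_i$ and still restricts to $\varphi_\eta$ on $\eta$, since the modifications are supported on $\sC_\zeta$.

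The main delicacy I anticipate is the bookkeeping with the modified log-differentials $\omega^{\log}_{\sC/T}$ from Section~\ref{sec:nonreduced-fiber} in Case 2. One must verify that restriction to $\eta$ is compatible with the modified definition (immediate from the construction) and that the local model near a node of $\sC$, in which $\omega^{\log}_{\sC/T}$ is generated by $a\,\mathrm{d}\log x = -b\,\mathrm{d}\log y$, yields well-defined vanishing orders for $\tilde\varphi_j$ along the possibly non-reduced components $D_i$. Once these short checks are in place, the divisor-theoretic arguments above go through unchanged.
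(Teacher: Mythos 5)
Your proof is correct and follows essentially the same route as the paper's, which merely notes that $\mathscr C$ is normal in either case (so any two extensions of $\mathscr L_{\eta}$ differ by a divisor supported on $\mathscr C_{\zeta}$) and regular in Case 2 (so extensions exist), leaving the order-counting you spell out as ``obvious''. One small correction: in Case 1 the total space $\mathscr C$ need not be regular at the nodes (the local model can be $xy=t^{n}$), only normal --- but your uniqueness argument uses only normality (well-defined orders along the prime components $D_i$ and the fact that a rational section with trivial divisor is a nowhere-vanishing regular section), so nothing breaks.
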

\begin{proof}
  In either case, $\mathscr C$ is normal. Hence any two extensions of
  $\mathscr L_{\eta}$ to line bundles on $\mathscr C$ differ by some $\mathcal
  O_{\mathscr C}(\mathscr D)$, where $\mathscr D$ is a Cartier divisor with
  $\mathrm{supp}(\mathscr D) \subset \mathscr C_{\zeta}$. In Case 2, $\mathscr
  C$ is regular, and thus every line bundle on $\mathscr C_{\eta}$ extends.
  The rest is obvious.
\end{proof}

\subsection{Prestable extensions with base point condition}
We use the valuative criterion.
For any
\[
  \xi_{\eta} = (\sC_\eta,\Si^{\sC_{\eta}},\sL_{\eta},\varphi_{\eta}) \in
  \mathrm{LGQ}^{\Omega}_{g,k}(\mathbb P^{N-1}, d)(\eta),
\]
we intend to show that possibly after a finite base change, $\xi_\eta$
extends uniquely to a $\xi = (\sC,\Si^\sC,\sL, \varphi)\in
\mathrm{LGQ}^{\Omega}_{g,k}(\mathbb P^{N-1}, d)(T)$.

By a standard gluing argument, we may assume that $\sC_{\eta} \to \eta$ is smooth with
connected geometric fibers. Indeed, along nodes or
markings, $\omega_{\mathscr C/T}^{\log}$ is canonically
trivial and $\varphi$ is nonzero. Thus
we can glue the LG-quasimaps in the same way
as we glue maps from curves into $\mathbb P^{N-1}$.

\begin{lemm} \label{lem:prestable-reduction}
Let $\xi_\eta=(\sC_\eta,\Si^{\sC_\eta},\sL_\eta, \varphi_\eta)$ be as above,
with $\sC_\eta$ regular.
Then possibly after a finite base change, there is an extension
  \[
    \textstyle  \xi^\prime =
    (\sC^\prime,\Si^{\sC^\prime},\sL^\prime, \varphi^\prime)\in
    \mathrm{LGQ}^{\mathrm{pre}}_{g,k}(\mathbb P^{N-1}, d)(T)
  \]
satisfying the base-point condition $\Omega$-2. Moreover, given any extension of
nodal curves $\sC^{\prime\prime} \to T$ of $\sC_\eta$, we can choose $\mathscr
C^{\prime}$ so that there exists a $T$-morphism $\mathscr C^\prime \to \mathscr
C^{\prime\prime}$ extending the identity on $\mathscr C_{\eta}$.
\end{lemm}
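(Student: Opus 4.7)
My plan is to pass to a regular model dominating $\sC''$, extend the fields there using Lemma~\ref{lem:extending-L-and-varphi}, and then run a finite sequence of blowups combined with twists by exceptional divisors to enforce both the prestability (discrete base locus disjoint from special points) and the inequality $\Omega$-2. The gluing remark immediately preceding the statement allows me to reduce to the case where $\sC_\eta$ is smooth and geometrically connected.

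I first pick a resolution of singularities $h: \tilde\sC \to \sC''$ such that $\tilde\sC$ is a regular surface and $h$ is an isomorphism over the smooth locus of $\sC''$; then $\tilde\sC_\zeta$ is reduced and nodal, with the $A_n$-singularities of $\sC''$ at its nodes resolved into chains of rational curves. Lemma~\ref{lem:extending-L-and-varphi} then yields a unique extension $(\tilde\sL, \tilde\varphi)$ of $(\sL_\eta, \varphi_\eta)$ with the property that $\tilde\varphi$ does not vanish identically along any component of $\tilde\sC_\zeta$. In particular, at every generic point $y$ of any component some $\tilde\varphi_i$ is nonvanishing, so $\min_i\{\mathrm{ord}_y \tilde\varphi_i + c_i\} \leq s_{\mathrm{max}} < A$, and $\Omega$-2 already holds generically.

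Failures of prestability or of $\Omega$-2 are therefore localized at finitely many closed points $x \in \tilde\sC_\zeta$: either $x$ is a base point of $\tilde\varphi$ that coincides with a marking or a node of $\tilde\sC_\zeta$, or $x$ is a non-special base point with $\min_i\{\mathrm{ord}_x \tilde\varphi_i + c_i\} > A$. At any such $x$, blow up to obtain $f: \tilde\sC^\# \to \tilde\sC$ with exceptional divisor $E$, and replace $\tilde\sL$ by $\tilde\sL^\# := f^*\tilde\sL(-bE)$ with $b = \max_i\{c_i - \mathrm{ord}_x \tilde\varphi_i\}$; Lemma~\ref{lem:Omega-log-under-blowup} then turns each $f^*\tilde\varphi_i$ into a regular section $\tilde\varphi^\#_i$ of $\tilde\sL^\# \otimes (\omega^{\log}_{\tilde\sC^\#/T})^{\otimes c_i}$, and by the choice of $b$ at least one $\tilde\varphi^\#_i$ is nonvanishing along $E$, restoring $\Omega$-2 at the generic points of $E$. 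Iterating this modification at each bad point in turn, I arrive at a regular model $\tilde\sC^{(N)}$ on which the base locus of the extended fields is discrete, disjoint from all special points, and $\Omega$-2 holds at every non-special closed point.

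Finally, possibly after a finite base change $T' \to T$ ramified at $\zeta$ and using Lemma~\ref{lem:Omega-log-under-base-change-and-normalization} to transport the fields, I contract the rational components of $\tilde\sC^{(N)}_\zeta$ that carry no marking and serve only to resolve former nodes, using Castelnuovo's criterion, so that only the essential nodal model remains. This produces the required family of pointed nodal curves $\pi': \sC' \to T'$ with a canonical $T'$-morphism $\sC' \to \sC''_{T'}$ extending the identity on the generic fiber, and the extended fields descend to $(\sL', \varphi')$, making $\xi' = (\sC', \Sigma^{\sC'}, \sL', \varphi')$ an element of $\mathrm{LGQ}^{\mathrm{pre}}_{g,k}(\mathbb P^{N-1}, d)(T')$ satisfying $\Omega$-2. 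The hard part is the termination of the blowup loop: one needs a strictly decreasing invariant, for example the lexicographic pair consisting of the number of base points lying on special points and the maximum of $\min_i\{\mathrm{ord}_x \tilde\varphi_i + c_i\} - A$ over the violating non-special points. Verifying that each blowup-and-twist strictly reduces this invariant is a careful bookkeeping exercise using the explicit change-of-line-bundle formula encoded in Lemma~\ref{lem:Omega-log-under-blowup}.
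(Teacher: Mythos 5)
Your overall shape (extend the fields on a regular model, then blow up and twist by exceptional divisors, then descend) matches the paper's, but the crux is exactly the point you defer---termination of the blowup loop---and the invariant you propose fails. Concretely, take $N=2$, $c_1=c_2=0$, and near a smooth non-special point of the special fiber let $\varphi_1=z^2-t$, $\varphi_2=z^{100}$: there are no base points on the generic fiber, but both sections vanish at $z=0$ on the special fiber, violating $\Omega$-2. Blowing up there and twisting by $-E$ (one copy, the minimal vanishing order along $E$) yields $z'-w$ and $z'^{99}$, whose common zero is now the node $E\cap\tilde F$ of the new special fiber. So the first entry of your lexicographic invariant (the number of base points lying on special points) has \emph{increased} from $0$ to $1$; and the next blowup, centered at a node, produces a special fiber in which the exceptional curve has multiplicity two---a nonreduced situation your setup (which insists on a reduced nodal model obtained by resolving $\sC''$) does not accommodate until the very last step. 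The paper's proof is organized precisely around these two difficulties: it first passes to the divisor $\sD_\eta=\bigcup_i(\varphi_{\eta,i}=0)^{\mathrm{red}}$ of \emph{individual} zeros, makes it a union of sections by base change, and treats them as extra markings so that the extension of pointed nodal curves separates them ((G1)--(G2)); it works throughout with possibly nonreduced special fibers and the sheaf $\Omega_{\sC/T}(\log\sC_\zeta+\Si^{\sC})$ of Section~\ref{sec:nonreduced-fiber}; and the strictly decreasing quantity is not the size of the violation at a point but $\delta(\sD_1,\sC_{\zeta,1})=\min_{\lambda_i=0}\mu_i+\min_{\mu_i=0}\lambda_i$, built simultaneously from the normalized vanishing orders $\lambda_i$ along the horizontal section and $\mu_i$ along the vertical component. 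That quantity vanishes exactly when a single index $j$ both achieves the minimum in $\Omega$-2 along $\sD_1$ and is nonvanishing on $\sC_{\zeta,1}$ (condition (G5)), which is what actually transfers $\Omega$-2 from the generic to the special fiber; your proposal never isolates this condition, and without it the bookkeeping you describe cannot close.

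Two smaller points. Your twisting coefficient $b=\max_i\{c_i-\mathrm{ord}_x\tilde\varphi_i\}$ is not the right normalization: one subtracts $\min_i\{\mathrm{ord}_E(f^*\tilde\varphi_i)+c_i\}$ copies of $E$, with the $c_i$ entering only when the center is non-special by Lemma~\ref{lem:Omega-log-under-blowup}, and ``order at a surface point'' must be replaced by an order along a divisor. And the final Castelnuovo contraction is both unnecessary and risky: the exceptional components are exactly where the separated base points now live, the $T$-morphism to $\sC''$ is already supplied by composing the blowdowns, and the correct way to return to a reduced nodal family is the paper's finite base change plus normalization via Lemma~\ref{lem:Omega-log-under-base-change-and-normalization}, not a contraction of $(-1)$-curves.
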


\begin{proof}
  The construction is divided into two steps. In the first step, we relax the
  requirement by allowing nonreduced special fibers. We find an extension
  satisfy (G1)-(G5), to be introduced below.
  In the second step, we make the special fiber reduced by a
  standard process of finite base change and normalization, and the resulting
  family will be the desired $\xi^\prime$.

  Now possibly after finite base change, let
  \[
    \xi=(\sC,\Si^{\sC},\sL, \varphi)
  \]
  be any extension of $\xi_\eta$ with possibly nonreduced special fiber, as
  discussed in Section~\ref{sec:nonreduced-fiber}. 

  \begin{subl}
    Suppose $\xi$ satisfies
    \begin{enumerate}
    \item[(G1)] The $T$-flat part $\mathscr D$ of $\bigcup_{i = 1}^N(\varphi_i =
      0)^{\mathrm{red}}$ is a disjoint union of sections.
    \item[(G2)]
      For any marking $\Sigma_{i}^{\mathscr C}$, either
      $\Sigma_{i}^{\mathscr C}\subset \mathscr D$ or $\Sigma_{i}^{\mathscr C}\cap
      \mathscr D = \emptyset$.
    \item[(G3)]
      For any two intersecting irreducible components $\mathscr C_{\zeta, 1},
      \mathscr C_{\zeta,2} \subset (\mathscr C_{\zeta})^{\mathrm{red}}$, there is a
      $j$ so that $(\varphi_j = 0)$ does not contain $\mathscr
      C_{\zeta, 1}\cup \mathscr C_{\zeta,2}$.
    \item[(G4)]
      For any irreducible component $\mathscr C_{\zeta, 1} \subset (\mathscr
      C_{\zeta})^{\mathrm{red}}$ intersecting a marking $\Sigma^{\mathscr C}_i$, there exists
      some $j$ such that $(\varphi_j = 0)$ does not contain $\mathscr C_{\zeta, 1}
      \cup \Sigma^{\mathscr C}_i$.
    \item[(G5)]
      For any irreducible component $\mathscr D_1 \subset \mathscr D$
      intersecting any irreducible component $\mathscr C_{\zeta,1}\subset (\mathscr C_{\zeta})^{\mathrm{red}}$, suppose
      $\mathscr D_1\not\subset \Sigma^{\mathscr C}$, then there exists some $j$ such
      that $(\varphi_j = 0)$ does not contain $\mathscr C_{\zeta,1}$, and that
      \[ \mathrm{ord}_{\mathscr D_1} \varphi_j =
        \min_{\alpha} \{
        \mathrm{ord}_{\mathscr D_1} \varphi_\alpha  + c_\alpha
        \}-c_j,
      \]
      where $\mathrm{ord}_{\mathscr D_1} \varphi_\alpha$ means the vanishing
      order of $\varphi_j$ along $\mathscr D_1$.
    \end{enumerate}
    Then by finite base change and normalization
    we can construct a $\xi^\prime$ from $\xi$ to satisfy the requirement of Lemma~\ref{lem:prestable-reduction}.
  \end{subl}
  \begin{proof}
    We first claim that $(\varphi = 0) \cap \mathscr C_{\zeta}$ is always discrete.
    Indeed, by our construction,
    it is automatic that $(\varphi = 0)$
    does not contain any irreducible component of $(\mathscr C_{\zeta})^{\mathrm{red}}$.

    Also, (G1) and (G3) imply that $(\varphi = 0)$ is away from the nodes of
    $(\mathscr C_{\zeta})^{\mathrm{red}}$. And (G1), (G2) and (G4) imply that
    $(\varphi = 0) \cap \Sigma^{\mathscr C} = \emptyset$.

    Further, (G1) and (G5) implies that for any $q\in (\mathscr C_{\zeta})^{\mathrm{red}}$ that is not a node or marking,
    we have
    \[
      \min_{i}\{\mathrm{ord}_{q} (\varphi_i|_{(\mathscr C_\zeta)^{\mathrm{red}}}) + c_i\} \leq A.
    \]
    Indeed, if $q\not\in \mathscr D$, then $\varphi(q) \neq 0$ by the
    construction of $\varphi$ and $\mathscr D$, and the above follows from the
    fact that $c_{i} < A$ for each $i$. Otherwise let
    $\mathscr D_1$ and $\mathscr C_{\zeta,1}$ be as in (G5), and suppose $q = \mathscr D_1\cap
    \mathscr C_{\zeta,1}$, then by the $\Omega$-stability of the generic fiber,
    we have
    \[
      \min_{i} \{
      \mathrm{ord}_{\mathscr D_1} \varphi_i  + c_i\} \leq A.
    \]
    Combined with (G1) and (G5), we obtain
    \[
      \mathrm{ord}_{q} (\varphi_j|_{(\mathscr C_{\zeta})^{\mathrm{red}}}) + c_j =
      \mathrm{ord}_{\mathscr D_1} \varphi_j + c_j= \min_{\alpha} \{
      \mathrm{ord}_{\mathscr D_1} \varphi_\alpha  + c_\alpha
      \} \leq A.
    \]
    Summarizing,
   $\xi$ is almost the desired extension
    except that $\mathscr C_{\zeta}$ might be nonreduced. Then we use a
    standard procedure of finite base
    change and normalization to make the special fiber reduced, resulting in $(\mathscr C^\prime,
    \Sigma^{\mathscr C^{\prime}})$.
    We set  $(\mathscr L^\prime, \varphi^\prime)$ to
    be the pullback of $(\mathscr L, \varphi)$ to $\mathscr C^\prime$.
    This is possible
    thanks to
    Lemma~\ref{lem:Omega-log-under-base-change-and-normalization}.
    Then $\xi^\prime := (\mathscr C^\prime,
    \Sigma^{\mathscr C^{\prime}}, \mathscr L^\prime, \varphi^\prime)$
    satisfies the requirement of Lemma~\ref{lem:prestable-reduction}.
  \end{proof}
To finish the proof of the lemma, we need to construct a $\xi$ satisfying
(G1)-(G5).

Without loss of generality, we assume that none of the $\varphi_{\eta, i}$'s are trivial.
Set $\mathscr D_{\eta} = \bigcup_{i} (\varphi_{\eta, i} = 0)^{\mathrm{red}} \subset \mathscr C_{\eta}$.
After finite base change, we can make $\mathscr D_{\eta}$ a union of sections of
$\mathscr C_{\eta} \to \eta$.
By viewing $\mathscr D_{\eta}$ as additional markings, possibly after finite
base change,
we can extend $(\mathscr C_{\eta}, \Sigma^{\mathscr C_{\eta}} \cup \mathscr D_{\eta})$ to
a $T$-family of pointed nodal curves $(\mathscr C, \Sigma^{\mathscr C}\cup
 \mathscr D)$ such that $\mathscr C$ is a regular surface.
Using Lemma~\ref{lem:extending-L-and-varphi},
extend $(\mathscr L_{\eta}, \varphi_{\eta})$ to $(\mathscr L, \varphi)$ on
$\mathscr C$ such that $(\varphi = 0)$ does not contain any irreducible
component of $\mathscr C_{\zeta}$.

Consider the $T$-family $\xi = (\mathscr C, \Sigma^{\mathscr C}, \mathscr L,
\varphi)$.
We now modify $\xi$ by a sequence of blowups.
After each blowup, we obtain a unique extension of
$(\mathscr L_{\eta}, \varphi_{\eta})$ to the new
surface by Lemma~\ref{lem:extending-L-and-varphi}. This way we get a new
extension and we replace the original $\xi$ by it. In the following, we will show that after
finitely many such steps, $\xi$ satisfies (G1)-(G5) and the proof will thus be complete.

By our construction, $\xi$ already satisfies (G1) and (G2).
As further blowing ups will not affect the properties
(G1) and (G2), they will still be satisfied by the subsequent $\xi$'s.

We first achieve property (G5).
Let $\mathscr D_1, \mathscr C_{\zeta,1}$ be as in the statement of (G5).
For $i = 1 ,\ldots, N$, set
\[
  \textstyle \lambda_i = \mathrm{ord}_{\mathscr D_1} \varphi_i  + c_i -
  \min_{\alpha = 1}^N\{ \mathrm{ord}_{\mathscr D_1} \varphi_{\alpha}  + c_\alpha\}
  \quad \text{and} \quad
  \mu_{i} = \mathrm{ord}_{\mathscr C_{\zeta, 1}} \varphi_{i}.
\]
Note that $\lambda_i,\mu_i \in \mathbb Z_{\geq 0}$ and
$\min_i \{\lambda_i\} = \min_i \{\mu_i\} =0$.
We let
\[
  \delta(\mathscr D_1, \mathscr C_{\zeta,1}) := \min_{\lambda_i= 0} \mu_i+
  \min_{\mu_i = 0} \lambda_i.
\]
Note that $\delta(\mathscr D_1, \mathscr C_{\zeta,1}) \in \mathbb Z_{\geq 0}$, and that
$\delta(\mathscr D_1, \mathscr C_{\zeta,1})  = 0$ if and only if
$\lambda_i= \mu_i= 0$ for some $i$, if and only if (G5)
holds for $(\mathscr D_1, \mathscr C_{\zeta,1})$.

From now on we suppose $\delta(\mathscr D_1, \mathscr C_{\zeta,1}) > 0$.
Recall that $\mathscr C$ is a regular surface.
Let $\tau:\tilde{\mathscr C} \to \mathscr C$ be the blowing up at
$\mathscr D_1\cap\mathscr C_{\zeta,1}$, with exceptional divisor $\mathscr E$. Let
$\tilde{\mathscr D}_1$ be the proper transform of $\mathscr D_1$, and
$\Sigma^{\tilde{\mathscr C}}$ be the proper transform of $\Sigma^{\mathscr C}$.
Using Lemma~\ref{lem:Omega-log-under-blowup},
\begin{align*}
  \tau^* \varphi_i \in & H^0(\tilde{\mathscr C}, \tau^*{\mathscr L} \otimes  (\tau^*\omega^{\log}_{{\mathscr C}/T}
                         (-(\mathrm{ord}_{\mathscr D_1}\varphi_i +
                         \mathrm{ord}_{\mathscr C_{\zeta, 1}} \varphi_i)\mathcal E)) \\
  = & H^0(\tilde{\mathscr C}, \tau^*{\mathscr L} \otimes
      (\tau^*\omega^{\log}_{{\mathscr C}/T}
      (-(\mathrm{ord}_{\mathscr D_1}\varphi_i + \mathrm{ord}_{\mathscr
      C_{\zeta, 1}} \varphi_i + c_i)\mathcal E)
      ).
\end{align*}
Set
$\tilde {\mathscr L} =
\tau^*\mathscr L (- \min_{\alpha}\{\mathrm{ord}_{\mathscr D_1}\varphi_\alpha
+ \mathrm{ord}_{\mathscr C_{\zeta, 1}} \varphi_\alpha + c_\alpha \} \mathscr E)$
and let
\[
  \tilde{\varphi}_i \in
  H^0(\tilde{\mathscr C}, \tilde{\mathscr L} \otimes
  (\Omega_{\tilde{\mathscr C}/T}(\log \tilde{\mathscr C}_{\zeta}
  + \Sigma^{\tilde{\mathscr C}}
  ))^{\otimes c_i}
  )
\]
be the image of $\tau^*\varphi_i$ under the natural inclusion of sheaves. Then we have
\begin{align*}
  \mathrm{ord}_{\mathcal E} \tilde{\varphi}_i
  = & \mathrm{ord}_{\mathscr D_1}\varphi_i + \mathrm{ord}_{\mathscr
    C_{\zeta, 1}} \varphi_i + c_i
    -
    \min_{\alpha}\{
    \mathrm{ord}_{\mathscr D_1}\varphi_\alpha + \mathrm{ord}_{\mathscr
    C_{\zeta, 1}} \varphi_\alpha + c_\alpha
    \} \\
  = & \lambda_i + \mu_i - \min_{\alpha}\{ \lambda_{\alpha} + \mu_{\alpha} \},
\end{align*}
and
$\mathrm{ord}_{\tilde{\mathscr D}_1} \tilde{\varphi}_i =
\mathrm{ord}_{\mathscr D_1} \varphi_i = \mu_{i}$.
This means $(\tilde{\varphi} = 0)$ does not contain $\mathscr E$ and
hence $(\tilde{\mathscr L}, \tilde{\varphi})$ is precisely
the extension in Lemma~\ref{lem:extending-L-and-varphi}.

We will replace $(\mathscr C, \Sigma^{\mathscr C}, \mathscr L, \varphi)$ by
$(\tilde{\mathscr C}, \Sigma^{\tilde{\mathscr C}}, \tilde{\mathscr L},
\tilde{\varphi})$.
To show that (G5) can be achieved in finitely many such steps, it suffices to show
that
\[
  \delta(\tilde{\mathscr D}_1, \mathcal E) <
  \delta(\mathscr D_1, \mathscr C_{\zeta,1}),
\]
where
\begin{align*}
  \delta(\tilde{\mathscr D}_1, \mathcal E) =
  & \min \{\mu_{\beta} \mid \lambda_{\beta} + \mu_{\beta} =
    \min_{\alpha}\{\lambda_{\alpha} + \mu_{\alpha}\}\} \\
  & + \min \{
    \lambda_{\beta} + \mu_{\beta} - \min_{\alpha}\{\lambda_{\alpha} + \mu_{\alpha}\}
    \mid \mu_{\beta} = 0
    \}.
\end{align*}
It is equivalent to
\begin{equation}
  \label{eq:effect-of-blowup-concrete-form}
  \min_{ \lambda_{\beta} + \mu_{\beta}
    = \min_{\alpha}\{ \lambda_{\alpha} + \mu_{\alpha}\}}
  \mu_{\beta}
  - \min_{\alpha}\{\lambda_{\alpha} + \mu_{\alpha}\}
  < \min_{ \lambda_{\alpha} = 0} \mu_{\alpha} .
\end{equation}
Note that we have
  \[
    \min_{\lambda_{\beta} + \mu_{\beta}
    = \min_{\alpha}\{\lambda_{\alpha} + \mu_{\alpha}   \}}
    \mu_{\beta}
  \leq  \min_{\alpha}\{\lambda_{\alpha} + \mu_{\alpha} \}
    \leq \min_{\lambda_{\alpha} = 0} \mu_{\alpha} .
  \]
Further, since $\delta(\mathscr D_1, \mathscr C_{\zeta,1}) > 0$, $\lambda_{\alpha} =
  \mu_{\alpha} = 0$ for no $\alpha$. Hence $\min_{\alpha}\{\lambda_{\alpha} +
  \mu_{\alpha}\} > 0$.
These two combined give \eqref{eq:effect-of-blowup-concrete-form}.
Hence (G5) can be achieved after finitely many blowing ups.

The goals (G3) and (G4) can be dealt with using the same method,
leaving out the $c_i$'s from the formulas: (G3) (resp.\ (G4)) can be achieved by successively
blowups at $\mathscr C_{\zeta,1} \cap \mathscr C_{\zeta,2}$ (resp.\ $\mathscr
C_{\zeta, 1} \cap \Sigma^{\mathscr C}_i$). Note that
by (G1) and (G2), the three types of modifications do not affect each other and
can be done in any order. Thus the proof is complete.
\end{proof}

\subsection{Stabilization}
Let $T,\eta,\zeta$, and $\xi_{\eta} = (\mathscr C_{\eta}, \Sigma^{\mathscr
  C_{\eta}}, \mathscr L_{\eta}, \varphi_{\eta})$ be as before.
We will show that any prestable extension of $\xi_{\eta}$
has a unique stabilization. This will finish the existence part of the valuative
criterion. Since we have shown that any two prestable extensions are dominated
by a third one, this also implies the uniqueness part.

\medskip

Now let
\[
  \xi^{\prime},\,  \xi^{\prime\prime} \in \mathrm{LGQ}^{\mathrm{pre}}_{g,k}(\mathbb P^{N-1}, d)(T)
\]
be two extensions of $\xi_\eta$.
We write $\xi^{\prime} = (\sC^{\prime},\Si^{\sC^{\prime}},\sL^{\prime},
\varphi^{\prime})$ and similarly for $\xi^{\prime\prime}$.
Due to Lemma~\ref{lem:extending-L-and-varphi}, $\xi_{\eta}$ is completely
determined by $ (\sC^{\prime},\Si^{\sC^{\prime}})$.
\begin{defi}
  We say $\xi^{\prime\prime}$ is a contraction of $\xi^\prime$, and
  write $f:\xi^{\prime} \to \xi^{\prime\prime}$, if there exists a $T$-morphism
  $f: \mathscr C^\prime \to \mathscr C^{\prime\prime}$ extending the identity on $\mathscr
  C_{\eta}$. We call it an ($\Omega$-)stabilization if in addition
  $\xi^{\prime\prime}$ is ($\Omega$-)stable.
\end{defi}
The goal of
this subsection is to prove
\begin{lemm}
  \label{lem:stabilization}
  If $\xi^\prime$ satisfies $\Omega$-2, then it has a unique stabilization.
\end{lemm}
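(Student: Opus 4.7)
The plan is to identify explicitly the locus $E\subset \sC'_\zeta$ that must be contracted, construct the contraction $f:\sC'\to \sC''$ as a flat family over $T$, and verify the descended data yield an $\Omega$-stable LG-quasimap. Since $\xi'_\eta=\xi_\eta$ is already $\Omega$-stable, $\Omega$-1 is part of prestability in the $\PP^{N-1}$ setting, and $\Omega$-2 is given by hypothesis, the only possible failure of stability on $\xi'$ is $\Omega$-3 on components of the closed fiber. By Corollary~\ref{cor:non-negative-components} a component $\sB\subset \sC'_\zeta$ violates $\Omega$-3 precisely when either
\begin{itemize}
\item[(a)] $\deg(\omega^{\log}_{\sC'/T}|_\sB)<0$ and $\deg(\sL'|_\sB)+A\deg(\omega^{\log}_{\sC'/T}|_\sB)\le 0$, or
\item[(b)] $\deg(\omega^{\log}_{\sC'/T}|_\sB)=0$ and the induced quasimap on $\sB$ is a constant regular map.
\end{itemize}
Let $E$ denote the union of such unstable components.

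For uniqueness, any stabilization $f:\xi'\to\xi''$ must contract every component of $E$ (else $\Omega$-3 fails on $\xi''$), and cannot contract any other component of $\sC'_\zeta$ (else an irreducible component of $\sC''_\zeta$ would arise as the image of a component of positive $\sL'\otimes(\omega^{\log})^{A}$-degree collapsed to a point, a contradiction). Hence the combinatorial type of $\sC''$ is determined by $\xi'$, and by Lemma~\ref{lem:extending-L-and-varphi} the pair $(\sL'',\varphi'')$ extending $(\sL_\eta,\varphi_\eta)$ to $\sC''$ is unique up to unique isomorphism.

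For existence, I contract $E$ one piece at a time, from the outside in. Type (b) components have $\sL'|_\sB$ trivial with each $\varphi_i|_\sB$ a nonzero constant, so contracting them is a clean descent. Type (a) components assemble into rational trees attached to the stable part at a single node; pruning the leaves one at a time, after contracting a leaf $\sB$ meeting the rest at a node $q$, replace $q$ by a smooth non-marked point and set $(\sL'',\varphi'')$ to be the restriction of $(\sL',\varphi')$ to $\sC'\setminus \sB$. Because $q$ is a special point of $\xi'$, the tuple $\varphi'(q)$ lies in the semistable locus, so $q$ is not a new base point of the contracted data. Globally over $T$, this procedure is realized as the relative contraction associated to a sufficiently high and sufficiently divisible tensor power of $\sL'\otimes(\omega^{\log}_{\sC'/T})^{\otimes A}$, whose relative base locus is precisely $E$; this yields a flat nodal $T$-family $\sC''$ together with the morphism $f:\sC'\to\sC''$.

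The main obstacle is verifying that this relative contraction genuinely produces a family of nodal curves — not one with cusps or non-reduced special fiber — and that $\Omega$-2 continues to hold on $\sC''$. Nodality is handled by local computation at each node attaching an unstable leaf to the stable part: $\sC'$ is regular near such nodes, and the blow-down of a $(-1)$- or $(-2)$-curve in a regular arithmetic surface again produces a regular nodal surface, so iterating preserves the nodal family structure. Preservation of $\Omega$-2 on $\sC''_\zeta$ is automatic, since every non-special point of $\sC''_\zeta$ is either a non-special point of $\sC'_\zeta$ where $\Omega$-2 already held, or the image of a contracted special point at which $\varphi''$ is nonvanishing. Together with $\Omega$-3, which holds by construction of $E$, this shows $\xi''$ is $\Omega$-stable.
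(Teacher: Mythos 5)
There are two genuine gaps. First, your identification of the contracted locus is wrong: you define $E$ as the union of the components of $\mathscr C^{\prime}_{\zeta}$ that violate $\Omega$-3 \emph{in $\xi^{\prime}$} and claim a stabilization contracts exactly $E$. But contraction is not a one-pass operation. When a rational tail $\mathscr E$ with $\deg(\mathscr L^{\prime}\otimes(\omega^{\log}_{\mathscr C^{\prime}/T})^{\otimes A}|_{\mathscr E})\le 0$ is contracted, the degrees of both $\mathscr L$ and $\omega^{\log}$ on the adjacent component change (the node disappears and the degree of the tail is absorbed), so a component that satisfied $\Omega$-3 before --- indeed a component with three or more special points, which satisfies $\Omega$-3 automatically and hence is never in your $E$ --- can become a rational tail violating $\Omega$-3 afterwards and must also be contracted. (Concretely: a component $\mathscr E_0$ with $\deg\mathscr L^{\prime}|_{\mathscr E_0}=0$ carrying two degree-zero rational tails becomes, after those tails are contracted, a rational tail of degree $-A<0$.) This is why the paper's proof contracts tails \emph{iteratively} (``we repeat this process until all rational tails have the property $\Omega$-3'') and why the uniqueness argument goes through Lemma~\ref{lem:when-a-rational-tail-is-contracted} and Lemma~\ref{lem:persistence-of-base-points} rather than through a static description of the contracted locus. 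Your uniqueness claim that no component outside $E$ can be contracted is therefore false as stated, and the ``contradiction'' you invoke for it is not justified.

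Second, your verification that $\Omega$-2 survives the contraction is based on an incorrect assertion. You claim the image $q$ of a contracted tail is a point ``at which $\varphi^{\prime\prime}$ is nonvanishing.'' In general it is not: the extension $(\mathscr L^{\prime\prime},\varphi^{\prime\prime})$ of Lemma~\ref{lem:extending-L-and-varphi} satisfies
\[
  \min_{i}\{\mathrm{ord}_{q}(\varphi_i^{\prime\prime})+c_i\}
  = A + \deg\bigl(\mathscr L^{\prime}\otimes(\omega^{\log}_{\mathscr C^{\prime}/T})^{\otimes A}|_{\mathscr E}\bigr),
\]
which is the content of Lemma~\ref{lem:contracting-a-rational-tail}; so $q$ typically \emph{is} a base point, and $\Omega$-2 holds there precisely because the contracted tail violated $\Omega$-3 (and conversely fails when the tail satisfied $\Omega$-3). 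This degree computation is the heart of the proof --- it is what couples $\Omega$-2 and $\Omega$-3 and drives both existence and uniqueness --- and your proposal replaces it with an assertion that is false. Smaller issues: the proposed Proj-type global contraction via powers of $\mathscr L^{\prime}\otimes(\omega^{\log}_{\mathscr C^{\prime}/T})^{\otimes A}$ does not work because that $\mathbb Q$-line bundle has strictly negative degree on the very components you want to contract (so it is not relatively semiample), and blowing down a $(-2)$-curve does not yield a regular surface; the paper instead contracts tails and bridges directly using the standard nodal-curve contractions (Knudsen), which sidesteps both problems.
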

The proof is similar to the quasimap case, with one additional complication: The
degree of a quasimap on each irreducible component is always non-negative, but
$\mathscr L^\prime \otimes (\omega_{\mathscr C^\prime/T}^{\log})^{\otimes A}$
can be negative on some irreducible components of $\mathscr C^\prime_{\zeta}$.

In a pointed nodal curve, a rational tail (resp. bridge) is defined to be a
smooth rational irreducible component with one (resp. two) special points.

\begin{lemm}
  \label{lem:contracting-a-rational-tail}
  Let $\mathscr E\subset \mathscr C^{\prime}_{\zeta}$
  be a rational tail. Then there exists a contraction
  $f:\xi^\prime\to \xi^{\prime\prime}$ contracting $\mathscr E$ to some point $q$.
  Further, $\xi^{\prime\prime}$ violates $\Omega$-2 at $q$ if and
  only if $\xi^{\prime}$ satisfies $\Omega$-3 on $\mathscr E$.
\end{lemm}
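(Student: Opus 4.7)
My plan is to first construct $f$ at the level of the underlying nodal curves, then extend the LG-data across the new smooth point $q$, and finally read off the equivalence from a local formula for the vanishing orders at $q$.

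First I would carry out the contraction of the rational tail. The component $\mathscr{E}\cong\mathbb{P}^1$ carries exactly one special point; under the standing assumption that $\mathscr{C}^\prime_\zeta$ is connected with at least one other irreducible component, this special point must be a node $q_0$ where $\mathscr{E}$ meets $\mathscr{C}^\prime_1:=\overline{\mathscr{C}^\prime_\zeta\setminus\mathscr{E}}$, so $\mathscr{E}$ contains no marking. Contracting a rational tail in a family of twisted nodal curves is then a classical operation: locally, $\mathscr{C}^\prime$ at $q_0$ has the $A_{a-1}$-model $xy-t^a=0$; resolving this singularity turns $\mathscr{E}$ (together with the exceptional chain) into a configuration of $(-1)$-curves, which one blows down via Castelnuovo, descending the twisted structure. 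The outcome is a $T$-family $\pi^{\prime\prime}\colon\mathscr{C}^{\prime\prime}\to T$ of pointed balanced twisted nodal curves together with a $T$-morphism $f\colon\mathscr{C}^\prime\to\mathscr{C}^{\prime\prime}$ which is an isomorphism over $\mathscr{C}^\prime\setminus\mathscr{E}$, collapses $\mathscr{E}$ to a smooth point $q\in\mathscr{C}^{\prime\prime}_\zeta$, and identifies $\Sigma^{\mathscr{C}^\prime}$ with $\Sigma^{\mathscr{C}^{\prime\prime}}$.

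Next I would construct $(\mathscr{L}^{\prime\prime},\varphi^{\prime\prime})$. Set $e:=\deg(\mathscr{L}^\prime|_{\mathscr{E}})$. Since $\mathscr{E}$ has self-intersection $-1$, the line bundle $\mathscr{L}^\prime\otimes\mathcal{O}_{\mathscr{C}^\prime}(e\mathscr{E})$ is trivial on $\mathscr{E}$, so $\mathscr{L}^{\prime\prime}:=f_*(\mathscr{L}^\prime\otimes\mathcal{O}(e\mathscr{E}))$ is a line bundle on $\mathscr{C}^{\prime\prime}$ that agrees with $\mathscr{L}^\prime$ on $\mathscr{C}^{\prime\prime}\setminus\{q\}$. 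Combining with Lemma~\ref{lem:Omega-log-under-blowup} applied to the blow-up of the smooth point $q$, which gives a canonical isomorphism $f^*\omega^{\log}_{\mathscr{C}^{\prime\prime}/T}\cong\omega^{\log}_{\mathscr{C}^\prime/T}(-\mathscr{E})$, I obtain the canonical identification
\[
  f^*\bigl(\mathscr{L}^{\prime\prime}\otimes(\omega^{\log}_{\mathscr{C}^{\prime\prime}/T})^{\otimes c_i}\bigr) \;\cong\; \mathscr{L}^\prime\otimes(\omega^{\log}_{\mathscr{C}^\prime/T})^{\otimes c_i}\otimes\mathcal{O}((e-c_i)\mathscr{E}).
\]
Each regular section $\varphi^\prime_i$ therefore defines a unique $\varphi^{\prime\prime}_i\in H^0(\mathscr{C}^{\prime\prime},\mathscr{L}^{\prime\prime}\otimes(\omega^{\log}_{\mathscr{C}^{\prime\prime}/T})^{\otimes c_i})$ with $f^*\varphi^{\prime\prime}_i=\varphi^\prime_i$; uniqueness is Lemma~\ref{lem:extending-L-and-varphi} applied to $\pi^{\prime\prime}$.

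Now the key local computation. Working in the local model $\mathrm{Spec}\,\mathbb{C}[[x,y,t]]/(xy-t^a)$ at $q_0$, with $\mathscr{E}=\{y=0\}$, $\mathscr{C}^\prime_1=\{x=0\}$, and $\omega^{\log}_{\mathscr{C}^\prime/T}$ generated by $a\,\mathrm{d}\!\log x=-a\,\mathrm{d}\!\log y$, I trivialize $\mathscr{L}^\prime$ locally so that $\varphi^\prime_i=\psi_i(x,y,t)\cdot\mathbf{e}^\prime\otimes(a\,\mathrm{d}\!\log x)^{\otimes c_i}$. Tracking the descent $f^*\mathbf{e}^{\prime\prime}=y^{-e}\mathbf{e}^\prime$ together with the effect of $\mathcal{O}((e-c_i)\mathscr{E})$ and of the contraction map, which for $a=1$ has local form $f^*u=y,\,f^*t=xy$, the coefficient function of $\varphi^{\prime\prime}_i|_{\mathscr{C}^{\prime\prime}_\zeta}$ near $q$ works out to $(-1)^{c_i}u^{e-c_i}\psi_i(0,u)$, yielding the key formula
\[
  \mathrm{ord}_q(\varphi^{\prime\prime}_i)+c_i \;=\; e \;+\; \mathrm{ord}_{q_0}\!\bigl(\varphi^\prime_i|_{\mathscr{C}^\prime_1}\bigr).
\]
The general $a$ case follows the same outline after passing through the minimal resolution, with the $a$ absorbed into the identifications.

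To conclude, prestability of $\xi^\prime$ (Definition~\ref{def:prestable-LG-quasimap}) forbids the base locus of $u^\prime$ from meeting the node $q_0$; hence some $\varphi^\prime_{i_0}$ satisfies $\varphi^\prime_{i_0}(q_0)\neq 0$ and therefore $\mathrm{ord}_{q_0}(\varphi^\prime_{i_0}|_{\mathscr{C}^\prime_1})=0$, so the formula above gives $\min_i\{\mathrm{ord}_q(\varphi^{\prime\prime}_i)+c_i\}=e$. Since $\deg(\omega^{\log}_{\mathscr{C}^\prime/T}|_{\mathscr{E}})=-1$, condition $\Omega$-3 on $\mathscr{E}$ is precisely $e>A$, which by the equality above is precisely the failure of $\Omega$-2 for $\xi^{\prime\prime}$ at $q$. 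The main obstacle is the local coordinate computation in the third step: one has to keep careful track of the twist $\mathcal{O}((e-c_i)\mathscr{E})$ and of the log-dualizing sheaf across the $A_{a-1}$-singularity in the twisted setting, in particular ensuring that the descended line bundle and the pullback identification of $\omega^{\log}$ are compatible with the $\mu_a$-orbifold structure at the node.
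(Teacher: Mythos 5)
Your proposal is correct: it reaches exactly the paper's key identity $\min_i\{\mathrm{ord}_q(\varphi''_i)+c_i\}=e=\deg(\mathscr L'|_{\mathscr E})$ and the same final equivalence, and the overall architecture (contract $\mathscr E$, extend the data over the new smooth point, compare orders at the node versus at $q$, use that $\varphi'$ is nonzero at the node) is the same. Where you diverge is in the execution of the two middle steps. For the extension, the paper does not push forward anything from $\mathscr C'$: it observes that $q$ is a \emph{regular} point of the surface $\mathscr C''$ and simply extends $(\mathscr L',\varphi')|_{\mathscr C''\setminus\{q\}}$ across $q$ by Lemma~\ref{lem:extending-L-and-varphi}. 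This sidesteps the issue you defer to the end: when the node has local model $xy=t^a$ with $a>1$, the surface $\mathscr C'$ has an $A_{a-1}$ singularity there, $\mathscr E$ is not Cartier, its self-intersection is not $-1$, and your $f_*(\mathscr L'\otimes\mathcal O(e\mathscr E))$ needs the detour through the minimal resolution that you only sketch. For the key formula, the paper replaces your local coordinate computation by a global degree count: since $f$ restricts to an isomorphism $\mathscr C'_{\zeta,1}\cong\mathscr C''_{\zeta}$ under which $\varphi'_i$ and $\varphi''_i$ agree away from $\tilde q\leftrightarrow q$, the difference of their vanishing orders at that point equals $-\deg(\mathscr L'\otimes(\omega^{\log}_{\mathscr C'/T})^{\otimes c_i}|_{\mathscr E})=-e+c_i$, which is your formula with no coordinates and no dependence on $a$. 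Your route is a legitimate alternative and makes the geometry at the node more visible, but as written it is only complete for $a=1$; the paper's degree argument is the more robust (and shorter) way to get the same identity.
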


\begin{proof}
  Let $f: \mathscr C^\prime \to \mathscr C^{\prime\prime}$ be the contraction
  of $\mathscr E$ to some point $q$. Since $q$ is a regular point of $\mathscr
  C^{\prime\prime}$, $(\mathscr L^\prime, \varphi^\prime)|_{f\upmo(\mathscr
    C^{\prime\prime} \setminus \{q\})}$ uniquely extends to $\mathscr
  C^{\prime\prime}$. This proves the existence part.

  For the ``further'' part, let $\mathscr C^{\prime}_{\zeta,1} =
  \overline{\mathscr C^\prime_{\zeta} \setminus \mathscr E}$ and $\tilde{q} = \mathscr
  C^\prime_{\zeta,1}\cap \mathscr E$. Then $f$ restricts to an isomorphism
  $f_1:= f|_{\mathscr C^{\prime}_{\zeta,1}} : \mathscr C^{\prime}_{\zeta,1} \to \mathscr
  C^{\prime\prime}_{\zeta}$.
  By construction we have
  $(\mathscr L^{\prime}, \varphi^\prime)|_{\mathscr C^{\prime}_{\zeta,1} \setminus
    \{\tilde{q}\}} \cong (f_1^*\mathscr L^{\prime\prime},
  f_1^*\varphi^{\prime\prime})|_{\mathscr C^{\prime\prime}_{\zeta} \setminus \{q\}}$.
  Hence for each $i$ we have
  \begin{align*}
    & \mathrm{ord}_{\tilde{q}}(\varphi_i^{\prime}|_{\mathscr
      C^{\prime}_{\zeta,1}}) -
      \mathrm{ord}_{q}(\varphi_i^{\prime\prime}|_{\mathscr C^{\prime\prime}_{\zeta}}) \\
    = & \deg(\mathscr L^{\prime} \otimes (\omega_{\mathscr
        C^\prime/T}^{\log})^{\otimes c_i}|_{\mathscr C^{\prime}_{\zeta,1}})
        -
        \deg(\mathscr L^{\prime\prime} \otimes
        (\omega_{\mathscr C^{\prime\prime}/T}^{\log})^{\otimes c_i}|_{\mathscr C^{\prime\prime}_{\zeta}}) \\
    = & -\deg(\mathscr L^{\prime} \otimes (\omega_{\mathscr
        C^\prime/T}^{\log})^{\otimes c_i}|_{\mathscr E}),
  \end{align*}
  That is,
  \[
    \mathrm{ord}_{q}(\varphi_i^{\prime\prime}|_{\mathscr
      C^{\prime\prime}_{\zeta}}) + c_i = A +
    \mathrm{ord}_{\tilde{q}}(\varphi_i^{\prime}|_{\mathscr
      C^{\prime}_{\zeta,1}}) + \deg(\mathscr L^{\prime} \otimes
    (\omega_{\mathscr C^\prime/T}^{\log})^{\otimes A}|_{\mathscr E}).
  \]
  Since $\tilde{q}$ is a node, $\varphi^{\prime}(\tilde{q})\neq 0$. Thus
  $\min_{i=1}^N \{\mathrm{ord}_{\tilde{q}}(\varphi_i^{\prime}|_{\mathscr
    C^{\prime}_{\zeta,1}})\} = 0$. Hence
  \[
    \min_{i=1}^{N}\{\mathrm{ord}_{q}(\varphi_i^{\prime\prime}|_{\mathscr
      C^{\prime\prime}_{\zeta}}) + c_i \}= A + \deg(\mathscr L^{\prime} \otimes
    (\omega_{\mathscr C^\prime/T}^{\log})^{\otimes A}|_{\mathscr E}).
  \]
  This proves the ``further'' part.
\end{proof}

\begin{lemm}
  \label{lem:contracting-a-chain}
  Given $\xi^\prime$ and suppose $\mathscr B\subset \mathscr C^{\prime}_{\zeta}$
  is a rational bridge. Then there exists a contraction
  $f:\xi^\prime\to \xi^{\prime\prime}$
  that contracts $\mathscr B$
  to a point  
  if and
  only if $\deg (\mathscr L^\prime|_\sB) = 0$.
\end{lemm}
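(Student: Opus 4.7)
The key geometric fact is that a rational bridge $\mathscr B \cong \mathbb P^1$ carries exactly two special points of $\mathscr C^\prime$, so
\[
  \deg(\omega^{\log}_{\mathscr C^\prime/T}|_{\mathscr B}) = -2 + 2 = 0,
\]
and hence $\omega^{\log}_{\mathscr C^\prime/T}|_{\mathscr B} \cong \mathscr O_{\mathscr B}$ is trivial. Thus for each $i$, the line bundle $\mathscr L^\prime \otimes (\omega^{\log}_{\mathscr C^\prime/T})^{\otimes c_i}|_{\mathscr B}$ has degree equal to $\deg(\mathscr L^\prime|_{\mathscr B})$, and this is the only obstruction to the contraction.

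For the forward direction, suppose a contraction $f: \xi^\prime \to \xi^{\prime\prime}$ exists with $f(\mathscr B) = \{q\}$. Then $f$ restricts to an isomorphism $\mathscr C^\prime \setminus \mathscr B \overset{\sim}{\to} \mathscr C^{\prime\prime} \setminus \{q\}$, so both $\mathscr L^\prime$ and $f^* \mathscr L^{\prime\prime}$ are extensions of $\mathscr L_\eta$ to the regular surface $\mathscr C^\prime$. By Lemma~\ref{lem:extending-L-and-varphi} (more precisely its uniqueness part, applied to the common extension of the generic data to $\mathscr C^\prime$), we deduce $\mathscr L^\prime \cong f^* \mathscr L^{\prime\prime}$. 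Since $f$ contracts $\mathscr B$ to the point $q$, $f^* \mathscr L^{\prime\prime}|_{\mathscr B}$ is trivial, forcing $\deg(\mathscr L^\prime|_{\mathscr B}) = 0$.

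For the reverse direction, assume $\deg(\mathscr L^\prime|_{\mathscr B}) = 0$, so $\mathscr L^\prime|_{\mathscr B} \cong \mathscr O_{\mathscr B}$. Let $f: \mathscr C^\prime \to \mathscr C^{\prime\prime}$ be the contraction of $\mathscr B$ on the level of $T$-families of pointed nodal curves (which exists as a standard fact for rational bridges). I would then define $\mathscr L^{\prime\prime} := f_* \mathscr L^\prime$; since $Rf_* \mathscr O_{\mathscr C^\prime} = \mathscr O_{\mathscr C^{\prime\prime}}$ and $\mathscr L^\prime|_{\mathscr B}$ is trivial, the projection formula gives that $\mathscr L^{\prime\prime}$ is a line bundle on $\mathscr C^{\prime\prime}$ with $f^* \mathscr L^{\prime\prime} \cong \mathscr L^\prime$. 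To descend the sections, observe that $\varphi_i^\prime|_{\mathscr B}$ is a section of the trivial bundle $(\mathscr L^\prime \otimes (\omega^{\log}_{\mathscr C^\prime/T})^{\otimes c_i})|_{\mathscr B} \cong \mathscr O_{\mathscr B}$, hence a constant. Prestability of $\xi^\prime$ forces $\varphi^\prime(p) \neq 0$ at each of the two nodes $p$ of $\mathscr B$; thus each $\varphi^\prime_i|_{\mathscr B}$ is a nowhere-vanishing constant or identically zero, and together they descend to $\varphi^{\prime\prime} = f_*\varphi^\prime$ on $\mathscr C^{\prime\prime}$ with no new base points. This gives $\xi^{\prime\prime} = (\mathscr C^{\prime\prime}, \Sigma^{\mathscr C^{\prime\prime}}, \mathscr L^{\prime\prime}, \varphi^{\prime\prime})$ extending $\xi_\eta$.

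The main obstacle is making the descent of $\varphi^\prime$ rigorous and verifying that $\xi^{\prime\prime}$ still satisfies the prestability condition at the image point $q$; this amounts to showing that the constant values of $\varphi^\prime_i|_{\mathscr B}$ assemble, via $f_*$, into a section of $\mathscr L^{\prime\prime} \otimes (\omega^{\log}_{\mathscr C^{\prime\prime}/T})^{\otimes c_i}$ whose joint zero set at $q$ is empty — which follows immediately from nonvanishing at the nodes of $\mathscr B$ and the compatibility $f^* \omega^{\log}_{\mathscr C^{\prime\prime}/T}|_{\mathscr B} \cong \omega^{\log}_{\mathscr C^\prime/T}|_{\mathscr B}$ afforded by the contraction.
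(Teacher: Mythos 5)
Your proof is correct and follows essentially the same route as the paper: the ``only if'' direction via the uniqueness part of Lemma~\ref{lem:extending-L-and-varphi} forcing $\mathscr L^\prime \cong f^*\mathscr L^{\prime\prime}$, and the ``if'' direction by pushing forward $\mathscr L^\prime$ and $\varphi^\prime$ along the contraction, using the triviality of $\mathscr L^\prime|_{\mathscr B}$ and of $\omega^{\log}_{\mathscr C^\prime/T}|_{\mathscr B}$. The only cosmetic difference is that where you invoke $Rf_*\mathscr O_{\mathscr C^\prime} = \mathscr O_{\mathscr C^{\prime\prime}}$ and the projection formula (which, strictly speaking, presupposes that $\mathscr L^\prime$ already descends), the paper cites Knudsen's Corollary~1.5(iii) to see that $f_*\mathscr L^\prime$ has rank one at every closed point; the substance is identical.
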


\begin{proof}
  We prove the ``if'' part. Let $f:\mathscr C^\prime \to \mathscr C^{\prime\prime}$
  be the contraction of $\mathscr B$ to some point $q$ and set $\mathscr L^{\prime\prime} =
  f_*\mathscr L^\prime$. We claim that $\mathscr L^{\prime\prime}$ is a line bundle.
  Indeed, since $\mathscr B$ is a rational bridge and $\mathscr L^\prime|_{\mathscr B}$
  is trivial, applying \cite[Corollary~1.5(iii)]{knudsen1983projectivity} we see
  that $f_*\mathscr L^\prime|_{q}$ has rank $1$ for any closed point $q\in \mathscr
  C^{\prime\prime}_{\zeta}$. Since $\mathscr C^{\prime\prime}$  is reduced, the
  claim is proved. Using $f^*\omega_{\mathscr C^{\prime\prime}/T}^{\log}\cong
  \omega_{\mathscr C^{\prime}/T}^{\log}$, we set $\varphi^{\prime\prime} =
  f_*\varphi^\prime$. Since $\varphi^\prime|_{\mathscr B}$ is nonvanishing,
  $\varphi^{\prime\prime}$ is nonzero at the node $q$. Thus we have constructed
  $\xi^{\prime\prime}$ and it is prestable.

  For the ``only if'' part, using the uniqueness part of
  Lemma~\ref{lem:extending-L-and-varphi}, it is easy to see that $\mathscr
  L^\prime \cong f^*\mathscr L^{\prime\prime}$. Thus $\mathscr
  L^{\prime}|_{\mathscr B}$ must be trivial.
\end{proof}

\begin{lemm}
  \label{lem:persistence-of-base-points}
  For any contraction $f:\xi^\prime\to \xi^{\prime\prime}$, if $\xi^\prime$
  violates $\Omega$-2, so does $\xi^{\prime\prime}$.
\end{lemm}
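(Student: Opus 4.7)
The plan is to reduce to the case of an elementary contraction and then exploit the two preceding lemmas. First I would observe that any $\Omega$-2 violation at a non-special point $x' \in \mathscr C'$ forces $x'$ to be a base point of $\xi'$ in the strong sense that $\varphi'_i(x')=0$ for all $i$. Indeed, if some $\varphi'_j(x')\ne 0$, then $\mathrm{ord}_{x'}(\varphi'_j)+c_j=c_j\le s_{\max}<A$, so $\Omega$-2 automatically holds. Next, by standard facts about birational morphisms of families of nodal curves (identity on generic fiber), $f$ factors as a finite sequence of elementary contractions, each contracting a single rational tail or a single rational bridge. By induction on the number of steps, it suffices to treat the case of a single elementary contraction.

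Suppose $f$ contracts a rational bridge $\mathscr B$. Then by Lemma~\ref{lem:contracting-a-chain}, $\mathscr L'|_\mathscr B$ is trivial, and since $\mathscr B$ has two special points, $\omega^{\log}_{\mathscr C'/T}|_\mathscr B$ is trivial as well. Hence each $\varphi'_i|_\mathscr B$ is a constant section. If $x' \in \mathscr B$ were a base point, all $\varphi'_i|_\mathscr B$ would vanish identically, contradicting the prestability of $\xi'$. So $x'\notin \mathscr B$, and $f$ is a local isomorphism at $x'$, yielding $\xi''$ violates $\Omega$-2 at $x'':=f(x')$ with the same vanishing orders.

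Now suppose $f$ contracts a rational tail $\mathscr E$ meeting the rest at the node $\tilde q$. If $x'\notin\mathscr E$, the same argument as above works. Otherwise $x'\in \mathscr E\setminus\{\tilde q\}$, and I would apply Lemma~\ref{lem:contracting-a-rational-tail}: it suffices to show $\xi'$ satisfies $\Omega$-3 on $\mathscr E$. Since the base locus of $\xi'$ is finite, some $\varphi'_{i_0}$ is not identically zero on $\mathscr E\cong\mathbb P^1$. Then $\mathrm{ord}_{x'}(\varphi'_{i_0})$ is bounded above by the degree of the line bundle containing $\varphi'_{i_0}|_\mathscr E$, namely
\[
  \mathrm{ord}_{x'}(\varphi'_{i_0})\le \deg(\mathscr L'|_\mathscr E)-c_{i_0}.
\]
Combining with the hypothesis $\mathrm{ord}_{x'}(\varphi'_{i_0})+c_{i_0}>A$, we obtain $\deg(\mathscr L'|_\mathscr E)>A$, hence $\deg(\mathscr L'\otimes(\omega^{\log}_{\mathscr C'/T})^{\otimes A}|_\mathscr E)=\deg(\mathscr L'|_\mathscr E)-A>0$, as $\deg(\omega^{\log}_{\mathscr C'/T}|_\mathscr E)=-1$. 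Lemma~\ref{lem:contracting-a-rational-tail} then gives that $\xi''$ violates $\Omega$-2 at $q:=f(\mathscr E)$, which is a smooth non-special point of $\mathscr C''$.

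The step I expect to need the most care is verifying that $f$ factors through elementary contractions in a way compatible with our setup; all other steps reduce to the algebra on $\mathbb P^1$ combined with the two preceding lemmas. Once the factorization is in place, the essential input is simply the sharp inequality on $\mathbb P^1$ relating the vanishing order of a section to the degree of its line bundle, which is precisely what turns a base point on a rational tail into the positivity needed to invoke Lemma~\ref{lem:contracting-a-rational-tail}.
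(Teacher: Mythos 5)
Your proposal is correct and follows essentially the same route as the paper: decompose $f$ into contractions of rational tails and rational bridges and invoke Lemmas~\ref{lem:contracting-a-rational-tail} and~\ref{lem:contracting-a-chain}, the key point in both being that a component carrying an $\Omega$-2 violation must satisfy $\Omega$-3 (so a violating bridge cannot be contracted, and contracting a violating tail produces a new violation at the image point). The only difference is that you spell out the degree computation on $\mathbb P^1$ that the paper dismisses as "straightforward to check."
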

\begin{proof}
  It is straightforward to check that any rational tail or rational bridge
  violating $\Omega$-2 must satisfy $\Omega$-3. Hence by
  Lemmas~\ref{lem:contracting-a-rational-tail} and
  \ref{lem:contracting-a-chain}, contracting a rational tail that violates
  $\Omega$-2 will again violate $\Omega$-2, and a chain of rational bridges satisfying
  $\Omega$-3 cannot be contracted. The contraction $f$ can be decomposed into a
  sequence of contraction of rational tails followed by a sequence of
  contraction of chains of rational bridges. Thus the lemma follows.
\end{proof}

\begin{lemm}
  \label{lem:when-a-rational-tail-is-contracted}
  Suppose $f:\xi^\prime\to \xi^{\prime\prime}$ is a stabilization, then a
  rational tail $\mathscr E\subset \mathscr C^{\prime}_{\zeta}$ is contracted if
  and only if it violates $\Omega$-3.
\end{lemm}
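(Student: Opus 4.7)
\medskip
\noindent
\textbf{Proof proposal.}
The plan is to handle the two directions separately, invoking Lemma~\ref{lem:contracting-a-rational-tail} for the ``only if'' direction and a direct degree comparison together with $\Omega$-stability of $\xi^{\prime\prime}$ for the ``if'' direction.

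For the ``only if'' direction, suppose $f$ contracts $\mathscr E$ to a point $q\in\mathscr C^{\prime\prime}_{\zeta}$. Since a rational tail has exactly one special point (a node, as $\mathscr E$ is not the whole curve and is smooth rational), the point $q$ is not a node and carries no marking; thus $q$ is non-special in $\mathscr C^{\prime\prime}$. Because $\xi^{\prime\prime}$ is $\Omega$-stable, it satisfies $\Omega$-2 at $q$. The ``further'' part of Lemma~\ref{lem:contracting-a-rational-tail} then forces $\xi^\prime$ to violate $\Omega$-3 on $\mathscr E$.

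For the ``if'' direction, suppose $\mathscr E$ violates $\Omega$-3, i.e.\ $\deg(\mathscr L^\prime|_{\mathscr E})+A\deg(\omega^{\log}_{\mathscr C^\prime/T}|_{\mathscr E})\le 0$, and suppose for contradiction $f$ does not contract $\mathscr E$. Then $f|_{\mathscr E}:\mathscr E\to f(\mathscr E)$ is a birational morphism onto an irreducible component of $\mathscr C^{\prime\prime}_{\zeta}$, hence an isomorphism since $\mathscr E\cong \mathbb P^1$ and nodal curves are seminormal. The key step is to verify that $f(\mathscr E)$ is again a rational tail in $\mathscr C^{\prime\prime}$: its unique marking/node in $\mathscr C^\prime$ corresponds bijectively to a special point of $f(\mathscr E)$ in $\mathscr C^{\prime\prime}$ (a contracted component adjacent to $\mathscr E$ could only meet $\mathscr E$ at its unique special point, so no new special points appear on $f(\mathscr E)$). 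Consequently
\[
\deg(\omega^{\log}_{\mathscr C^{\prime\prime}/T}|_{f(\mathscr E)}) = \deg(\omega^{\log}_{\mathscr C^\prime/T}|_{\mathscr E}) = -1,
\]
and $\deg(\mathscr L^{\prime\prime}|_{f(\mathscr E)})=\deg(\mathscr L^\prime|_{\mathscr E})$ via the isomorphism $f|_{\mathscr E}$. Hence $f(\mathscr E)$ violates $\Omega$-3 in $\xi^{\prime\prime}$, contradicting the $\Omega$-stability of $\xi^{\prime\prime}$.

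I expect the main technical point to be the verification that $f(\mathscr E)$ remains a rational tail, which amounts to a careful local check that the contracted locus of $f$ can only touch $\mathscr E$ at its original special point. Once this is granted, the computation of $\deg(\mathscr L^{\prime\prime}|_{f(\mathscr E)})$ and $\deg(\omega^{\log}_{\mathscr C^{\prime\prime}/T}|_{f(\mathscr E)})$ is immediate from $f|_{\mathscr E}$ being an isomorphism onto its image, and the rest of the argument is a direct application of Lemma~\ref{lem:contracting-a-rational-tail} and the definition of $\Omega$-stability.
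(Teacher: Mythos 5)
Your overall strategy (both directions by contradiction, using Lemma~\ref{lem:contracting-a-rational-tail} and a degree comparison) is the right one, but each direction has a genuine gap coming from the same oversight: the stabilization $f$ may contract much more than the single tail $\mathscr E$, and your argument implicitly treats $f$ as if it only touched $\mathscr E$ and its attaching node.

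In the ``only if'' direction, the claim that $q=f(\mathscr E)$ is non-special is unjustified: if $f$ contracts a larger connected subcurve containing $\mathscr E$ (say $\mathscr E$ is attached to a component $\mathscr B$ joining two other components, and $\mathscr B\cup\mathscr E$ is contracted), then $q$ is a node of $\mathscr C^{\prime\prime}_{\zeta}$, and in any case the ``further'' part of Lemma~\ref{lem:contracting-a-rational-tail} is only proved for the map contracting exactly $\mathscr E$, so its degree bookkeeping does not apply to a general stabilization. The paper repairs this by factoring $f$ through the contraction $\xi^\prime\to\xi^{\prime\prime\prime}$ of $\mathscr E$ alone (Zariski's main theorem), applying Lemma~\ref{lem:contracting-a-rational-tail} there, and then invoking Lemma~\ref{lem:persistence-of-base-points} to propagate the violation of $\Omega$-2 to $\xi^{\prime\prime}$; you need some version of this two-step argument.

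In the ``if'' direction the gap is more serious: you miss the case where $f$ contracts the \emph{entire} complement $\mathscr C^{\prime}_{\zeta,1}=\overline{\mathscr C^\prime_{\zeta}\setminus\mathscr E}$ to the single point $q$. Then $f(\mathscr E)=\mathscr C^{\prime\prime}_{\zeta}$, the special point of $\mathscr E$ has non-special image, both of your degree identities fail ($\deg(\omega^{\log}_{\mathscr C^{\prime\prime}/T}|_{f(\mathscr E)})$ can be $-2+k$ rather than $-1$, and $\mathscr L^{\prime\prime}|_{f(\mathscr E)}$ absorbs the degree of the contracted part), and in fact $f(\mathscr E)$ then has $A$-degree equal to the total degree $d+A(2g-2+k)>0$, so it does \emph{not} violate $\Omega$-3 and your contradiction evaporates. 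The correct contradiction in that branch, as in the paper, is that $q$ violates $\Omega$-2: since $\xi_\eta$ is stable the total $A$-degree is positive, so $\deg(\mathscr L^{\prime}\otimes(\omega^{\log}_{\mathscr C^{\prime}/T})^{\otimes A}|_{\mathscr C^{\prime}_{\zeta,1}})>0$, and contracting $\mathscr C^{\prime}_{\zeta,1}$ produces a base point of depth exceeding $A$ at $q$, by the same computation as in Lemma~\ref{lem:contracting-a-rational-tail}. Your first branch (where $q$ remains a node) is essentially correct once one notes $f^*\mathscr L^{\prime\prime}\cong\mathscr L^\prime$ via the uniqueness in Lemma~\ref{lem:extending-L-and-varphi}, but the dichotomy on whether $q$ is special must be made explicit and the second branch argued separately.
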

\begin{proof}
  For the ``if'' part, suppose $\mathscr E$ violates $\Omega$-3 but is not
  contracted, we will show that
  $\xi^{\prime\prime}$ cannot be stable. Let $\mathscr C^{\prime}_{\zeta,1} =
  \overline{\mathscr C^\prime_{\zeta} \setminus \mathscr E}$, $\tilde{q} = \mathscr
  C^\prime_{\zeta,1}\cap \mathscr E$ and $q = f(\tilde q)$. If $q$ is a special
  point of $\mathscr C^{\prime\prime}_{\zeta}$, then
  $\varphi^{\prime\prime}(q) \neq 0$. As in the proof of
  Lemma~\ref{lem:contracting-a-rational-tail}, it is easy to see that
  $\deg(\mathscr L^{\prime} \otimes (\omega_{\mathscr
    C^{\prime}/T}^{\log})^{\otimes A}|_{\mathscr E}) = \deg(\mathscr
  L^{\prime\prime} \otimes (\omega_{\mathscr
    C^{\prime\prime}/T}^{\log})^{\otimes A}|_{f(\mathscr E)})$. Hence
  $f(\mathscr E)$ also violates $\Omega$-3. If $q$ is nonspecial, then $f(\mathscr
  C^{\prime}_{\zeta,1}) = q$. We claim that $q$ must violate $\Omega$-2. Indeed,
  since $\xi_{\eta}$ is stable. We have $\deg(\mathscr L^{\prime} \otimes
  (\omega_{\mathscr C^{\prime}/T}^{\log})^{\otimes A}|_{\mathscr C^{\prime}_{\zeta}}) >
  0$. Combining with $\deg(\mathscr L^{\prime} \otimes (\omega_{\mathscr
    C^{\prime}/T}^{\log})^{\otimes A}|_{\mathscr E})< 0$, we have $\deg(\mathscr
  L^{\prime} \otimes (\omega_{\mathscr C^{\prime}/T}^{\log})^{\otimes
    A}|_{\mathscr C^{\prime}_{\zeta,1}})> 0$. Thus, an argument similar to the proof of
  Lemma~\ref{lem:contracting-a-rational-tail} shows that $q$ must violate
  $\Omega$-2.

  For the ``only if'' part, suppose that $\mathscr E$ satisfies $\Omega$-3 and is
  contracted, we show that $\xi^{\prime\prime}$ is unstable. Indeed, let
  $\xi^{\prime\prime\prime}$ be obtained from $\xi^{\prime}$ by contracting
  $\mathscr E$. By Lemma~\ref{lem:contracting-a-rational-tail},
  $\xi^{\prime\prime\prime}$ violates $\Omega$-2.
  Since $f$ contracts $\mathscr E$, by Zariski's main theorem,
  it factors through a contraction $\xi^{\prime\prime\prime}\to
  \xi^{\prime\prime}$.
  By Lemma~\ref{lem:persistence-of-base-points}, $\xi^{\prime\prime}$ violates
  $\Omega$-2.
\end{proof}

\begin{lemm}
  \label{lem:when-a-rational-bridge-is-contracted}
  Suppose $f:\xi^\prime \to \xi^{\prime\prime}$ is a stabilization and suppose
  that every rational tail $\mathscr E\subset \mathscr C^{\prime}_{\zeta}$
  satisfies $\Omega$-3, then a rational bridge $\mathscr B\subset \mathscr
  C^{\prime}_{\zeta}$ is contracted if and only if it violates $\Omega$-3.
\end{lemm}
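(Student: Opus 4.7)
The plan is to reduce $\Omega$-3 on $\mathscr B$ to a numerical criterion and then peel off the factorization of $f$ using the preceding lemmas. On a rational bridge $\mathscr B$, $\omega^{\log}_{\mathscr C^\prime/T}|_{\mathscr B}$ has degree $0$, so $\Omega$-3 on $\mathscr B$ amounts to $\deg(\mathscr L^\prime|_{\mathscr B}) > 0$. By Lemma~\ref{lem:lower-bound-on-degree-of-line-bundles}(i), applied with $\mathscr B \in \Delta_0$ and using $u^*L_\vartheta = \mathscr L^\prime$ in the $\mathbb P^{N-1}$ setup of this section, the inequality $\deg(\mathscr L^\prime|_{\mathscr B}) \geq 0$ holds automatically. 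Hence $\mathscr B$ violates $\Omega$-3 if and only if $\deg(\mathscr L^\prime|_{\mathscr B}) = 0$, which by Lemma~\ref{lem:contracting-a-chain} is precisely the criterion for the existence of a contraction collapsing $\mathscr B$ alone.

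I would then factor $f$ as a sequence of elementary contractions of rational tails or rational bridges, as furnished by Lemmas~\ref{lem:contracting-a-rational-tail} and~\ref{lem:contracting-a-chain}. By the standing hypothesis of the lemma combined with Lemma~\ref{lem:when-a-rational-tail-is-contracted}, no rational tail of $\xi^\prime$ is collapsed. The crucial observation is that collapsing a rational bridge merely identifies its two adjacent nodes into one, and thus preserves the special-point count of every other component. By induction on the factorization, no intermediate rational tails are ever created, and the factorization of $f$ consists purely of rational-bridge contractions. In particular, every non-collapsed component of $\mathscr C^\prime_\zeta$ retains its special-point structure from $\xi^\prime$, and the restriction of $\mathscr L$ to such a component is unchanged throughout the factorization (each collapsed bridge having $\mathscr L$-degree zero).

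With this structural information, both directions are immediate. For the ``only if'' direction, if $\mathscr B$ is collapsed, then at the elementary step of its collapse Lemma~\ref{lem:contracting-a-chain} forces $\deg(\mathscr L|_{\mathscr B}) = 0$; by the preservation noted above this equals $\deg(\mathscr L^\prime|_{\mathscr B})$, so $\mathscr B$ violates $\Omega$-3 in $\xi^\prime$. For the ``if'' direction, if $\mathscr B$ violates $\Omega$-3, so $\deg(\mathscr L^\prime|_{\mathscr B}) = 0$, and $\mathscr B$ were not collapsed, then $f(\mathscr B) \subset \mathscr C^{\prime\prime}_\zeta$ would be a rational bridge with $\deg(\mathscr L^{\prime\prime}|_{f(\mathscr B)}) = 0$, so $f(\mathscr B)$ itself would violate $\Omega$-3 in $\xi^{\prime\prime}$, contradicting the $\Omega$-stability of $\xi^{\prime\prime}$. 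The main obstacle is the inductive argument in the second paragraph ruling out any rational-tail contractions in the factorization of $f$; once that structural statement is in hand, both implications drop out from the elementary lemmas.
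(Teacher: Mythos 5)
Your proposal is correct and follows essentially the same route as the paper: the hypothesis together with Lemma~\ref{lem:when-a-rational-tail-is-contracted} shows $f$ contracts only chains of rational bridges, the ``only if'' direction then comes from Lemma~\ref{lem:contracting-a-chain}, and the ``if'' direction from the degree-preservation argument showing $f(\mathscr B)$ would still violate $\Omega$-3 in $\xi^{\prime\prime}$. The extra detail you supply (reducing $\Omega$-3 on a bridge to $\deg(\mathscr L^\prime|_{\mathscr B})=0$ and checking no intermediate rational tails appear) is exactly what the paper leaves implicit.
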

\begin{proof}
  By Lemma~\ref{lem:when-a-rational-tail-is-contracted}, $f$ contracts no
  rational tails. Hence it only contracts a disjoint union of chains of rational
  bridges. The ``only if'' part follows immediately from
  Lemma~\ref{lem:contracting-a-chain}; the ``if'' part follows from an argument
  similar to the ``if'' part of
  Lemma~\ref{lem:when-a-rational-tail-is-contracted} in the case where $q$ is special.
\end{proof}

\begin{proof}[Proof for Lemma~\ref{lem:stabilization}]
  Given $\xi^\prime$, we
  first use Lemma~\ref{lem:contracting-a-rational-tail} to contract all the
  rational tails in $\mathscr C^{\prime}_{\zeta}$ that violate $\Omega$-3. We repeat this
  process until all rational tails have the property $\Omega$-3. This gives us
  $\xi^{\prime\prime\prime}$. Then we use Lemma~\ref{lem:contracting-a-chain} to
  contract all the rational bridges that violate $\Omega$-3, obtaining
  $\xi^{\prime\prime}$. Then $\xi^{\prime\prime}$ satisfies $\Omega$-2 and all the
  rational tails and rational bridges in $\mathscr C_{\zeta}^{\prime\prime}$
  satisfy $\Omega$-3. Let $\mathscr C^{\prime\prime}_{\zeta,1}\subset \mathscr
  C^{\prime\prime}_{\zeta}$ be any irreducible component that is neither a
  rational bridge or a rational tail. Then either $\omega_{\mathscr
    C^{\prime\prime}/T}^{\log}|_{\mathscr C^{\prime\prime}_{\zeta,1}}$ is
  positive or $\mathscr C^{\prime\prime}_{\zeta,1} = \mathscr
  C^{\prime\prime}_{\zeta}$. In the first case, since
  $\varphi^{\prime\prime}|_{\mathscr C^{\prime\prime}_{\zeta,1}}\neq 0$,
  $\deg(\mathscr L^{\prime\prime} \otimes (\omega^{\log}_{\mathscr
    C^{\prime\prime}/T})^{\otimes c_i} |_{\mathscr C^{\prime\prime}_{\zeta,1}})
  \geq 0$ for some $i$. Since $c_i<A$, we see that $\mathscr
  C^{\prime\prime}_{\zeta,1}$ satisfies $\Omega$-3. In the second case, $\mathscr
  C^{\prime\prime}_{\zeta,1}$ also satisfies $\Omega$-3 because $\xi_{\eta}$ is
  stable. Hence $\xi^{\prime\prime}$ satisfies $\Omega$-3 and is thus stable. This
  proves the existence.

  For the uniqueness, by Lemma~\ref{lem:when-a-rational-tail-is-contracted}, all
  rational tails violating $\Omega$-3 must be contracted.
  Thus any stabilization $\xi^{\prime\prime}$ factors through
  the $\xi^{\prime\prime\prime}$ above.
  By Lemma~\ref{lem:when-a-rational-tail-is-contracted} again, no rational tails
  in $\xi^{\prime\prime}$ can be contracted.
  Thus
  $\xi^{\prime\prime}$ must be obtained from $\xi^{\prime\prime\prime}$
  by contracting chains of rational bridges. Then by
  Lemma~\ref{lem:when-a-rational-bridge-is-contracted}, those rational bridges
  are precisely those that violate $\Omega$-3. Thus the uniqueness follows from
  Lemma~\ref{lem:extending-L-and-varphi}.
\end{proof}

\subsection{Proof for the properness, the projective space case}
\begin{lemm}
  \label{lem:properness-projective-space-case}
  The DM stack $\mathrm{LGQ}^{\Omega}_{g,k}(\mathbb P^{N-1}, d)$
  is proper over $\mathbb C$.
\end{lemm}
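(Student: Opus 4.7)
\medskip
\noindent\textbf{Proof plan.}
Since Lemma~\ref{lem:DM-type-projective-space-case} already gives that $\mathrm{LGQ}^{\Omega}_{g,k}(\mathbb P^{N-1}, d)$ is a Deligne–Mumford stack of finite type over $\mathbb C$, properness follows from the valuative criterion. So the plan is to verify both existence and uniqueness of extensions across a DVR $T$ with generic point $\eta$ and closed point $\zeta$: for every $\xi_\eta \in \mathrm{LGQ}^{\Omega}_{g,k}(\mathbb P^{N-1}, d)(\eta)$, possibly after a finite base change, there exists a unique extension $\xi \in \mathrm{LGQ}^{\Omega}_{g,k}(\mathbb P^{N-1}, d)(T)$.

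For existence, I would invoke the two main technical ingredients already established. Namely, Lemma~\ref{lem:prestable-reduction} produces, after a finite base change, a prestable extension $\xi'$ of $\xi_\eta$ satisfying $\Omega$-2; then Lemma~\ref{lem:stabilization} produces the unique stabilization $\xi'' \in \mathrm{LGQ}^{\Omega}_{g,k}(\mathbb P^{N-1}, d)(T)$ of $\xi'$. This $\xi''$ is the desired $\Omega$-stable extension.

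For uniqueness, suppose $\xi^1, \xi^2 \in \mathrm{LGQ}^{\Omega}_{g,k}(\mathbb P^{N-1}, d)(T)$ are two $\Omega$-stable extensions of $\xi_\eta$, with underlying pointed nodal curves $(\mathscr C^1, \Sigma^{\mathscr C^1})$ and $(\mathscr C^2, \Sigma^{\mathscr C^2})$. By the standard theory of families of pointed nodal curves, I can find a pointed nodal curve $(\mathscr C'', \Sigma^{\mathscr C''}) \to T$ extending $(\mathscr C_\eta, \Sigma^{\mathscr C_\eta})$ together with $T$-morphisms $\mathscr C'' \to \mathscr C^i$ for $i=1,2$ extending the identity on $\mathscr C_\eta$ (obtained by sufficiently many blowups at points in the special fibers). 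Applying the ``moreover'' part of Lemma~\ref{lem:prestable-reduction} to this $\mathscr C''$ yields a prestable extension $\xi^3$ satisfying $\Omega$-2 equipped with a morphism $\mathscr C^3 \to \mathscr C''$, and hence (by composition) with morphisms $\mathscr C^3 \to \mathscr C^i$ realizing $\xi^i$ as a contraction of $\xi^3$ for $i=1,2$. Since $\xi^1$ and $\xi^2$ are both $\Omega$-stable, each is a stabilization of $\xi^3$, so by the uniqueness clause of Lemma~\ref{lem:stabilization} we obtain a canonical isomorphism $\xi^1 \cong \xi^2$.

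The main potential obstacle is the construction of the common dominating nodal extension $\mathscr C''$, but this is a standard fact for families of pointed nodal curves over a DVR, coming from the existence of a minimal regular model and its contractions. Everything else is a straightforward application of the two key lemmas (Lemmas~\ref{lem:prestable-reduction} and~\ref{lem:stabilization}) that have been established in the preceding subsections.
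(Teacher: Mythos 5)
Your proposal is correct and follows essentially the same route as the paper: existence by applying Lemma~\ref{lem:prestable-reduction} and then Lemma~\ref{lem:stabilization}, and uniqueness by using the ``moreover'' clause of Lemma~\ref{lem:prestable-reduction} to realize any two stable extensions as stabilizations of a common prestable $\xi'$, which you merely spell out in more detail (via the common dominating nodal model) than the paper does. The only step the paper makes explicit that you omit is the preliminary reduction, by a standard gluing argument, to the case where $\mathscr C_\eta \to \eta$ is smooth with connected geometric fiber, which is needed because Lemma~\ref{lem:prestable-reduction} assumes $\mathscr C_\eta$ regular; this is a minor omission.
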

\begin{proof}
By the valuative criterion for properness, we need to show
that, possibly after finite base change, any
$\xi_{\eta}\in\mathrm{LGQ}^{\Omega}_{g,k}(\mathbb P^{N-1}, d)(\eta)$ uniquely
extends to
a $\xi\in \mathrm{LGQ}^{\Omega}_{g,k}(\mathbb P^{N-1}, d)(T)$. For this, by a
standard gluing argument, we can assume that $\sC_\eta \to \eta$ is smooth with
connected geometric fiber.

Then the existence of $\xi$ follows from taking the stabilization of the
$\xi^{\prime}$ in Lemma~\ref{lem:prestable-reduction},
using Lemma~\ref{lem:stabilization}.
The ``moreover'' part of Lemma~\ref{lem:prestable-reduction}
implies that any two extensions of $\xi_{\eta}$ are the stabilizations of
a common $\xi^{\prime}$. Thus the uniqueness of $\xi$ follows from the
uniqueness part of Lemma~\ref{lem:stabilization}.
\end{proof}

\section{The general case}

In the previous sections, we have proved
Theorem~\ref{thm:intro-separated-Deligne--Mumford-finite-type} and
Theorem~\ref{thm:intro-properness} for the case of projective spaces
(Lemma~\ref{lem:DM-type-projective-space-case},
Lemma~\ref{lem:properness-projective-space-case}).
In this section, we deal with the general case.

\begin{theo}[Theorem \ref{thm:intro-separated-Deligne--Mumford-finite-type}]
  \label{1}
  The stack $\mathrm{LGQ}^{\Omega}_{g,k}(X,d)$ is a separated Deligne-Mumford
  stack of finite type over $\mathbb C$.
\end{theo}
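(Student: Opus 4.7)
The plan is to reduce the general case to the projective space case handled in Section~\ref{sec:projective-space-case}, specifically Lemmas~\ref{lem:DM-type-projective-space-case} and \ref{lem:properness-projective-space-case}, by means of a $\Gamma$-equivariant closed embedding of $V$ into a finite-dimensional $\Gamma$-representation. Since $\Gamma$ is reductive and $V$ is affine of finite type, such an embedding $\iota: V \hookrightarrow W$ exists, and $W$ may be decomposed as $W = \bigoplus_{j=1}^N \mathbb C_{\alpha_j}$ with coordinate functions $x_1, \ldots, x_N$ of bi-weights $\alpha_j = a_j\vartheta + c_j\varpi$. By enlarging the set of coordinate functions to include $h^{n} \cdot x_j$ for sufficiently large $n$, where $h$ is a fixed element of $R_+$ of positive $\theta$-weight, we can arrange for a new embedding in which every first component $a_j$ is positive while retaining $\iota$ as a closed immersion of the $\theta$-semistable locus (since $h$ is nonzero at each base-free point of an LG-quasimap).

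Given such an embedding, an LG-quasimap $\xi = (\sC, \Si^\sC, u, \kappa)$ to $X$ naturally induces a line bundle $\sL = u\sta L_\vartheta$ together with sections
\[
  \varphi_j := u\sta x_j \in H^0(\sC, \sL^{\otimes a_j} \otimes (\omega^{\log}_\sC)^{\otimes c_j}), \quad j = 1, \ldots, N,
\]
using $\kappa: u\sta L_\varpi \overset{\cong}{\lra} \omega^{\log}_\sC$. This defines a forgetful morphism from $\mathrm{LGQ}^\Omega_{g,k}(X,d)$ to a moduli of tuples $(\sC, \Si^\sC, \sL, \varphi_1, \ldots, \varphi_N)$ satisfying the analog of the $\Omega$-stability condition of Section~\ref{sec:projective-space-example} (with a mild extension allowing weights $a_j \ge 1$ rather than only $a_j = 1$). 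The fiber of this morphism over a fixed tuple consists of factorizations of the induced morphism $\sC \to [W/\Gamma]$ through the closed substack $[V/\Gamma] \hookrightarrow [W/\Gamma]$, which is a closed condition on the base.

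With this setup in hand, each of the three required properties transfers from the target to $\mathrm{LGQ}^\Omega_{g,k}(X,d)$: the target moduli is a separated DM stack of finite type by the generalization of Lemma~\ref{lem:DM-type-projective-space-case}, and the forgetful morphism is representable by closed immersions. Finiteness of automorphisms follows by tracing an automorphism of $\xi$ through the sections $\varphi_j$ and invoking the argument of Lemma~\ref{lem:finite-automorphisms-P-N-1-case}; boundedness follows from $\Omega$-3 bounding $\deg \sL$; and separatedness follows from the valuative criterion applied to the target moduli combined with the closedness of $\iota(V) \sub W$.

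The main obstacle lies in Step~1: one must arrange the embedding so that all $a_j > 0$ without breaking the closed immersion structure on the relevant locus, which requires enlarging rather than simply modifying the coordinate set. A secondary technical point is the required generalization of the arguments in Section~\ref{sec:projective-space-case} to allow arbitrary positive weights $a_j$ rather than the uniform weight $1$ treated there; this extension is essentially formal but should be checked carefully, in particular to ensure that the stabilization procedure and base-point analysis of Section~\ref{sec:projective-space-case} go through unchanged when the $\varphi_j$'s are sections of distinct line bundles $\sL^{\otimes a_j} \otimes (\omega^{\log}_\sC)^{\otimes c_j}$.
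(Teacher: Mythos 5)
Your reduction to the projective-space case is the right instinct, and it is indeed what the paper does, but the map you use to get there does not work. You propose to embed $V$ equivariantly into a $\Gamma$-representation $W$ and assume its coordinates have bi-weights $a_j\vartheta + c_j\varpi$. This fails in general: $\widehat{\Gamma}$ is usually strictly larger than the subgroup generated by $\vartheta$ and $\varpi$ (already for MSP, $\Gamma=(\mathbb C^*)^3$ while $\vartheta,\varpi$ span rank $2$), and for non-abelian $\Gamma$ one cannot diagonalize at all. A coordinate of weight $\alpha\notin\mathbb Z\vartheta+\mathbb Z\varpi$ pulls back to a section of $u^*L_\alpha$, which is \emph{not} expressible through $\mathscr L=u^*L_\vartheta$ and $\omega^{\log}_{\mathscr C}$, so your sections $\varphi_j$ do not exist. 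The paper avoids this by using the finite set $S=\{f_1,\dots,f_N\}$ of bi-homogeneous semi-invariants itself (normalized, after the reductions of Section~\ref{sec:quick-reduction-general-case}, to lie in $R_{\vartheta,c_i\epsilon}$) as the components of a map $f:V\to\mathbb C^N$; this is equivariant only for $(\vartheta,\epsilon):\Gamma\to(\mathbb C^*)^2$ and is generally far from a closed embedding, but it is exactly what makes the $\Omega$-stability on the source match the $\Omega$-stability on $\mathbb P^{N-1}$ (Lemma~\ref{lem:stable-locus-coincides}). Relatedly, your claim that the forgetful morphism is ``representable by closed immersions'' is not correct: the fiber of $\Phi$ over a tuple $(\underline{\mathscr C},\mathscr L,\varphi)$ must recover the entire principal $\Gamma$-bundle, the stacky structure of $\mathscr C$ at special points, and the factorization through $[V/\Gamma]$; the paper handles this via the local valuative criterion for $\Phi$ and the equivalence of Lemma~\ref{lem:map-vs-LG}, which trades LG-quasimaps for honest representable maps to $X_f$ on opens where $\omega^{\log}$ is trivialized.

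The second genuine gap is boundedness. You assert it ``follows from $\Omega$-3 bounding $\deg\mathscr L$,'' but $\Omega$-3 only bounds $\deg u^*L_\vartheta$ (and $\deg u^*L_\epsilon$ is controlled by the curve); one must still bound the degrees of the underlying principal $\Gamma$-bundle in all the remaining directions of $\mathrm{Hom}(\widehat\Gamma,\mathbb Q)$. This is the content of Lemmas~\ref{lem:bounding-theta_i} and \ref{lem:boundedness-key}, requires choosing auxiliary characters $\theta_1,\dots,\theta_n$ with $V^{\mathrm s}(\theta)\subset V^{\mathrm{ss}}(\theta_i)$, and in the non-abelian case is hard enough that the paper defers it to \cite{boundedness-paper} (the authors note the analogous step in the quasimap literature had a gap). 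So while your overall architecture parallels the paper's, the two load-bearing steps --- constructing the comparison morphism and establishing boundedness --- are not supplied by your argument as written.
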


\begin{theo}[Theorem \ref{thm:intro-properness}]
  \label{2}
  Suppose $V\git_{\theta}G$ is projective, and suppose $S$ is full, then
  $\mathrm{LGQ}^{\Omega}_{g,k}(X, d)$ is proper over $\mathbb C$.
\end{theo}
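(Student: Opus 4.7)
Both theorems will be reduced to the projective space case of Section~\ref{sec:projective-space-case} (Lemmas~\ref{lem:DM-type-projective-space-case} and \ref{lem:properness-projective-space-case}) by constructing a morphism from $\mathrm{LGQ}^{\Omega}_{g,k}(X,d)$ to a moduli of LG-quasimaps to a projective space with $R$-charges. Writing $S=\{f_1,\ldots,f_N\}$ with $f_i\in R_{k_i\vartheta,c_i\varpi}$, set $k=\mathrm{lcm}(k_1,\ldots,k_N)$, $a_i=k/k_i$, and $\tilde f_i=f_i^{a_i}$; these share the uniform $\theta$-weight $k$ and have bi-degree $(k,a_ic_i)$. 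For any $\xi=(\mathscr C,\Sigma^{\mathscr C},u,\kappa)\in\mathrm{LGQ}^{\Omega}_{g,k}(X,d)$, pullback along $u$ yields
\[
  \Phi(\xi):=\bigl(\mathscr C,\Sigma^{\mathscr C},(u\sta L_\vartheta)^{\otimes k}, u\sta\tilde f_1,\ldots,u\sta\tilde f_N\bigr),
\]
which is a prestable LG-quasimap to $\PP^{N-1}$ with $R$-charges $(a_1c_1,\ldots,a_Nc_N)$ in the sense of Section~\ref{sec:projective-space-example} (up to the harmless replacement of $G$-weight $1$ by uniform weight $k$ on each coordinate, which only introduces a $\mu_k$-gerbe structure and leaves the analysis of Section~\ref{sec:projective-space-case} intact). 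Since $\slope(\tilde f_i)=\slope(f_i)$, a direct comparison using the rescaling $A\mapsto kA$ shows that $\Omega$-stability of $\xi$ corresponds exactly to $\Omega'$-stability of $\Phi(\xi)$ with $\Omega'=(\{x_1,\ldots,x_N\},kA,\vartheta)$, giving a morphism
\[
  \Phi:\mathrm{LGQ}^{\Omega}_{g,k}(X,d)\lra \mathrm{LGQ}^{\Omega'}_{g,k}(\PP^{N-1},kd).
\]

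\textbf{Applying $\Phi$.} For Theorem~\ref{1}, one verifies that $\Phi$ is representable, separated, and of finite type. The fiber over a fixed image parametrizes (i) the choice of a $k$-th root $\mathscr L$ of the given line bundle, a finite torsor under $\mathrm{Pic}(\mathscr C)[k]$, and (ii) a $\Gamma$-equivariant lift of $(u\sta\tilde f_i)$ to a morphism $\mathscr C\to[V/\Gamma]$, which is a closed condition since the defining relations of $V$ inside $\operatorname{Spec}\mathbb C[\tilde f_1,\ldots,\tilde f_N]$ translate into vanishing conditions on the sections. Combined with Lemma~\ref{lem:DM-type-projective-space-case}, this proves Theorem~\ref{1}. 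For Theorem~\ref{2}, we invoke the valuative criterion. Given $\xi_\eta$, Lemma~\ref{lem:properness-projective-space-case} applied to $\Phi(\xi_\eta)$ supplies (after finite base change) a unique extension $(\mathscr C_T,\Sigma^{\mathscr C_T},\mathscr M,\varphi_1,\ldots,\varphi_N)\in\mathrm{LGQ}^{\Omega'}_{g,k}(\PP^{N-1},kd)(T)$. After a further base change one extracts a $k$-th root $\mathscr L$ of $\mathscr M$ agreeing with $u_\eta\sta L_\vartheta$ generically; the task then reduces to extending the map $u_\eta:\mathscr C_\eta\to[V/\Gamma]$ to a map $u:\mathscr C_T\to[V/\Gamma]$ satisfying $u\sta\tilde f_i=\varphi_i$.

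\textbf{Main obstacle.} The crux is this last extension. At any point $x\in\mathscr C_\zeta$ where the extension is needed, fullness of $S$ ensures some $\varphi_{i_0}$ is nonvanishing near $x$, so the image lies in the affine open $U_{i_0}=\{f_{i_0}\ne 0\}\sub V^{\mathrm{ss}}(\theta)$. Since $V\git_\theta G$ is projective, the induced map $\mathscr C_\eta\to V\git_\theta G$ extends to $\mathscr C_T\to V\git_\theta G$; combined with the fact that $U_{i_0}\to U_{i_0}\git_\theta G$ is an affine $G$-torsor, a Hartogs-type argument recovers the extension $u$ with image in $V$. Uniqueness of the extension follows from the separatedness established in Theorem~\ref{1}. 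This closes the reduction to the projective space case.
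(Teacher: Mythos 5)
Your overall strategy --- use $S$ to build a comparison morphism $\Phi$ to a moduli of LG-quasimaps to $\mathbb P^{N-1}$ with $R$-charges and pull properness back along $\Phi$, after replacing the $f_i$ by powers so that they share a common $\theta$-weight --- is exactly the paper's. But two steps you treat as routine are the actual hard points, and as written they do not go through. First, finite type: you claim the fibers of $\Phi$ are controlled because ``the defining relations of $V$ inside $\operatorname{Spec}\mathbb C[\tilde f_1,\ldots,\tilde f_N]$ translate into vanishing conditions.'' But $V$ does not sit inside $\operatorname{Spec}\mathbb C[\tilde f_1,\ldots,\tilde f_N]$: the $f_i$ are merely finitely many semi-invariants cutting out $V^{\mathrm{us}}(\theta)$ set-theoretically, and their pullbacks determine neither the principal $\Gamma$-bundle underlying $u$ nor the remaining data of $u$. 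The fiber of $\Phi$ parametrizes all lifts of a map to $[\mathbb C^N/(\mathbb C^*)^{\times 2}]$ to a map to $[V/\Gamma]$, and bounding the possible underlying $\Gamma$-bundles is precisely the boundedness problem the paper treats with a separate proposition (bounding $\deg_{\xi,\mathscr C^\prime}(\vartheta_i)$ for a spanning set of characters $\theta_i$ with $V^{\mathrm s}(\theta)\subset V^{\mathrm{ss}}(\theta_i)$) and, in the non-abelian case, the companion paper \cite{boundedness-paper}. Properness requires finite type, so this cannot be omitted.

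Second, the extension step in your ``Main obstacle'' paragraph. The map $U_{i_0}\to U_{i_0}\git_{\theta} G$ is not a $G$-torsor: points of $V^{\mathrm s}(\theta)$ may have nontrivial finite stabilizers, and this is exactly why the limiting LG-quasimap may live on a twisted curve with new orbifold points on the special fiber, not present in $\mathscr C_T$ and obtainable only after a further finite base change. One cannot in general extend $u_\eta$ to a map from $\mathscr C_T$ itself; the twisted curve must be allowed to change while keeping $\mathscr C_T$ (or rather its coarse model) as coarse moduli. Relatedly, your $\Phi(\xi)$ keeps the twisted curve $\mathscr C$, whereas an LG-quasimap to the scheme $\mathbb P^{N-1}$ must live on a scheme; the paper pushes forward to the coarse curve, using that $u^*L_{\vartheta}$ has trivial monodromy at special points. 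The paper resolves the lifting problem by working Zariski-locally where $\omega^{\log}$ is trivial, converting LG-quasimaps into honest representable maps to $X_f$ (Lemma~\ref{lem:map-vs-LG}), extending the induced map to the coarse space $\underline X_f$ (a closed subscheme of $\mathbb P^{N-1}$ after arranging that $S$ generates $R_\bullet$), and then invoking the proof of \cite[Proposition~6.0.1]{abramovich2002compactifying} to produce the twisted curve and the representable lift. A Hartogs-type argument on the fixed family $\mathscr C_T$ cannot produce the required stacky structure.
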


\subsection{Boundedness}
The proof of boundedness turns out to be quite involved, especially in the
non-abelian case.
We  give a complete proof for the abelian case in this paper, and put
the proof of a key lemma for the non-abelian case in a separate paper
\cite{boundedness-paper}.

The proof should have been a straightforward generalization of the boundedness for
stable quasimaps. However, after discussion with the authors of \cite{ciocan2014stable},
we confirmed that there was a gap in their proof of boundedness.
Indeed, even for one fixed principal bundle, the set of all possible
reductions of the structure group to a given Borel subgroup could be unbounded.
Hence we need to find a new proof from the beginning.
As a byproduct, since stable quaismaps is a special case of LG-quasimaps
(Section~\ref{sec:stable-quasimaps-as-a-special-case}), the result of
\cite{boundedness-paper} also implies the boundedness for stable quasimaps.

\begin{prop}
  The moduli $\mathrm{LGQ}^{\Omega}_{g,k}(X, d)$ is bounded.
\end{prop}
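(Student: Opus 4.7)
The plan is to bound an $\Omega$-stable LG-quasimap $\xi=(\mathscr{C},\Sigma^{\mathscr{C}},u,\kappa)$ in three stages: bounding the combinatorial type of $(\mathscr{C},\Sigma^{\mathscr{C}})$ together with $\deg u^*L_\vartheta$ on each irreducible component, then bounding the underlying principal $\Gamma$-bundle of $u$, and finally bounding the remaining section data.

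Stage (1) follows from Lemma~\ref{lem:bounding-the-order-of-isotropy}, which bounds orbifold isotropy orders by $N_0$, together with the lemma (stated just before Corollary~\ref{cor:stability-condition-A-infinity}) that produces constants $C_1,C_2$ uniformly bounding $\#\mathrm{Irre}(\xi)$ and $|\deg(u^*L_\vartheta|_{\mathscr{B}})|$. Together these confine the pair $(\mathscr{C},\Sigma^{\mathscr{C}})$ and the line bundle $u^*L_\vartheta$ to finitely many families. Stage (3) is standard: once the principal $\Gamma$-bundle $\mathscr{P}$ varies in a bounded family, the $\Gamma$-equivariant morphism $\mathscr{P}\to V$ is a section of the associated scheme $\mathscr{P}\times_\Gamma V\to\mathscr{C}$, whose parameter space is of finite type because $V$ is affine and of finite type.

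The heart of the argument is Stage (2), where the abelian hypothesis enters. When $\Gamma\cong(\mathbb{C}^*)^n$, $\mathscr{P}$ is equivalent to an $n$-tuple $(\mathscr{L}_1,\dots,\mathscr{L}_n)$ of line bundles on $\mathscr{C}$. I would extend $\{\varpi,\vartheta\}$ to a $\mathbb{Q}$-basis of $\widehat{\Gamma}\otimes\mathbb{Q}$, so that it suffices to bound $\deg(u^*L_\chi|_{\mathscr{B}})$ for each remaining basis character $\chi$ and each $\mathscr{B}\in\mathrm{Irre}(\xi)$. Fix a $\Gamma$-equivariant closed embedding $\iota:V\hookrightarrow W=\bigoplus_j\mathbb{C}_{\alpha_j}$, which exists since $V$ is affine and $\Gamma$ is reductive, and let $s_j\in H^0(\mathscr{C},u^*L_{\alpha_j})$ be the $j$-th coordinate of $\iota\circ u$. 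The prestability of $\xi$ forces $u|_{\mathscr{B}}$ to map generically into $[V^{\mathrm{ss}}(\theta)/\Gamma]$, and since $V^{\mathrm{ss}}(\theta)$ is not contained in $\iota^{-1}(0)$, at least one $s_j|_{\mathscr{B}}$ is not identically zero, yielding $\deg(u^*L_{\alpha_j}|_{\mathscr{B}})\geq 0$. Combined with the $C_2$-bound on $\deg(u^*L_\vartheta|_{\mathscr{B}})$ and the fixed value $\deg u^*L_\varpi|_{\mathscr{B}}=\deg\omega^{\log}_{\mathscr{C}}|_{\mathscr{B}}$, writing each basis character $\chi$ as a $\mathbb{Q}$-linear combination of $\{\alpha_j\}\cup\{\varpi,\vartheta\}$ produces two-sided bounds on $\deg(u^*L_\chi|_{\mathscr{B}})$, and hence on each $\deg\mathscr{L}_i|_{\mathscr{B}}$.

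The main obstacle is Stage (2), and specifically ensuring that $\{\alpha_j\}\cup\{\varpi,\vartheta\}$ spans a finite-index subgroup of $\widehat\Gamma$, without which the $\mathbb{Q}$-linear expansion of $\chi$ is unavailable. This holds whenever $\Gamma$ acts faithfully on $V$, an assumption one may always make after quotienting by the ineffective kernel. In the nonabelian case this torus-based decomposition of the principal bundle fails, and the corresponding bound on $\mathscr{P}$ is the key lemma proved in \cite{boundedness-paper}.
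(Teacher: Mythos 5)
Your Stages (1) and (3) match the paper's reduction: the underlying curves are bounded via Lemma~\ref{lem:bounding-the-order-of-isotropy} together with $\Omega$-3, and once the principal $\Gamma$-bundle varies in a bounded family the section data is bounded because $V$ is affine. The gap is in Stage (2). From prestability you extract only that \emph{one} coordinate $s_j|_{\mathscr B}$ is not identically zero, hence a single inequality $\deg(u^*L_{\alpha_j}|_{\mathscr B})\ge 0$, with the index $j$ depending on $\mathscr B$. Together with the two-sided bounds on $d(\vartheta)$ and $d(\varpi)$, where $d=\deg_{\xi,\mathscr B}\in\mathrm{Hom}(\widehat\Gamma,\mathbb Q)$, this only confines $d$ to a half-space intersected with a slab, which is unbounded as soon as the rank of $\widehat\Gamma$ is at least $3$. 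For instance, in the MSP package of Section~\ref{sec:brief} the constraint set $d_1+2d_2\in[-C,C]$, $d_3$ fixed, $d_2\ge 0$ (coming from $\nu\not\equiv 0$ alone) still admits $(d_1,d_2)=(-2m,m)$ for all $m$. The subsequent step --- ``writing each basis character $\chi$ as a $\mathbb Q$-linear combination of $\{\alpha_j\}\cup\{\varpi,\vartheta\}$ produces two-sided bounds'' --- is invalid: a linear combination with coefficients of both signs of quantities that are merely bounded below (and only one of which is controlled at all) is not bounded. Your closing concern about whether $\{\alpha_j\}\cup\{\varpi,\vartheta\}$ spans $\widehat\Gamma\otimes\mathbb Q$ is a side issue; spanning does not supply the missing upper bounds.

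What is needed is a lower bound in \emph{enough independent directions} plus a convexity argument showing the resulting polyhedron is bounded, and this is exactly how the paper proceeds: it chooses characters $\theta_1,\ldots,\theta_n\in\widehat G$ spanning $\widehat G\otimes\mathbb R$ with $\theta\in\sum_i\mathbb R_{>0}\theta_i$ and $V^{\mathrm s}(\theta)\subset V^{\mathrm{ss}}(\theta_i)$. Semistability for each $\theta_i$ supplies a $\Gamma$-invariant section of some $L_{k\vartheta_i+c\epsilon}$ nonvanishing at a generic point of $u|_{\mathscr B}$, giving $d(\vartheta_i)\ge -c d_2/k$ for \emph{every} $i$ (Lemma~\ref{lem:bounding-theta_i}); the condition $\theta\in\sum_i\mathbb R_{>0}\theta_i$ then turns the bound on $d(\vartheta)$ into upper bounds on each $d(\vartheta_i)$, i.e.\ the origin lies in the interior of the convex hull of $\pm\epsilon,\vartheta,-\vartheta_1,\ldots,-\vartheta_n$ and the degree vector lies in a bounded polytope. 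Your coordinate-embedding idea could in principle be salvaged: letting $J$ be the full set of coordinates nonvanishing at a generic (hence $\theta$-stable) point of $u|_{\mathscr B}$, one gets $d(\alpha_j)\ge 0$ for all $j\in J$, and the Hilbert--Mumford criterion for stability shows that the cone $\{d:d(\alpha_j)\ge 0\ \forall j\in J\}$ meets $\{d(\vartheta)\le 0,\ d(\varpi)=0\}$ only at the origin, whence boundedness. But that extra input --- using all coordinates in the support and the Hilbert--Mumford convexity step --- is precisely what your proposal is missing.
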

To prove the proposition, we first bound the underlying curves.
\begin{lemm}
  \label{lem:boundedness-underlying-curves}
  The set of twisted curves underlying LG-quasimaps in
  $\mathrm{LGQ}_{g,k}^{\Omega}(X,d)$ is bounded.
\end{lemm}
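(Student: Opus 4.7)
The plan is to reduce boundedness of the underlying twisted curves to finiteness of their discrete combinatorial data (number and genera of components, the shape of the dual graph, number of markings and nodes, and orders of isotropy at every special point). Once every such piece of data is confined to a finite set, the claim follows because, for any fixed discrete data, the stack of twisted curves realizing it is of finite type over $\mathbb{C}$, as recalled at the beginning of Section~\ref{sec:projective-space-case} (c.f.~\cite{abramovich2002compactifying}, \cite[Lemma~2.8]{cheong2015orbifold}).

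First, I would invoke the lemma established just before Corollary~\ref{cor:stability-condition-A-infinity}, which gives a uniform bound $\#\Irre(\xi)\le C_1$ with $C_1$ depending only on $S,\vartheta,g,k,d$. Since the arithmetic genus is the fixed integer $g$, this in turn bounds the number of nodes: for a connected nodal (coarse) curve of arithmetic genus $g$ with $n$ components of geometric genera $g_1,\ldots,g_n$, the number of nodes equals $g-\sum_i g_i+n-1\le g+C_1-1$. Together with the fixed number $k$ of markings, this yields a uniform bound on the number of special points and leaves only finitely many combinatorial types for the dual graph of $\sC$, together with finitely many possible assignments of the component genera.

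Second, Lemma~\ref{lem:bounding-the-order-of-isotropy} bounds $\#\Aut(x)\le N_0$ at every orbifold point $x$ of $\sC$, where $N_0$ depends only on the package. Since orbifold points of a balanced twisted curve occur only at markings and nodes, the isotropy order at each special point takes values in the finite set $\{1,2,\ldots,N_0\}$ (and at nodes the two branch orders agree by the balanced condition). Combined with the combinatorial bound above, this leaves only finitely many discrete types for $(\sC,\Sigma^\sC)$. Taking the finite union of the corresponding finite-type stacks of twisted curves produces a single bounded family through which the underlying twisted curve of every $\xi\in\mathrm{LGQ}^{\Omega}_{g,k}(X,d)$ factors, proving the lemma. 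No step requires technical input beyond results already in hand; the only bookkeeping point of note is ensuring that the isotropy data at nodes is controlled, which is exactly what Lemma~\ref{lem:bounding-the-order-of-isotropy} provides.
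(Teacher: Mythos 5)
Your proof is correct and follows essentially the same route as the paper: bound the isotropy orders via Lemma~\ref{lem:bounding-the-order-of-isotropy}, bound the number of irreducible components (the paper re-derives this inline from $\Omega$-3 and the bounded denominators of $\deg(u^*L_{\vartheta}\otimes(\omega_{\sC}^{\log})^{\otimes A})$, while you cite the already-proven bound $\#\Irre(\xi)\le C_1$ from Section~\ref{sec:moduli-and-stability}, which rests on the same ingredients), and conclude from the finiteness of the discrete data. The only quibble is the final citation: the statement that twisted curves with bounded genus, markings, components and isotropy orders form a bounded family is \cite[Corollary~1.12]{olsson2007}, not the Artin-stack statement recalled at the start of Section~\ref{sec:projective-space-case}.
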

\begin{proof}
  By Lemma~\ref{lem:bounding-the-order-of-isotropy}, the order of isotropy
  groups at the orbifold points are bounded. In particular, the degree of
  the line bundle $u^*L_{\vartheta} \otimes (\omega_{\mathscr
    C}^{\log})^{\otimes A}$ in Definition~\ref{stability} restricted to any irreducible component
  has bounded denominators. By $\Omega$-3 of
  Definition~\ref{stability}, that line bundle has positive degree on each
  irreducible component. Hence the number of irreducible
  components is also bounded. Hence the underlying curves are
  bounded by Lemma~\ref{lem:bounding-the-order-of-isotropy} and
  \cite[Corollary~1.12]{olsson2007}.
\end{proof}

Now by a standard argument, to prove the proposition it
suffices to show that the
family of underlying principal $\Gamma$-bundles are bounded.
We first explain what this means precisely.
Let $\pi: \Sigma^{\mathscr C} \subset \mathscr C \to S$ be any family of smooth
twisted curves over a finite-type $\mathbb C$-scheme.
For any closed point $s\in S$, let $\mathrm{Bun}_{\Gamma}(\mathscr C_s)$ be the set
of equivalence classes of principal $\Gamma$-bundles on $\mathscr C_s = \pi^{-1}(s)$.
For any subset
\[
  \mathscr S \subset \coprod_{s \in S(\mathbb C)} \mathrm{Bun}_{\Gamma}(\mathscr C_s),
\]
$\mathscr S$ is said to be bounded if there exists a finite-type $S$-scheme
$T$, a principal $\Gamma$-bundle $\mathscr P$ on $\mathscr C\times_{S} T$,
such that for any $\mathscr P_s \in \mathscr S \cap \mathrm{Bun}_{\Gamma}(\mathscr
C_s)$, there exists $t\in T(\mathbb C)$, lying over $s$, such that $\mathscr
P|_{t}$ is isomorphic to $\mathscr P_s$.

To bound the underlying principal bundles, we first bound some topological
invariants. Suppose $\mathscr C$ is a twisted curve and $\mathscr P$ is a principal
$\Gamma$-bundle on $\mathscr C$. Then its degree, denoted by $\deg_{\mathscr
  P} \in \mathrm{Hom}(\widehat{\Gamma}, \mathbb Q)$, is defined to be
\[
  \deg_{\mathscr P} (\alpha) = \deg(\mathscr P\times_{\Gamma} \mathbb
  C_{\alpha}), \quad \alpha \in \widehat{\Gamma}.
\]
More generally, if $\mathscr C^\prime \subset \mathscr C$ is an irreducible
component, then we set
\[
  \deg_{\mathscr P, \mathscr C^\prime} (\alpha) = \deg(\mathscr P\times_{\Gamma} \mathbb
  C_{\alpha} |_{\mathscr C^\prime}), \quad \alpha \in \widehat{\Gamma}.
\]
If $\mathscr P$ is the underlying principal $\Gamma$-bundle of some
LG-quasimap $\xi$, we sometimes also write $\xi$ in place of $\mathscr P$ in the subscript.

\begin{lemm}
  \label{lem:bounding-epsilon-and-vartheta}
  Suppose $\xi \in \mathrm{LGQ}_{g,k}^{\Omega}(X,d)(\mathbb
  C)$ and $\mathscr C^\prime$ is an irreducible component of the domain curve.
  There exists $B \in \mathbb R$, independent of $\xi$ and $\mathscr C^\prime$, such that
  \[
    |\deg_{\xi,
      \mathscr C^\prime} (\epsilon) | < B  \quad \text{and} \quad
    | \deg_{\xi, \mathscr C^\prime} (\vartheta)|  < B.
  \]
\end{lemm}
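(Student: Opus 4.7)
The plan is to deduce this lemma directly from estimates already obtained. The key identification, coming from the isomorphism $\kappa: u^*L_\varpi \to \omega_{\mathscr C}^{\log}$ in the LG-quasimap data (together with $\varpi = \epsilon$ up to the identification $\Gamma/G \cong \mathbb C^*$), is
\[
  \deg_{\xi, \mathscr C'}(\epsilon) = \deg(\omega_{\mathscr C}^{\log}|_{\mathscr C'}).
\]
Thus bounding the $\epsilon$-degree is the same as bounding the log-canonical degree on each irreducible component.

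For the $\epsilon$-bound, I would invoke Lemma~\ref{lem:boundedness-underlying-curves}: the underlying twisted curves form a bounded family, so in particular the number of irreducible components and the integers $\deg(\omega_{\mathscr C}^{\log}|_{\mathscr C'})$ take only finitely many values. Alternatively, one may read this off from the inequalities \eqref{eq:bound-on-negative-omega} and \eqref{eq:bound-on-positive-omega} established inside the proof of the lemma preceding Corollary~\ref{cor:stability-condition-A-infinity}: both $\sum_{\mathscr B\in \Delta_+}\deg(\omega_{\mathscr C}^{\log}|_{\mathscr B})$ and $|\sum_{\mathscr B\in \Delta_-}\deg(\omega_{\mathscr C}^{\log}|_{\mathscr B})|$ are bounded in terms of $s_{\max}, \delta, g, k, d$, while each summand is an integer of the prescribed sign and components in $\Delta_0$ contribute $0$.

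For the $\vartheta$-bound, I would argue in two steps. First, Lemma~\ref{lem:lower-bound-on-degree-of-line-bundles} gives a uniform lower bound
\[
  \deg_{\xi, \mathscr C'}(\vartheta) \;\geq\; -(s_{\max}+\delta)\,\deg(\omega_{\mathscr C}^{\log}|_{\mathscr C'}),
\]
whose right-hand side is bounded in absolute value by the $\epsilon$-bound just established. Second, to get an upper bound on $\deg_{\xi, \mathscr C'}(\vartheta)$ for a given component $\mathscr C'$, I use the global identity $\sum_{\mathscr B\in \mathrm{Irre}(\xi)} \deg_{\xi, \mathscr B}(\vartheta) = d$ together with the lower bounds on the remaining components and the fact that $\#\mathrm{Irre}(\xi)$ is itself uniformly bounded. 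Combining these yields a two-sided bound $|\deg_{\xi, \mathscr C'}(\vartheta)| < B$ with $B$ depending only on $S, \vartheta, g, k, d$.

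There is no real obstacle here: essentially the statement is a repackaging of the unnamed lemma just before Corollary~\ref{cor:stability-condition-A-infinity}, whose proof already combines Lemma~\ref{lem:lower-bound-on-degree-of-line-bundles} with the identity $\sum_{\mathscr B} \deg(\omega_{\mathscr C}^{\log}|_{\mathscr B}) = 2g-2+k$ and $\sum_{\mathscr B} \deg_{\xi, \mathscr B}(\vartheta) = d$. The only point to check is that the constants one extracts are genuinely independent of $\xi$ and $\mathscr C'$ (and not of $A$), which follows since $s_{\max}$ and $\delta$ depend only on $S$ and $\vartheta$.
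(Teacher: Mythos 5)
Your proposal is correct and follows essentially the same route as the paper: the $\epsilon$-bound comes from the boundedness of the underlying curves (Lemma~\ref{lem:boundedness-underlying-curves}), and the $\vartheta$-bound comes from a per-component lower bound combined with the fixed total degree. The paper's version of the second step is slightly more streamlined---it notes that $u^*L_{\vartheta}\otimes(\omega_{\mathscr C}^{\log})^{\otimes A}$ is non-negative on every component by $\Omega$-3 while its total degree is fixed, so its degree on $\mathscr C'$ is bounded and one subtracts the already-bounded $A\deg(\omega_{\mathscr C}^{\log}|_{\mathscr C'})$---whereas you route the lower bound through Lemma~\ref{lem:lower-bound-on-degree-of-line-bundles} and the bound on $\#\mathrm{Irre}(\xi)$; both work.
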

\begin{proof}
  Since the family of
  underlying curves is bounded, $\deg_{\xi,
    \mathscr C^\prime} (\epsilon) = \deg(\omega^{\log}_{\mathscr C}|_{\mathscr
    C^\prime})$ is bounded.
  By the $\Omega$-stability condition, $u^* L_{\vartheta} \otimes (\omega_{\mathscr
    C}^{\log})^{\otimes A}$ has non-negative degree on each irreducible
  component of $\mathscr C$. Since its total degree is fixed, $\deg(u^*L_\vartheta \otimes
  (\omega_{\mathscr C}^{\log})^{\otimes A}|_{\mathscr C^\prime})$ is bounded.
  Hence $\deg_{\xi, \mathscr C^\prime} (\vartheta) = \deg(u^*L_\vartheta
  |_{\mathscr C^\prime})$
  is also bounded.
\end{proof}

By the above discussion, we have reduced the proposition to the following
lemma.
\begin{lemm}
  \label{lem:boundedness-key}
  Let $\pi: \Sigma^{\mathscr C} \subset \mathscr C \to S$ be a family of
  twisted curves over a finite-type $\mathbb C$-scheme.
  Fix any $d_1,d_2 \in \mathbb Z$,
  the set of principal $\Gamma$-bundles $\mathscr P_s$ on $\mathscr C_s$, $s\in
  S(\mathbb C)$, such that
  \begin{itemize}
  \item
    $\mathscr P_s
    \times_{\Gamma} V \to \mathscr C_s$ admits a section $\sigma$ sending
    the generic points of $\mathscr C_s$ into $\mathscr P_s \times_{\Gamma} V^{\mathrm{s}}(\theta)$,
  \item
    for each irreducible component $\mathscr C^\prime \subset \mathscr C_{s}$,
    we have
    \[
      \deg_{\mathscr P_s, \mathscr C^\prime}(\vartheta)  = d_1 \quad \text{and} \quad
      \deg_{\mathscr P_{s}, \mathscr C^\prime}(\epsilon)  = d_2,
    \]
  \end{itemize}
  is bounded.
\end{lemm}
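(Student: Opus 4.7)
The plan is to bound the topological type (multidegree) of the principal $\Gamma$-bundle $\mathscr P_s$, and then invoke the standard fact that the moduli of $\Gamma$-bundles of fixed multidegree on a finite-type family of twisted curves is itself of finite type (a Jacobian-type family, up to an isotropy quotient). First I would replace $\mathscr C \to S$ by the partial normalization of each irreducible component. Because the number of irreducible components of $\mathscr C_s$ is bounded (by Lemma~\ref{lem:bounding-the-order-of-isotropy} and the argument preceding the lemma), and because a principal $\Gamma$-bundle on a nodal twisted curve is determined by its restriction to each component plus gluing data at finitely many nodes (the gluing data lying in a torsor under a bounded quotient of $\Gamma$), this reduction is harmless. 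Hence it suffices, for each smooth geometrically connected twisted curve $\mathscr C'$ arising in the normalization, to bound the multidegree vector $\deg_{\mathscr P} \in \mathrm{Hom}(\hat\Gamma, \mathbb Q)$, whose denominators are already uniformly bounded by the orbifold isotropy orders.

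For the abelian case, which is the content handled in this paper, fix a $\Gamma$-equivariant closed embedding $V \hookrightarrow \mathbb A^N$ with coordinate weights $\chi_1,\dots,\chi_N \in \hat\Gamma$. A section of $\mathscr P_s \times_\Gamma V$ corresponds to a tuple $(\varphi_1,\dots,\varphi_N)$ with $\varphi_i \in H^0(\mathscr C', L_{\chi_i})$ satisfying the equations cutting out $V$ in $\mathbb A^N$. The coordinate strata $\{x_j=0 : j\in J\} \subset V$ contained in $V^{\mathrm{us}}(\theta)$ are precisely those indexed by subsets $J$ whose complement fails to have $\theta|_G$ in the nonnegative cone of its weights; generic semistability of $\sigma$ therefore forces the set $I = \{i : \varphi_i \not\equiv 0\}$ to have $\theta|_G \in \mathbb R_{\geq 0}\{\chi_i|_G : i\in I\}$, and moreover yields the inequalities $\deg_{\mathscr P}(\chi_i) \geq 0$ for every $i \in I$. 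Coupled with the affine equalities $\deg_{\mathscr P}(\vartheta) = d_1$ and $\deg_{\mathscr P}(\epsilon) = d_2$ these inequalities cut out a bounded polytope in $\mathrm{Hom}(\hat\Gamma, \mathbb R)$, provided $\{\chi_i : i\in I\}$ together with $\vartheta$ and $\epsilon$ span $\hat\Gamma_{\mathbb R}$; this spanning holds after harmlessly quotienting by the (finite) kernel of the $\Gamma$-action on $V$. Since there are only finitely many candidate subsets $I$, the multidegree takes finitely many values, completing the abelian case.

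The main obstacle will be the non-abelian case, where the topological type of a $G$-bundle is not a fine enough invariant to control boundedness on its own: even the trivial topological type admits non-bounded families of reductions of structure group to a Borel subgroup, and the generic-semistability of $\sigma$ is too loose a constraint by itself. The strategy, deferred to the companion paper \cite{boundedness-paper}, is to bound the potentially destabilizing parabolic reductions of the $V$-bundle $\mathscr P_s \times_\Gamma V$ by a Harder--Narasimhan-type analysis of the $G$-representation on the coordinate ring of $V$, using the rigidity imposed by the existence of a section landing generically in the semistable locus. Incidentally, since stable quasimaps are a special case of $\Omega$-stable LG-quasimaps (Section~\ref{sec:stable-quasimaps-as-a-special-case}), this argument will also repair the corresponding boundedness claim for quasimaps.
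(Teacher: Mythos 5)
Your overall strategy for the abelian case --- embed $V$ equivariantly in $\mathbb A^N$, read off sign constraints on the multidegree from the coordinates that are not identically zero, and trap the multidegree in a polytope --- is genuinely different from the paper's. The paper instead perturbs the linearization: it chooses characters $\theta_1,\dots,\theta_n$ spanning $\widehat G\otimes\mathbb R$ with $\theta\in\sum_i\mathbb R_{>0}\theta_i$ and $V^{\mathrm s}(\theta)\subset V^{\mathrm{ss}}(\theta_i)$, and uses $\Gamma$-invariant sections of $L_{k\vartheta_i+c\epsilon}$ nonvanishing at a stable point in the image to obtain the uniform lower bounds $\deg_{\mathscr P_s,\mathscr C'}(\vartheta_i)\geq -cd_2/k$, after which the convex-hull condition on $\pm\epsilon,\vartheta,-\vartheta_1,\dots,-\vartheta_n$ gives boundedness. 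Both are convex-geometry arguments in $\mathrm{Hom}(\widehat\Gamma,\mathbb R)$, and yours could be made to work, but as written its decisive step is not justified.

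The gap is in the claim that the polyhedron $\{d:\ d(\chi_i)\ge 0\ (i\in I),\ d(\vartheta)=d_1,\ d(\epsilon)=d_2\}$ is bounded ``provided $\{\chi_i:i\in I\}$ together with $\vartheta$ and $\epsilon$ span $\widehat\Gamma_{\mathbb R}$.'' Linear spanning is not sufficient. The recession cone of this polyhedron is $\{d:\ d(\chi_i)\ge0\ (i\in I),\ d(\vartheta)=d(\epsilon)=0\}$, which (after killing the $\epsilon$-direction) is $\{\bar d:\ \bar d(\bar\chi_i)\ge0\ (i\in I),\ \bar d(\theta)=0\}$ with $\bar\chi_i=\chi_i|_G$. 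If $\theta$ merely lies in $\mathrm{Cone}(\bar\chi_i:i\in I)$ --- which is all that generic \emph{semistability} gives, and is all you invoke --- then $\theta$ may lie on a proper face of that cone, and any supporting functional of that face is a nonzero element of the recession cone, so the polyhedron is unbounded regardless of how $\{\chi_i\}\cup\{\vartheta,\epsilon\}$ spans. What rescues the argument is the standing hypothesis $V^{\mathrm{ss}}(\theta)=V^{\mathrm s}(\theta)$: the generic point of each component lands in the \emph{stable} locus, so by the toric stability criterion its support $I$ satisfies the strictly stronger conditions that $\{\bar\chi_i\}_{i\in I}$ spans $\widehat G\otimes\mathbb R$ and that $\theta$ lies in the \emph{interior} of $\mathrm{Cone}(\bar\chi_i:i\in I)$; then $\bar d(\theta)=0$ together with $\bar d(\bar\chi_i)\ge0$ forces $\bar d(\bar\chi_i)=0$ for all $i\in I$, hence $\bar d=0$ and the polytope is bounded. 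You need to state and use this interiority; the remark about quotienting by the finite kernel of the $\Gamma$-action addresses only spanning and not this point. (The reduction to smooth components, the finiteness of the candidate sets $I$, and the deferral of the non-abelian case to \cite{boundedness-paper} are all consistent with the paper.)
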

We only prove this lemma in the abelian case here. The general case is the main
theorem in \cite{boundedness-paper}.

From now on assume that $G = (\mathbb C^*)^{\times n}, \Gamma = G \times
\mathbb C^*$, and $\epsilon$ is the second projection.
Thus a principal $\Gamma$-bundle is equivalent to an $(n+1)$-tuple of
line bundles as in Section~\ref{sec:brief}. Hence to bound the underlying
$\Gamma$-bundles, it suffices to bound the degree of each line bundle on each
$\mathscr C^\prime$.
Once we have done that, the boundedness of those line
bundles follows easily from the boundedness of the relative Picard functor for
smooth projective curves, combined with a standard splitting-node argument,
and a standard descent argument using the following covering trick.
\begin{lemm}
  \label{lem:covering-trick}
  Let $\pi: \Sigma^{\mathscr C} \subset \mathscr C \to S$ be a family of smooth
  twisted curves over a finite-type $\mathbb C$-scheme $S$.
  Then \'etale locally on $S$, there exists a family of connected smooth
  projective curves $\tilde{\mathscr C} \to S$, together with a faithfully flat
  $S$-morphism $\tilde{\mathscr C} \to \mathscr C$.
\end{lemm}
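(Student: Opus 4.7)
The plan is to reduce to the coarse moduli space and then construct an explicit Kummer/cyclic cover that trivializes the stacky structure. First I would étale-localize on $S$ so that several structures become trivialized: namely, the coarse moduli $q: \mathscr C \to C$ is a smooth projective family of curves over $S$, the orbifold locus is a disjoint union of sections $\sigma_1 ,\ldots, \sigma_k: S \to C$, each with constant twisted order $r_i$, and the markings $\Sigma^{\mathscr C}$ are also disjoint sections. This uses standard deformation theory of twisted curves (c.f.\ \cite{olsson2007}, \cite{abramovich2002compactifying}), together with the fact that the orders of the isotropy groups are locally constant on a smooth family. After this reduction, the results of Olsson identify $\mathscr C$ with the root stack $\sqrt[(r_1,\ldots, r_k)]{(\sigma_1,\ldots, \sigma_k)/C}$ over $S$.

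Let $r = \mathrm{lcm}(r_1,\ldots, r_k)$, and set $n_i = r/r_i$. The idea is to build a cyclic cover of $C$ of degree $r$, ramified along each $\sigma_i$ with index $r_i$, so that the induced map from the normalization to $\mathscr C$ is étale. Concretely, I would, after possibly further étale shrinking of $S$, pick a line bundle $N$ on $C$ of sufficiently large degree and a section $\tau \in H^0(C, N^{\otimes r})$ whose divisor of zeros is equal to $\sum n_i \sigma_i + r\cdot D$, where $D$ is a reduced divisor disjoint from the $\sigma_i$ and from the markings, and étale over $S$. Existence of such $(N,\tau)$ follows from standard Bertini-type arguments applied to a sufficiently ample line bundle on $C$ over $S$; connectedness of fibers can be ensured by choosing $\tau$ sufficiently general, since monodromy of generic $r$-fold covers is the full cyclic group.

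The spectrum $\tilde C = \operatorname{Spec}_C \bigl(\bigoplus_{j=0}^{r-1} N^{\otimes(-j)}\bigr)$ with multiplication twisted by $\tau$ is then a smooth projective $S$-family of curves: smoothness is a local calculation at the branch divisor, where the ramification indices match the orders $r_i$ (and are $1$ elsewhere, hence unramified and smooth there). By the universal property of root stacks, the composition $\tilde C \to C$ factors through an $S$-morphism $\tilde C \to \mathscr C$, and precisely because the ramification index at each $\sigma_i$ equals $r_i$, this induced morphism is étale at each orbifold point; elsewhere it is automatically representable and étale since $\mathscr C \to C$ is an isomorphism away from the orbifold locus. Étale plus surjective gives faithfully flat, as required. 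Finally, if connectedness of $\tilde C_s$ fails on an open set of $S$, I would pass to further étale covers of $S$ to split into connected components.

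The main obstacle I anticipate is the existence of the pair $(N,\tau)$ in families with the prescribed vanishing orders, together with the connectedness of $\tilde C_s$. Both issues are about controlling a global section of a line bundle on the family $C\to S$ in the presence of prescribed zeros along sections $\sigma_i$; this is where we may need to pass to a further étale cover of $S$. All the rest is a formal consequence of the root stack description of $\mathscr C$ and of Olsson's local structure theorem for twisted curves.
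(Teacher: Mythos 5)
Your overall strategy is the same as the paper's: pass to the coarse moduli $C$, build a degree-$r$ cyclic cover of $C$ branched so that it lifts to the twisted curve, with the $r$-th root of the branch line bundle supplied after an \'etale base change on $S$. However, there is a concrete error in the step where you claim smoothness of $\tilde C = \operatorname{Spec}_C\bigl(\bigoplus_{j=0}^{r-1} N^{\otimes(-j)}\bigr)$. If $\tau$ vanishes to order $n_i = r/r_i$ along $\sigma_i$, the local equation of $\tilde C$ there is $t^r = x^{n_i}u$ with $u$ a unit, which is singular whenever $n_i>1$ (it is a union of $n_i$ smooth branches meeting at one point); along your auxiliary divisor $D$, where $\tau$ vanishes to order $r$, it is $r$ branches crossing. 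So the scheme you write down is not a family of smooth curves, and the object you actually want is its normalization — which you mention once in passing and then contradict. Taking the normalization is not free in a family: you would need to argue that it stays flat and proper over $S$ with smooth connected fibers, and your connectedness argument via ``generic monodromy'' is not available for this specific cover (what saves you is that $\gcd(r/r_1,\ldots,r/r_k)=1$ because $r=\operatorname{lcm}(r_i)$, but that needs to be said).

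The paper avoids all of this by making the branch divisor \emph{reduced}: it chooses $H$ with $D+H$ \'etale over $S$ and $\mathcal O(D+H)\cong \mathscr L^{\otimes r}$, and takes the cyclic cover totally ramified over $D+H$. Locally this is $t^r = x\cdot u$, which is smooth, so no normalization is needed. The price is that over an orbifold point of order $r_i$ the ramification index is $r$ rather than $r_i$, so the lift $\tilde{\mathscr C}\to\mathscr C$ (which exists by \cite[Theorem~4.2.1]{abramovich2008gromov}, since $r_i\mid r$) is only faithfully flat, not \'etale — but faithfully flat is all the lemma asserts. If you want to keep your \'etale-cover version, you must either carry out the relative normalization carefully or replace the cyclic cover by one constructed directly with the prescribed local ramification; as written, the proof has a gap at exactly the point where you assert smoothness.
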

\begin{proof}
  Let $r$ be the least
  common multiple of the orders of automorphism groups of points in $\Sigma$.
  Let $\rho: \mathscr C \to \underline{ \mathscr C}$ be the coarse moduli and
  set $D  = \rho(\Sigma^{\mathscr C})$.
  Then possibly after base change to an \'etale cover of $S$,
  there exists a relative effective Cartier divisor $H \subset
  \underline{\mathscr C}$ such that $D+H$ is \'etale over $S$ and $\mathcal
  O_{\underline{\mathscr C}}(D + H) \cong \mathscr L^{\otimes r}$ for some
  line bundle $\mathscr L$ on $\underline{\mathscr C}$. Using that we
  construct the
  cyclic cover
  $\tilde{\mathscr C} \to \underline{\mathscr C}$, which is
  totally ramified
  over $D + H$ with ramification index equal to $r - 1$. Hence it lifts to
  a map $\tilde{\mathscr C} \to \mathscr C$ by \cite[Theorem~4.2.1]{abramovich2008gromov}.
  This finishes the construction.
\end{proof}

Now we come to bounding the degrees of the line bundles.
We may assume $\vartheta$ is the pullback of $\theta$, thanks to
Lemma~\ref{lem:dependence-on-C-and-vartheta}.
We choose $\theta_i \in \widehat{G}$, $i=1 ,\ldots, n$, such that
\begin{itemize}
\item $V^{\mathrm{s}}(\theta) \subset V^{\mathrm{ss}}(\theta_i)$,
\item $\theta \in \sum_{i} \mathbb R_{> 0} \theta_i$,
\item $\theta_1 ,\ldots, \theta_n$ span $\widehat{G} \otimes \mathbb R$.
\end{itemize}
This is possible because any $\theta_i$ such that the ray $\mathbb R_{\geq
  0}\theta_i$ is sufficiently close to $\mathbb R_{\geq 0}\theta$ satisfies the
first requirement.

Let $\vartheta_i$ be the pullback of $\theta_i$.
Let $\mathscr P_s$, $\mathscr C_s$ and $\mathscr
C^\prime$ be as in Lemma~\ref{lem:boundedness-key}.
\begin{lemm}
  \label{lem:bounding-theta_i}
  There exists $B \in \mathbb R$, independent of $s$,
  $\mathscr P_s$ and $\mathscr C^\prime$, such that
  \[
    \deg_{\mathscr P_s, \mathscr C^\prime}(\vartheta_i) >  B, \quad i = 1
    ,\ldots, n.
  \]
\end{lemm}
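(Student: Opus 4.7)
The plan is to exploit the inclusion $V^{\mathrm{s}}(\theta)\subset V^{\mathrm{ss}}(\theta_i)$ to produce, for each $i$, a finite list of bi-homogeneous $\Gamma$-semi-invariants whose joint non-vanishing locus equals $V^{\mathrm{s}}(\theta)$. Pulling back such a semi-invariant via $\sigma$ on a component where it is non-zero at the generic point will give a non-zero global section of an associated line bundle on $\mathscr C^\prime$; the resulting degree inequality, together with the fixed value $\deg_{\mathscr P_s,\mathscr C^\prime}(\epsilon)=d_2$, should yield a uniform lower bound on $\deg_{\mathscr P_s,\mathscr C^\prime}(\vartheta_i)$.

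Concretely, I would set $R^{(i)}_{k\theta_i}:=H^0(V,L_{k\theta_i})^G$. The argument of Lemma~\ref{lem:second-decomposition} applied with $\theta_i,\vartheta_i$ in place of $\theta,\vartheta$ yields a bigraded decomposition $R^{(i)}_{k\theta_i}=\bigoplus_c R^{(i)}_{k\vartheta_i,c\epsilon}$, where $R^{(i)}_{k\vartheta_i,c\epsilon}=H^0(V,L_{k\vartheta_i+c\epsilon})^\Gamma$. By definition $V^{\mathrm{ss}}(\theta_i)$ is covered by the principal opens $\{f\ne 0\}$ for bi-homogeneous $f\in R^{(i)}_{k\vartheta_i,c\epsilon}$ with $k>0$. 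Since $V^{\mathrm{s}}(\theta)\subset V^{\mathrm{ss}}(\theta_i)$ is quasi-compact (open in the noetherian affine scheme $V$), finitely many such opens cover it; I would fix, once and for all, a finite set
\[
S_i=\{f^{(j)}_i\}_{j=1}^{m_i},\qquad f^{(j)}_i\in R^{(i)}_{k_{ij}\vartheta_i,c_{ij}\epsilon},\quad k_{ij}>0,
\]
whose joint non-vanishing locus equals $V^{\mathrm{s}}(\theta)$.

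Given $(\mathscr P_s,\sigma,\mathscr C^\prime)$ as in the hypothesis, the generic point $\eta$ of $\mathscr C^\prime$ is sent by $\sigma$ into $\mathscr P_s\times_\Gamma V^{\mathrm{s}}(\theta)$, so some $f^{(j)}_i\in S_i$ is non-vanishing at $\sigma(\eta)$. Its pullback $\sigma^* f^{(j)}_i$ is then a non-zero global section of the line bundle $(\mathscr P_s\times_\Gamma L_{k_{ij}\vartheta_i+c_{ij}\epsilon})|_{\mathscr C^\prime}$ on the irreducible proper curve $\mathscr C^\prime$, forcing
\[
k_{ij}\deg_{\mathscr P_s,\mathscr C^\prime}(\vartheta_i)+c_{ij}\deg_{\mathscr P_s,\mathscr C^\prime}(\epsilon)\ge 0.
\]
Substituting $\deg_{\mathscr P_s,\mathscr C^\prime}(\epsilon)=d_2$ and taking the worst case over the finite set $S_i$ yields
\[
\deg_{\mathscr P_s,\mathscr C^\prime}(\vartheta_i)\ \ge\ -\max_{1\le j\le m_i}\frac{c_{ij}\,d_2}{k_{ij}},
\]
uniformly in $s,\mathscr P_s,\mathscr C^\prime$; taking the minimum over $i=1,\ldots,n$ furnishes the desired $B$.

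The only substantive step is the noetherian reduction to the finite set $S_i$ in the first paragraph; the remainder is routine bookkeeping. Note that only the first property of the $\theta_i$'s in the list preceding the lemma is used here (together with the fact that $\deg_{\mathscr P_s,\mathscr C^\prime}(\epsilon)=d_2$ is fixed); the spanning and positivity properties of $\{\theta_i\}$ are presumably what will convert these per-$\vartheta_i$ lower bounds into two-sided bounds on $\deg_{\mathscr P_s,\mathscr C^\prime}(\chi)$ for every $\chi\in\widehat\Gamma$, and hence on each of the $n+1$ line bundles underlying $\mathscr P_s$ in the abelian case.
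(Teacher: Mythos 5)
Your proposal is correct and follows essentially the same route as the paper: decompose $\Gamma(V,L_{k\theta_i})^G$ into $\Gamma$-bi-homogeneous pieces, use $V^{\mathrm{s}}(\theta)\subset V^{\mathrm{ss}}(\theta_i)$ to find a bi-homogeneous $f$ nonvanishing at the generic point of $\mathscr C^\prime$, deduce $k\deg_{\mathscr P_s,\mathscr C^\prime}(\vartheta_i)+c\,d_2\ge 0$, and make the bound uniform by restricting $f$ to a finite set (the paper invokes finite generation of the relevant ideal, you invoke quasi-compactness of $V^{\mathrm{s}}(\theta)$ — the same noetherian point). The only nitpick is that your finite set $S_i$ need only have joint non-vanishing locus \emph{containing} $V^{\mathrm{s}}(\theta)$, not equal to it, but this does not affect the argument.
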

\begin{proof}
  As in Lemma~\ref{lem:second-decomposition}, for each $k\geq 0$,
  we have
  \[
    \textstyle
    \Gamma(V, L_{k\theta_i})^{G} = \bigoplus_{c\in \mathbb Z} \Gamma(V,
    L_{k\vartheta_i + c\epsilon})^{\Gamma}.
  \]
  Let $x\in V^{\mathrm{s}}(\theta) \subset
  V^{\mathrm{ss}}(\theta_i)$ be a point in the image of $u|_{\mathscr
    C^\prime}$, where $u: \mathscr C_s \to [V/\Gamma]$ is the map defined by
  $\mathscr P_s$ and $\sigma$.
  We can choose an
  $f\in \Gamma(V, L_{k\vartheta_i + c\epsilon})^{\Gamma}$, for some $k > 0$ and
  $c\in \mathbb Z$, such that $f(x) \neq 0$. Viewing $f$ as a $\Gamma$-equivariant
  map $f:V \to \mathbb C_{k\vartheta_i + c\epsilon}$, it induces a map
  \[
    \mathscr P_s\times_{\Gamma} V \longrightarrow \mathscr P_s\times_{\Gamma} \mathbb
    C_{k\vartheta_i + c\epsilon},
  \]
  under which $\sigma$ is mapped to a  section that is nonzero at the generic
  point of $\mathscr C^\prime$. Hence
  \[
    \deg(\mathscr P_s\times_{\Gamma} \mathbb
    C_{k\vartheta_i + c\epsilon} |_{\mathscr C^\prime}) \geq 0.
  \]
  Writing $\mathscr L_{\vartheta_i} = \mathscr P_{s}\times_{\Gamma} \mathbb
  C_{\vartheta_i}$ and $\mathscr L_{\epsilon} = \mathscr P_{s}\times_{\Gamma} \mathbb
  C_{\epsilon}$, we have
  \[
    \mathscr P_s\times_{\Gamma} \mathbb C_{k\vartheta_i + c\epsilon}  \cong
    \mathscr
    L_{\vartheta_i}^{\otimes k} \otimes \mathscr L_\epsilon^{ \otimes c}.
  \]
  Thus
  \[
    \deg(\mathscr L_{\vartheta_i}|_{\mathscr C^\prime})
    \geq
    -\frac{cd_2}{k}.
  \]
  It is easy to see that the $f$ above can be taken from a finite set, since
  the ideal of the unstable locus is finitely generated.
  Hence we get a uniform bound.
\end{proof}

Now we have established enough bounds for the proof.
\begin{proof}[Proof of Lemma~\ref{lem:boundedness-key} in the abelian case]
  Pick a common $B$ for Lemma~\ref{lem:bounding-epsilon-and-vartheta} and
  Lemma~\ref{lem:bounding-theta_i}.
  Define
  \begin{equation*}
    \Omega = \Bigg\{d \in \mathrm{Hom}(\widehat {\Gamma}, \mathbb R) \Bigg |~
    \begin{aligned}
      & d(\vartheta) = d_1, \\
      & d(\epsilon) = d_2, \\
      & d( - \vartheta_i) < B, i = 1 ,\ldots,  n~
    \end{aligned}
    \Bigg\}.
  \end{equation*}
  Since the origin of $\widehat\Gamma$ is in the interior of the convex hull of $\pm
  \epsilon, \vartheta, - \vartheta_1 ,\ldots, -\vartheta_n$, $\Omega$ is bounded.
  We have seen that any $\deg_{\xi, \mathscr C^\prime}$ of
  Lemma~\ref{lem:boundedness-key} lies in $\Omega$.
  Thus the degrees of each line bundle associated to $\mathscr P_s|_{\mathscr
    C^\prime}$ is uniformly bounded. This completes the proof.
\end{proof}

\subsection{Further reductions}
\label{sec:quick-reduction-general-case}
Assuming the boundedness,
the next goal is to prove Theorem~\ref{1} and Theorem~\ref{2} using the
valuative criterion.
We first make some quick reductions.
We claim that without loss of generality we can make the following assumptions:
\begin{enumerate}
\item
  $V$ is reduced.
\item
  the affine quotient $V\git_0G = \operatorname{Spec} R_{0}$ is connected.
\item
  $S\subset R_{\theta}$.
\item
  When $V\git_{\theta}G$ is projective and $S$ is full,
  we assume in addition that $R_{\bullet}$ is generated by $S$ as a $\mathbb C$-algebra.
\end{enumerate}
Note that
when $V\git_{\theta}G$ is projective, (1) and (2) imply that $R_{0} = \mathbb C$.

We now show that it indeed suffices to prove the two theorems with these additional assumptions.
When proving the theorems, we consider the valuative criterion situation. Since the families
of curves appearing in the valuative criterions are reduced and connected DM
stacks, (1) and (2) can be assumed.
By raising each $f\in S$ to some power, which does not change the
$\Omega$-stability, we may assume that $S\subset
R_{m\theta}$ for some $m>0$. Then replacing $\vartheta$ by $m\vartheta$ and $A$ by
$mA$, we get (3).

Now suppose $V\git_{\theta} G$ is projective and $S$ is full.
Since we have assumed $V$ is reduced, by
Corollary~\ref{cor:projective-case-independent-of-S}, we may re-arrange any full
$S$ without changing the $\Omega$-stability.
There exists $m>0$ such that $\bigoplus_{m=0}^\infty R_{m\theta}$
is generated by $R_{m\theta}$ as an ($R_{0}\cong \mathbb C$)-algebra (c.f.\
\cite[Exercise~7.4.G]{vakil2025rising}).
Replacing
$\vartheta$ by $m\vartheta$, and $A$ by $mA$, we may assume
that $m=1$. Take $S$ to be a homogeneous basis for $R_{\theta}$ and we get (4).

\subsection{Comparing LG-quasimaps to $X$ and LG-quasimaps to $\mathbb P^{N-1}$}
\label{sec:comparing-LG-to-X-and-to-P-N-1}

Having made the additional assumptions in the previous section,
now the $\Omega$-stability for LG-quasimaps to $X$ is defined by the triple
$\Omega=(S,A,\vartheta)$, where
\[
  S = \{f_1 ,\ldots, f_N\}, \quad f_i \in R_{\vartheta, c_i\epsilon}, c_i\in \mathbb Z,\, i=1 ,\ldots, N.
\]
Recall the $R$-charged package for $\mathbb P^{N-1}$ from
Section~\ref{sec:projective-space-example}, with the $c_i$'s the same as above.
We will compare $\Omega$-stable LG-quasimaps to $X$ and $\Omega^\prime$-stable
LG-quasimaps to $\mathbb P^{N-1}$, for
$\Omega^\prime=(S^\prime,A^\prime,\vartheta^\prime)$, where
$S'=(x_1,\cdots,x_N)$ consisting of the standard coordinates
of $\CC^N$, $A'=A$ and $\vartheta^\prime = \mathrm{pr}_1$ is the projection onto
the first factor.

To compare the two $R$-charged packages for $X$ and $\mathbb P^{N-1}$
respectively, consider the map
\begin{equation}
  \label{map:f}
  f = (f_1 ,\ldots, f_N): V \longrightarrow \mathbb C^{N},
\end{equation}
and the group homomorphism
\[
  \phi = (\vartheta, \epsilon): \Gamma \longrightarrow (\mathbb C^*)^{\times 2}.
\]
Then the two $R$-charged package are compatible in that $f$ is equivariant with
respect to $\phi$, and $\phi^{*}\mathrm{pr}_2 = \epsilon$.
Their stability conditions are also compatible in the sense that $f\sta S'=S$ and $\phi^*
\vartheta^\prime = \vartheta$.

Let
\[
  \mathrm{LGQ}^{\Omega1}_{g,k}(X, d) \subset \mathrm{LGQ}^{\mathrm{pre}}_{g,k}(X, d)
\]
be the open substack consisting of LG-quasimaps satisfying $\Omega$-1. By the
compatibility above,
$(f,\phi)$ induces a morphism of stacks
\begin{equation}
  \label{map:f-star}
  \Phi : \mathrm{LGQ}_{g,k}^{\Omega1}(X, d) \longrightarrow
  \mathrm{LGQ}^{\mathrm{pre}}_{g,k}(\mathbb P^{N-1}, d).
\end{equation}
For any scheme $T$, it sends
\[
  (\mathscr C, \Sigma^{\mathscr C}, u, \kappa) \in
  \mathrm{LGQ}_{g,k}^{\Omega1}(X, d)(T)
\]
to
\[
  (\underline{\mathscr C}, \Sigma^{\underline{\mathscr C}},
  \mathscr L, \varphi_1 ,\ldots, \varphi_N) \in
  \mathrm{LGQ}_{g,k}^{\mathrm{pre}}(\mathbb P^{N-1},d)(T),
\]
where $\rho: (\mathscr C, \Sigma^{\mathscr C}) \to
(\underline{\mathscr C}, \Sigma^{\underline{\mathscr C}})$ is the map to the coarse
moduli space, $\mathscr L = \rho_*(u^*L_{\vartheta})$, and $\varphi_i =
\rho_*(u^*f_i)$, $i=1 ,\ldots, N$. Here we are using the description of the objects of
$\mathrm{LGQ}_{g,k}^{\mathrm{pre}}(\mathbb P^{N-1},d )$ in terms of line bundles
and sections as in Section~\ref{sec:projective-space-example}.
Note that
$\omega_{\mathscr C/T}^{\log}$ is canonically isomorphic to
$\rho^*(\omega_{\underline{\mathscr C}/T}^{\log})$. Hence indeed
$\varphi_i =\rho_*(u^*f_i)\in H^0(\underline{\mathscr C}, \mathscr L \otimes
(\omega^{\log}_{\underline{\mathscr C}/T})^{\otimes c_i})$.
Also, we need to justify that $\deg(\mathscr L) = \deg(u^*L_{\vartheta})$. Indeed,
at each orbifold point, which must be a node or marking, at least one of $u^*f_i \in H^0(\mathscr C,
u^*L_{\vartheta})$ is nonzero. Hence $u^*L_{\vartheta}$ does not have any monodromy at
the orbifold points of $\mathscr C$. Hence we have $\rho^*\mathscr L\cong
u^*L_{\vartheta}$ and $\deg(\mathscr L) = \deg(u^*L_{\vartheta})$.
\begin{lemm}
  \label{lem:stable-locus-coincides}
  We have $\Phi^{-1}(\mathrm{LGQ}^{\Omega}_{g,k}(\mathbb P^{N-1},d)) = \mathrm{LGQ}^{\Omega}_{g,k}(X,d)$.
\end{lemm}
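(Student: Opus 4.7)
The plan is to verify the set equality by matching the three clauses of Definition~\ref{stability} for $\xi$ with the corresponding stability clauses for $\Phi(\xi)$ recorded in Section~\ref{sec:projective-space-example}. The preamble to the lemma already equips us with the key geometric fact: under the coarse-moduli map $\rho:\mathscr C\to\underline{\mathscr C}$ we have $\rho^*\mathscr L\cong u^*L_\vartheta$ (the existence of a nonvanishing $u^*f_i$ at each orbifold point kills the monodromy) and $\rho^*\omega_{\underline{\mathscr C}/T}^{\log}\cong \omega_{\mathscr C/T}^{\log}$. Thus degrees on corresponding irreducible components are related by the (positive) degree of $\rho$, and set-theoretic loci pull back faithfully.

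First I would match the base-point conditions. The subscheme $u^{-1}([V(S)/\Gamma])$ is by definition the common zero locus of $u^*f_1,\ldots,u^*f_N$, and its image under $\rho$ is the common zero locus of $\varphi_1,\ldots,\varphi_N$, i.e.\ the base locus of the $\mathbb P^{N-1}$-valued LG-quasimap $\Phi(\xi)$. Since $\rho$ is a homeomorphism identifying special points with special points, $\Omega$-1 for $\xi$ holds if and only if the base locus of $\Phi(\xi)$ is discrete and disjoint from the special points of $(\underline{\mathscr C},\Sigma^{\underline{\mathscr C}})$, i.e.\ if and only if $\Phi(\xi)$ is a prestable LG-quasimap to $\mathbb P^{N-1}$.

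Next I would match $\Omega$-2. By the choice of $S$ made at the start of Section~\ref{sec:comparing-LG-to-X-and-to-P-N-1}, each $f_i\in R_{\vartheta,c_i\epsilon}$ satisfies $\mathrm{wt}_\theta(f_i)=1$ and $\mathrm{slope}(f_i)=c_i$, so $\Omega$-2 for $\xi$ reads $\min_i\{\mathrm{ord}_x(u^*f_i)+c_i\}\leq A$ at each non-special closed $x\in\mathscr C$. Because non-special points lie in the scheme locus of $\mathscr C$ and $\rho$ is an isomorphism there, $\mathrm{ord}_x(u^*f_i)=\mathrm{ord}_{\rho(x)}(\varphi_i)$, so this is verbatim the $\Omega$-2 condition imposed on $\Phi(\xi)$. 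For $\Omega$-3, the isomorphism above yields
\[
\rho^*\bigl(\mathscr L\otimes(\omega_{\underline{\mathscr C}}^{\log})^{\otimes A}\bigr)\cong u^*L_\vartheta\otimes(\omega_{\mathscr C}^{\log})^{\otimes A},
\]
and since $\rho$ is surjective with positive degree on each irreducible component and induces a bijection of components, positivity is preserved in both directions. Combining the three matchings gives the claimed equality.

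The only non-formal point is the descent step ensuring $\rho^*\mathscr L\cong u^*L_\vartheta$, which relies essentially on $\Omega$-1 (for the forward inclusion) and on prestability of $\Phi(\xi)$ together with the explicit formula $\mathscr L=\rho_*(u^*L_\vartheta)$ (for the backward inclusion); this is exactly the observation already made in the paragraph introducing $\Phi$. Once the descent is in place, the remaining equivalences are purely combinatorial and the lemma follows.
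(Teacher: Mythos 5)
Your proposal is correct and is exactly the clause-by-clause verification that the paper leaves implicit (its proof of this lemma is simply ``This is straightforward''): the identifications $\rho^*\mathscr L\cong u^*L_\vartheta$ and $\rho^*\omega^{\log}_{\underline{\mathscr C}}\cong\omega^{\log}_{\mathscr C}$ from the paragraph defining $\Phi$, together with $\mathrm{wt}_\theta(f_i)=1$ and $\mathrm{slope}(f_i)=c_i$ from the reductions of Section~\ref{sec:quick-reduction-general-case}, make $\Omega$-1, $\Omega$-2 and $\Omega$-3 for $\xi$ match the prestability, $\Omega$-2 and $\Omega$-3 conditions for $\Phi(\xi)$ verbatim. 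No gaps.
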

\begin{proof}
  This is straightforward.
\end{proof}

\subsection{The local valuative criterion}
\label{sec:valuative-criterion}
Combining  Lemma~\ref{lem:properness-projective-space-case} and
Lemma~\ref{lem:stable-locus-coincides}, to prove that
$\mathrm{LGQ}^{\Omega}_{g,k}(X,d)$ is separated (resp.~proper),
it suffices to prove that $\Phi$ is separated (resp.~proper).

We use the valuative criterion for $\Phi$. This can be checked locally on
$\underline{\mathscr C}$, which we now explain.
Let $T$ be the spectrum of a
discrete valuation ring with residue field $\CC$ and fractional field $K$.
We denote by $\eta$ and $\zeta$ the generic and closed points of $T$,
respectively. Given
\[
  \underline{\xi} = (\underline{\mathscr C}, \Sigma^{\underline{\mathscr C}},
  \mathscr L, \varphi_1 ,\ldots, \varphi_N) \in
  \mathrm{LGQ}_{g,k}^{\mathrm{pre}}(\mathbb P^{N-1},d)(T),
\]
\[
  \xi_{\eta} = (\mathscr C_{\eta}, \Sigma^{\mathscr C_\eta}, u_{\eta}, \kappa_{\eta}) \in
  \mathrm{LGQ}_{g,k}^{\Omega1}( X,d)(\eta),
\]
and an isomorphism
\[
  \alpha_{\eta}: \Phi(\xi_{\eta}) \overset{\cong}{\longrightarrow}
  \underline{\xi}_{\eta} := \underline{\xi}|_{\eta},
\]
possibly after finite base change,
we need to investigate the existence and uniqueness
of pairs $(\xi,\alpha)$, where
$\xi\in \mathrm{LGQ}_{g,k}^{\Omega1}(
X,d)(T)$ is an extension of $\xi_{\eta}$, and $\alpha: \Phi(\xi) \to
\underline{\xi}$ is an isomorphism extending $\alpha_{\eta}$.
We refer to  this as the valuative criterion for $(\underline{\xi}, \xi_{\eta},
\alpha_{\eta})$.

We now work locally on $\underline{\mathscr C}$. Obviously, one can extend the definition of
$\Phi$ to nonproper family of curves. The definition is verbatim and we omit the details.
Using that, the valuative criterion situation for $\Phi$ can be stated for
nonproper family of curves without any change.
For any Zariski open $\underline{\mathscr U} \subset \underline{\mathscr C}$, we
base change various objects to $\underline{\mathscr U}$, denoted as
\[
  \mathscr U_{\eta} = \mathscr C_{\eta}|_{\underline{\mathscr U}}:= \mathscr C_{\eta}\times_{\underline{\mathscr C}}
  \underline{\mathscr U},
  \quad \underline{\xi}|_{\underline{\mathscr U}}:= (\underline{\mathscr U},
  \underline{\mathscr U} \cap \Sigma^{\underline{\sC}}, \mathscr
  L|_{\underline{\mathscr U}}, \varphi|_{\underline{\mathscr U}}),
  \quad {\xi}_{\eta}|_{\underline{\mathscr U}} := \xi_{\eta}|_{\mathscr U_{\eta}}
\]
and so on.

\begin{defi}[The local valuative criterion]
  We say that $\Phi$ satisfies the local valuative criterion if for any
  $(\underline{\xi}, \xi_{\eta}, \alpha_{\eta})$ as above, there is a
  basis $\{\underline{\mathscr U}_\lambda\}_{\lambda\in \Lambda}$ for the Zariski topology on
  $\underline{\mathscr C}$, such that for each $\lambda \in \Lambda$,
  $(\underline{\xi}|_{\underline{\mathscr U}_\lambda}, \xi_{\eta}|_{\underline{\mathscr U}_\lambda},
  \alpha_{\eta}|_{\underline{\mathscr U}_\lambda})$ satisfies the valuative criterion.
\end{defi}

\begin{lemm}
  If $\Phi$ satisfies the local valuative criterion for separatedness (resp.
  separatedness and properness) , then $\Phi$ is separated (resp. proper).
\end{lemm}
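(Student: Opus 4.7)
The strategy is to reduce the global valuative criterion for $\Phi$ to its local version, by showing that local extensions and local isomorphisms over a basis of Zariski opens of $\underline{\sC}$ can be glued into global ones. I treat separatedness first, and then invoke it when handling properness.

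For separatedness, suppose we are given two pairs $(\xi,\alpha)$ and $(\xi',\alpha')$ over $T$ extending $(\xi_\eta,\alpha_\eta)$ in the sense of Section~\ref{sec:valuative-criterion}. The goal is to construct a unique isomorphism $\xi\to\xi'$ compatible with $\alpha$ and $\alpha'$ and restricting to the identity over $\eta$. Choose a basis $\{\underline{\sU}_\lambda\}$ as in the hypothesis; the local valuative criterion applied on each $\underline{\sU}_\lambda$ produces a unique isomorphism $\psi_\lambda$ between the corresponding restrictions. On any overlap $\underline{\sU}_\lambda\cap\underline{\sU}_\mu$, both $\psi_\lambda$ and $\psi_\mu$ solve the valuative problem for the common restricted data, so they must coincide by the uniqueness assertion; in particular the cocycle condition is automatic. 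Since isomorphisms of this data (a twisted curve together with a representable morphism to $[V/\Gamma]$ and a trivialization $\kappa$) form a Zariski sheaf, the $\psi_\lambda$ glue to a global isomorphism $\xi\to\xi'$, whose global uniqueness follows from the local uniqueness of each $\psi_\lambda$.

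For properness, having established separatedness, I turn to existence. Given $(\underline{\xi},\xi_\eta,\alpha_\eta)$, cover $\underline{\sC}$ by basis opens $\underline{\sU}_\lambda$ and apply the local valuative criterion to obtain local extensions $(\xi_\lambda,\alpha_\lambda)$ over each $\underline{\sU}_\lambda$. On each overlap the restrictions $\xi_\lambda$ and $\xi_\mu$ are two extensions of a common generic fiber with compatible identifications to $\underline{\xi}$, so by the separatedness just established they are canonically identified by a unique isomorphism $\psi_{\lambda\mu}$. Uniqueness forces $\psi_{\mu\nu}\circ\psi_{\lambda\mu}=\psi_{\lambda\nu}$ on triple overlaps, so the local data may be glued: the underlying twisted curves glue to a twisted curve $\sC\to T$ extending $\sC_\eta$, with stacky structure at the finitely many special points uniquely prescribed by whichever $\xi_\lambda$ covers them; the representable morphism $u: \sC\to[V/\Gamma]$ glues by Zariski descent of morphisms into algebraic stacks; and the isomorphism $\kappa: u^*L_\varpi \cong \omega_\sC^{\log}$ glues as it is a condition on line bundles with prescribed overlap identifications. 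The resulting $\xi$ lies in $\mathrm{LGQ}^{\Omega 1}_{g,k}(X,d)(T)$ because $\Omega$-$1$ is Zariski-local on $\underline{\sC}$ and holds on each piece, and the $\alpha_\lambda$ glue to the desired global $\alpha$ by the same sheaf property.

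The only real content is the gluing step, which is standard Zariski descent for morphisms into algebraic stacks combined with the uniqueness supplied by the local valuative criterion: it is precisely that uniqueness which forces the cocycle identity and permits gluing. The only mild subtlety is the verification that the local prescriptions of the twisted-curve structure agree at special points lying in two covering opens, but this is immediate from the uniqueness of the $\psi_{\lambda\mu}$.
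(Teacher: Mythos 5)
Your proof is correct and is precisely the ``standard gluing argument'' that the paper invokes and omits: local uniqueness forces agreement on overlaps (strictly, one should restrict further to basis opens inside each overlap before invoking uniqueness, then use the sheaf property), and Zariski descent for morphisms into algebraic stacks does the gluing. The only detail worth adding is that for the existence part one first passes to a finite subcover (by quasi-compactness of $\underline{\sC}$) and a common finite base change of $T$ serving all the finitely many local extensions at once.
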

\begin{proof} This follows from the standard gluing argument, and will be omitted.
\end{proof}

\subsection{Checking the local valuative criterion}
Let $T,\underline{\xi}, \xi_{\eta}, \alpha_{\eta}$ be as before.
Let $\underline{\mathscr U}\subset \underline{\mathscr C}$ be any Zariski open
such that $\omega_{\underline{\mathscr C}/T}^{\log}|_{\underline{\mathscr U}}$ is
trivial. We will prove that $( \underline{\xi}|_{\underline{\mathscr U}},
\xi_{\eta}|_{\underline{\mathscr U}}, \alpha_{\eta}|_{\underline{\mathscr U}})$
satisfies the valuative criterion. Once this is done, since all such $\underline{\mathscr U}$
form a basis for the Zariski topology on $\underline{\mathscr C}$, this will verify the local
valuative criterion.

From now on we add the assumption that
$\underline{\mathscr U}$ contains no base points of $\underline{\xi}$. The
general case then follows from a standard argument using the algebraic Hartogs'
theorem (c.f.\ \cite[Lemma~4.3.2]{ciocan2014stable}).

The idea of the proof is to locally compare maps into $X$ and
LG-quasimaps to $X$.
We write $V_{f} = V\setminus f^{-1}(0) \subset V^{\mathrm{ss}}(\theta)$ and $X_f = [V_f/G]\subset X$.
Let $y:\mathscr Y\to \underline{\mathscr U}$ be any separated DM stack over
$\underline{\mathscr U}$ and let $\underline{\mathscr Y}$ be its coarse moduli.
Invoking the definition of LG-quasimaps in
Definition~\ref{def:prestable-LG-quasimap}, we consider the following categories
\begin{itemize}
\item
  $\mathrm{LG}^{\mathrm{bf}}(\mathscr Y, X_f)$ is the category of pairs $(u_{y}, \kappa_y)$,
  where $u_{y}: \mathscr Y \to [V_f/\Gamma]$ is
  representable and $\kappa_y:
  u_y^*L_{\epsilon} \to y^* (\omega_{\underline{\mathscr
      C}}^{\log}|_{\underline{\mathscr U}})$
  is an isomorphism of line bundles.
\item
  $\mathrm{LG}^{\mathrm{bf}}(\underline{\mathscr Y}, \mathbb P^{N-1})$ is the category of
  pairs $(u_{y}, \kappa_y)$, where $u_{y}: \underline{\mathscr Y} \to
  [\mathbb C^{N}\setminus \{0\}/(\mathbb C^*)^{\times 2}]$ and $\kappa_y:
  u_y^*L_{\mathrm{pr}_2} \to y^* (\omega_{\underline{\mathscr
      C}}^{\log}|_{\underline{\mathscr U}})$
  is an isomorphism of line bundles.
\item
  $\mathrm{Hom}^{\mathrm{rep}}(\mathscr Y, X_f)$ is the category of
  representable morphisms $\mathscr Y\to X_f$.
\item
  $\mathrm{Hom}(\underline{\mathscr Y}, \mathbb P^{N-1})$ is the set of
  morphisms $\underline{\mathscr Y}\to \mathbb P^{N-1}$.
\end{itemize}
Here the upper script ``bf'' means ``base point free''.
Note that \eqref{map:f} is equivariant with respect to $\theta: G\to \mathbb
C^*$. Hence it descends to
\[
  F : X_{f} \longrightarrow \mathbb P^{N-1}.
\]

\begin{lemm} \label{lem:map-vs-LG}
  Suppose $\omega_{\underline{\mathscr C}}^{\log}|_{\underline{\mathscr
      U}}\cong \sO_{{\underline{\mathscr U}}}$, there exists a commutative
  diagram
  \begin{equation}
    \begin{tikzcd}
      \mathrm{LG}^{\mathrm{bf}}(\mathscr Y, X_f) \arrow[r] \arrow[d] &
      \mathrm{LG}^{\mathrm{bf}}(\underline{\mathscr Y}, \mathbb P^{N-1}) \arrow[d] \\
      \mathrm{Hom}^{\mathrm{rep}}(\mathscr Y, X_f) \arrow[r] &
      \mathrm{Hom}(\underline{\mathscr Y}, \mathbb P^{N-1})
    \end{tikzcd},
  \end{equation}
  where the upper (resp.\ lower) horizontal arrow is defined by composition with $f: V \to \mathbb C^{N}$
  (resp.\ $F$) in the same way as $\Phi$ is defined
  (c.f.~Section~\ref{sec:comparing-LG-to-X-and-to-P-N-1}),
  and the two vertical arrows are equivalence of categories. Moreover, the diagram is natural in
  $\mathscr Y$ and compatible with base change for $T$.
\end{lemm}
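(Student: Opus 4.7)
The plan is to identify each vertical arrow as an instance of a general principle: for a short exact sequence $1 \to H \to K \xrightarrow{\chi} \mathbb C^* \to 1$ of algebraic groups and an affine $K$-scheme $W$, a morphism $\mathscr Y \to [W/H]$ is the same datum as a morphism $\mathscr Y \to [W/K]$ together with a trivialization of its $\chi$-weight line bundle. Apply this with $(H, K, \chi) = (G, \Gamma, \epsilon)$ for the left vertical arrow, and with $(H, K, \chi) = (\mathbb C^*, (\mathbb C^*)^{\times 2}, \mathrm{pr}_2)$ for the right vertical arrow. In both cases the $\chi$-weight trivialization is extracted by combining $\kappa_y$ with the pull-back $y^*s$ of the fixed trivialization $s$ of $\omega^{\log}|_{\underline{\mathscr U}}$.

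Explicitly, starting from $(u_y, \kappa_y) \in \mathrm{LG}^{\mathrm{bf}}(\mathscr Y, X_f)$ with underlying principal $\Gamma$-bundle $\mathscr P \to \mathscr Y$ and $\Gamma$-equivariant map $\sigma: \mathscr P \to V_f$, the composite $\kappa_y^{-1} \circ y^*s$ is a nowhere-vanishing section $\tau$ of $u_y^*L_\epsilon = \mathscr P \times_\Gamma \mathbb C_\epsilon$. Since $G = \ker(\epsilon)$, the locus $\{\tau = 1\} \subset \mathscr P$ is a principal $G$-subbundle $\mathscr Q$, and $\sigma|_{\mathscr Q}: \mathscr Q \to V_f$ is $G$-equivariant, producing a representable morphism $\mathscr Y \to [V_f/G] = X_f$. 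The inverse takes a representable morphism $\mathscr Y \to X_f$, encoded by a $G$-bundle $\mathscr Q$ with $G$-equivariant section, induces the $\Gamma$-bundle $\mathscr P = \mathscr Q \times_G \Gamma$ with its canonical trivialization $\tau$ of $\mathscr P \times_\Gamma \mathbb C_\epsilon$, and sets $\kappa_y$ by $\kappa_y(\tau) = y^*s$. These constructions are mutually inverse by inspection, giving the equivalence on the left. The right vertical arrow is defined by the same recipe; the domain $\underline{\mathscr Y}$ appears because after the $\mathrm{pr}_2$-reduction the target becomes the scheme $\mathbb P^{N-1}$, so all data automatically descends from $\mathscr Y$ to its coarse moduli.

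Commutativity of the square is then formal: both compositions produce, on the underlying bundles, the same principal $\mathbb C^*$-bundle on $\underline{\mathscr Y}$, either by first pushing $\mathscr P$ along $\phi$ to $\phi_*\mathscr P$ and then reducing via $\tau$ (legitimate because $\phi^*\mathrm{pr}_2 = \epsilon$), or by first reducing to $\mathscr Q$ via $\tau$ and then pushing along $\phi|_G: G \to \mathbb C^* \times \{1\}$. On the equivariant sections, $f \circ \sigma$ produces the same morphism as $F$ applied to the left-side reduction, since $F$ is by construction the descent of $f$ along the $\theta$-action. Naturality in $\mathscr Y$ and compatibility with $T$-base change are immediate, as every step uses only pull-back of principal bundles, equivariant morphisms, and the trivialization $s$, all of which behave well under pullback.

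The only delicate point I expect is the preservation of representability under the left vertical equivalence: one must verify that a representable $u_y: \mathscr Y \to [V_f/\Gamma]$ yields a representable morphism $\mathscr Y \to [V_f/G]$ and conversely. The key observation is that at a geometric point $y$ of $\mathscr Y$, the automorphism group $\mathrm{Aut}(y)$ acts trivially on $y^*\omega^{\log}|_{\underline{\mathscr U}}$ because the latter descends from $\underline{\mathscr Y}$; hence its action on $\tau$ is trivial, $\mathscr Q$ is $\mathrm{Aut}(y)$-stable, and the induced map $\mathrm{Aut}(y) \to \mathrm{Stab}_\Gamma$ factors through $\mathrm{Stab}_G = \mathrm{Stab}_\Gamma \cap G$. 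Injectivity on either side is therefore equivalent to injectivity on the other, handling the anticipated obstacle.
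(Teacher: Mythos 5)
Your proposal is correct and follows essentially the same route as the paper: the paper encodes your ``reduction of structure group along $\ker(\epsilon)$'' argument as the $2$-cartesian square $[V_f/G] = [V_f/\Gamma]\times_{B\mathbb C^*}\operatorname{Spec}(\mathbb C)$, identifying $\kappa_y$ composed with the fixed trivialization of $\omega^{\log}|_{\underline{\mathscr U}}$ with a lift of $\epsilon_*\circ u_y$ to $\operatorname{Spec}(\mathbb C)$, and handles representability by noting that $[V_f/G]\to[V_f/\Gamma]$ is representable, which is the global form of your stabilizer computation.
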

\begin{proof}
  From the exact sequence of groups
  \[
    1 \longrightarrow G \longrightarrow \Gamma \longrightarrow \mathbb C^*
    \longrightarrow 1,
  \]
  we obtain a 2-categorical fibered diagram
  \[
    \begin{tikzcd}
      {[V_f / G]} \ar[r] \ar[d] & \operatorname{Spec} (\mathbb C) \ar[d]\\
      {[V_f / \Gamma]} \ar[r, "\epsilon_*"] & B \mathbb C^*
    \end{tikzcd}.
  \]
  For $u_y: \mathscr Y \to [V_f/\Gamma]$, $\epsilon_*\circ u_y$ is the
  classifying morphism for the frame bundle of $u^*_y L_{\epsilon}$.
  We fix a trivialization for $\omega_{\underline{\mathscr C}}^{\log}|_{\underline{\mathscr U}}$ once and for all.
  Composed with this trivialization,
  the $\kappa_y$ in the definition of $\mathrm{LG}^{\mathrm{{b}f}}(\mathscr Y, X_f)$
  becomes a trivialization of $u^*_y L_{\epsilon}$, which is equivalent to a
  lifting of $\epsilon_* \circ u_y$ to $\operatorname{Spec} (\mathbb C)$.
  Thus, by the fibered diagram, $(u_y, \kappa_y)$ is equivalent to a
  morphism $\mathscr Y \to X = [V_f / G]$. Since $[V_f/G] \to [V_f /
  \Gamma]$ is representable, a morphisms $\mathscr Y \to [V_f / \Gamma]$ is
  representable if and
  only if the composed morphism $\mathscr Y \to [V_f/\Gamma]$ is.

  This establishes the equivalence of categories for the left vertical arrow.
  The case for the right vertical arrow is similar, and the rest is straightforward.
\end{proof}

Let
\[
  \underline{h}: \underline{\mathscr U} \longrightarrow \mathbb P^{N-1} \quad \text{and}
  \quad
  h_{\eta}: \mathscr U_{\eta} \longrightarrow X_f
\]
be obtained from $\underline{\xi}|_{\underline{\mathscr U}}$ and
$\xi_{\eta}|_{\underline{\mathscr U}}$ respectively by applying the vertical arrows in
Lemma~\ref{lem:map-vs-LG}. Then using the commutativity and naturality in
Lemma~\ref{lem:map-vs-LG},
$\alpha_{\eta}|_{\underline{\mathscr
    U}}$ reads that
$\underline{h}|_{\eta} = \underline{h_\eta}$, where
$\underline{h_\eta}:\underline{\mathscr U}|_{\eta} \to \mathbb P^{N-1}$ is the
map induced by the composition of $\mathscr U_{\eta}\overset{h_{\eta}}{\to} X_f \overset{F}{\to} \mathbb P^{N-1}$.

Applying Lemma~\ref{lem:map-vs-LG}, the valuative criterion for $( \underline{\xi}|_{\underline{\mathscr U}},
\xi_{\eta}|_{\underline{\mathscr U}}, \alpha_{\eta}|_{\underline{\mathscr
    U}})$ translates into the following
\begin{lemm}
  Up to isomorphisms there exists at most one
  $(\mathscr U, \Sigma^{\mathscr U}, h)$, such that
  \begin{enumerate}
  \item
    $\Sigma^{\mathscr U} \subset \mathscr U \to T$ is (nonproper) family of
    pointed balanced twisted curves and $h: \mathscr U\to X_f$ is a
    representable morphism, extending $(\mathscr U_{\eta}, \Sigma^{\mathscr
      U_{\eta}}:= \mathscr U_{\eta}\cap \Sigma^{\mathscr C_{\eta}}, h_{\eta})$,
  \item
    the coarse moduli of $\mathscr U$ is isomorphic to $\underline{\mathscr
      U}$, extending the identification of the coarse moduli of $\mathscr
    U_{\eta}$ and $\underline{\mathscr U}|_{\eta}$ coming from
    $\alpha_{\eta}|_{\underline{\mathscr U}}$,
  \item
    the map $\underline{\mathscr U} \to \mathbb P^{N-1}$ induced by the
    composition of $\mathscr U \overset{h}{\to} X_f \overset{F}{\to} \mathbb
    P^{N-1}$ is equal to $\underline{h}$.
  \end{enumerate}
  When $\underline{X} = V\git_{\theta} G$ is projective and $S$ is full,
  possibly after finite base change
  such a triple $(\mathscr U, \Sigma^{\mathscr U}, h)$ always exists.
\end{lemm}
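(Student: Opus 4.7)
The plan is to deduce the lemma from a valuative criterion for maps into the projective scheme $\underline{X_f}$, and then lift the resulting map to the stack $X_f$ using the \'etale-local structure of $X_f$ at its stacky points. Throughout I will use the reductions of Section~\ref{sec:quick-reduction-general-case}: $V$ is reduced, $R_0 = \mathbb C$, and $R_\bullet$ is generated in degree one by $S$, so that $\underline{X_f} \hookrightarrow \mathbb P^{N-1}$ is a closed embedding.

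For uniqueness, given two extensions $(\mathscr U^i, \Sigma^{\mathscr U^i}, h^i)$, $i = 1,2$, condition (2) identifies both coarse moduli with $\underline{\mathscr U}$, and composing each $h^i$ with the coarse moduli map yields morphisms $\underline{h^i}: \underline{\mathscr U} \to \underline{X_f}$. Condition (3) together with the closed embedding $\underline{X_f} \hookrightarrow \mathbb P^{N-1}$ forces $\underline{h^1} = \underline{h^2}$. The twisted-curve structure on each $\mathscr U^i$ is then pinned down by representability of $h^i$, compatibility with $\mathscr U_\eta$ on the generic fiber, and the orders of the stabilizers of $X_f$ along the image, so $\mathscr U^1 \cong \mathscr U^2$ canonically as families of twisted curves over $T$. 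Finally, two representable morphisms to the separated DM stack $X_f$ that agree on the dense open $\mathscr U_\eta$ must coincide.

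For existence, the fullness of $S$ gives $V_f = V^{\mathrm{ss}}(\theta) = V^{\mathrm{s}}(\theta)$, so $X_f = X$ and $\underline{X_f}$ is projective. The composite $\underline h|_\eta = F \circ \underline{h_\eta}$ factors through $\underline{X_f}$; since $\underline{\mathscr U}$ is $T$-flat and $\underline{X_f}$ is closed in $\mathbb P^{N-1}$, the entire map $\underline h$ factors through $\underline{X_f}$, giving $\underline g: \underline{\mathscr U} \to \underline{X_f}$ extending $\underline{h_\eta}$. It remains to lift $\underline g$ to a representable morphism $h: \mathscr U \to X_f$ from a family of twisted curves $\mathscr U \to T$ with coarse moduli $\underline{\mathscr U}$, matching $(\mathscr U_\eta, h_\eta)$ on $\eta$. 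This is local on $\underline{\mathscr U}$: away from the preimage of the stacky locus of $X_f$ no orbifold structure is needed, while near a closed point $q \in \underline{\mathscr U}_\zeta$ mapping to a stacky point of stabilizer order $n$, an \'etale local model $[\operatorname{Spec} B / \mu_n] \to X_f$ reduces the problem to producing a $\mu_n$-cover of the local curve at $q$ extending the cover already given over $\eta$. After a finite base change on $T$ to absorb monodromy, such a cover exists and is unique, by the Abramovich--Vistoli theory of twisted curves \cite{abramovich2002compactifying, abramovich2008gromov}. Patching these local lifts via the uniqueness statement yields the global $(\mathscr U, h)$.

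I expect the main obstacle to lie in this final local construction: producing the orbifold structure on the closed fiber so that $\mathscr U \to T$ is $T$-flat and balanced, and so that it matches $\mathscr U_\eta$ under the identification of coarse moduli. This will rely on tameness (automatic in characteristic zero), on the absence of base points in $\underline{\mathscr U}$ ensuring that $\underline g$ lands in the stable locus where the \'etale models above are valid, and on a sufficient finite base change on $T$ so that the stabilizer data along $\mathscr U_\eta$ and $\mathscr U_\zeta$ can be identified consistently.
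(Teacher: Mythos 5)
Your overall strategy coincides with the paper's: reduce everything to the induced map between coarse moduli spaces $\underline{h}^\prime:\underline{\mathscr U}\to \underline{X}_f$, show that such a map extending $\underline{h}^\prime_\eta$ exists (in the projective, full case) and is unique, and then pass between $\underline{h}^\prime$ and the triple $(\mathscr U,\Sigma^{\mathscr U},h)$ via the local construction of \cite[Proposition~6.0.1]{abramovich2002compactifying}. Your existence argument (factoring $\underline h$ through the closed subscheme $\underline{X}_f\subset \mathbb P^{N-1}$ using density of the generic fiber and reducedness of $\underline{\mathscr U}$) and your appeal to the Abramovich--Vistoli theory for the lift to twisted curves are exactly what the paper does.

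There is, however, one genuine gap in the uniqueness argument. You deduce $\underline{h^1}=\underline{h^2}$ from condition (3) together with the claim that $\underline{X}_f\hookrightarrow \mathbb P^{N-1}$ is a closed embedding, justified by the reduction that $R_\bullet$ is generated by $S$. But that reduction (item (4) of Section~\ref{sec:quick-reduction-general-case}) is only available under the extra hypotheses that $V\git_{\theta}G$ is projective and $S$ is full, whereas the uniqueness assertion of the lemma must hold in general: it is what feeds into the separatedness of $\Phi$ and hence of $\mathrm{LGQ}^{\Omega}_{g,k}(X,d)$ in Theorem~\ref{1}, which carries no projectivity or fullness hypothesis. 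Without fullness, $V_f$ is a proper open subset of $V^{\mathrm{ss}}(\theta)$ and the morphism $\underline F:\underline{X}_f\to\mathbb P^{N-1}$ defined by $f_1,\ldots,f_N$ alone need not be injective, so agreement of the compositions with $\underline F$ does not force $\underline{h^1}=\underline{h^2}$. The correct (and simpler) argument, which is the paper's, is that $\underline{h^1}$ and $\underline{h^2}$ both restrict to $\underline{h}^\prime_\eta$ on the dense open $\underline{\mathscr U}|_{\eta}$ of the reduced scheme $\underline{\mathscr U}$, and $\underline{X}_f$ is separated, so they coincide; condition (3) is not needed as an input at all --- the paper observes it is automatic from (1) and (2), again by density, reducedness and separatedness of $\mathbb P^{N-1}$.
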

\begin{proof}
  First note that once (1) and (2) are satisfied, (3) is automatic since
  $\underline{\mathscr U}|_{\eta}$ is dense in $\underline{\mathscr U}$ and
  $\underline{\mathscr U}$ is reduced.
  Let $\underline{X}_f$ be the coarse moduli of $X_{f}$.
  Let $\underline{h}^{\prime}_{\eta}: \underline{\mathscr U}|_{\eta} \to
  \underline{X}_f$ be the map induced by $h_{\eta}$, and let $\underline{F}:
  \underline{X}_{f} \to \mathbb P^{N-1}$ be the map induced by $F$.

  Given any $(\mathscr U, \Sigma^{\mathscr U}, h)$ satisfying (1) and (2), $h$ induces an
  \[
    \underline{h}^\prime: \underline{\mathscr U} \longrightarrow \underline{X}_f,
  \]
  extending $\underline{h}^{\prime}_{\eta}$.
  Conversely, given such an $\underline{h}^\prime$ extending
  $\underline{h}^{\prime}_{\eta}$, possibly after finite base change there
  exists a unique $(\mathscr U, \Sigma^{\mathscr U}, h)$ up to unique
  isomorphism satisfying (1) and (2)
  such that $\underline{h}^\prime$ is the induced map between coarse moduli spaces.
  Indeed, this follows from the proof for
  \cite[Proposition~6.0.1]{abramovich2002compactifying}. More precisely, the
  $f_1: C_1 \to \mathbf M$ in \cite{abramovich2002compactifying} constructed in Step 1 is replaced by the given
  $(\underline{\mathscr U}, \Sigma^{\underline{\mathscr U}},
  \underline{h}^\prime)$.
  The remaining steps are constructions that are local on the coarse moduli of the curves.

  To finish the proof, it remains to study the existence and uniqueness of
  the $\underline{h}^\prime$ extending $\underline{h}^{\prime}_{\eta}$.
  Since $\underline{\mathscr U}|_{\eta}$ is dense in $\underline{\mathscr U}$,
  $\underline{\mathscr U}$ is reduced,
  and $\underline{X}_f$ is separated,
  $\underline{h}^\prime$ is unique if it exists.
  When $\underline{X}$ is projective and $S$ is full, $\underline{X}_f =
  \underline{X}$ and $\underline{F}$ is a closed embedding.
  By continuity the given map $\underline{\mathscr U} \to \mathbb P^{N-1}$
  factors through $\underline{X}_f$. Hence in this case $\underline{h}^\prime$ always exists.
  This completes the proof.
\end{proof}

\appendix
\section{Comparison with old stability condition}
\label{sec:appendix}
In this appendix, we compare the $\Omega$-stability condition with the stability
condition in \cite{chang2019mixed} for the Fermat quintic.
Recall the setup of prestable MSP fields from Section~\ref{sec:brief}.
\begin{lemm}
  \label{lem:msp-finite-auto-and-positivity}
  Let $\frac{1}{5} < A < \frac{2}{5}$.
  Let
  \begin{equation}
    \label{eq:non-vanishing-appendix}
    \xi =
    (\mathscr C, \Sigma^{\mathscr C}, \mathscr L, \mathscr N, \varphi, \rho, \mu,
    \nu)
  \end{equation}
  be a prestable MSP field such that
  \[
    (\varphi, \mu) \neq 0, \quad (\mu, \nu) \neq 0, \quad (\rho, \nu)\neq 0
  \]
  hold everywhere on $\mathscr C$. Then the automorphism group of $\xi$ is finite if
  and only if the $\mathbb Q$-line bundle
  $\mathscr L \otimes \mathscr N^{\otimes 2} \otimes (\omega_{\mathscr
    C}^{\log})^{\otimes A}$ has positive degree on each irreducible component of
  $\mathscr C$.
\end{lemm}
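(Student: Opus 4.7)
The plan is first to recognize that the three non-vanishing hypotheses assert precisely that $u:\sC\to[V_1/\Gamma]$ has no base points. Hence $\Omega$-1 of Definition~\ref{stability} is automatic. Moreover, at every non-special point $q\in\sC$ a direct case analysis on which of $\varphi,\mu,\nu,\rho$ vanish at $q$ shows that among the three MSP inequalities listed in Section~\ref{sec:brief} at least one has left-hand side equal to $0$, which is $\leq A$ (respectively $\leq A-\tfrac{1}{5}$) because of the assumption $\tfrac{1}{5} < A < \tfrac{2}{5}$. Thus $\Omega$-2 also holds, and the statement of the lemma reduces to the equivalence: $\mathrm{Aut}(\xi)$ is finite if and only if $\Omega$-3 holds on every irreducible component.

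The ``if'' direction is immediate from Theorem~\ref{thm:intro-separated-Deligne--Mumford-finite-type}: once $\Omega$-3 also holds, $\xi$ is an $\Omega$-stable LG-quasimap for the MSP package, and its moduli is a Deligne--Mumford stack, so $\mathrm{Aut}(\xi)$ is finite.

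For the ``only if'' direction I would argue the contrapositive: assume $\Omega$-3 fails on some irreducible component $\sB\sub\sC$ and produce a one-parameter subgroup of $\mathrm{Aut}(\xi)$. By Corollary~\ref{cor:non-negative-components}(1) we must have $\deg(\omega_\sC^{\log}|_\sB)\leq 0$. If $\deg(\omega_\sC^{\log}|_\sB) = 0$, then the discussion preceding Corollary~\ref{cor:non-negative-components} identifies failure of $\Omega$-3 on $\sB$ with the induced quasimap $u|_\sB$ being a constant regular map into $X_1$; then $\sB$ is a rational bridge or a smooth elliptic component without markings, $\mathrm{Aut}^\circ(\sB,\Sigma^\sC\cap\sB)$ is a positive-dimensional group, and since all sections on $\sB$ are pulled back from a single point of $X_1$ this group lifts tautologically to a positive-dimensional subgroup of $\mathrm{Aut}(\xi|_\sB)$. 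If $\deg(\omega_\sC^{\log}|_\sB) < 0$, then $\sB\cong\mathbb P^1$ has at most one special point, and the strategy is to mimic Lemma~\ref{lem:finite-automorphisms-P-N-1-case}: pick a coordinate $z$ on $\sB$ fixing that point, let $\mathbb C^*$ act on $\sB$ by $z\mapsto tz$, lift this action compatibly to $(\sL|_\sB, \sN|_\sB)$ with integer weights $(w_1, w_2)$, observe that the failure of $\Omega$-3 together with the already-established $\Omega$-2 forces each nontrivial section among $\varphi_i,\rho,\mu,\nu$ on $\sB$ to be a single monomial in $z$ after choosing trivializations, and then solve the resulting system of integer weight equations for $(w_1, w_2)$.

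The main obstacle is the last case: one must verify that the weight system in the two unknowns $(w_1, w_2)$ admits a common solution exactly when $\Omega$-3 fails, for each admissible combination of nonzero sections. The MSP bi-graded ring structure is what makes this possible; in particular the presence of the slope-zero element $uv$ coupling $\sN$ into the stability condition supplies the extra degree of freedom needed to treat $\sN$ on the same footing as $\sL$, extending the single-bundle calculation of Lemma~\ref{lem:finite-automorphisms-P-N-1-case} to the two-bundle MSP setting.
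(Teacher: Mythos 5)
Your reduction to ``$\mathrm{Aut}(\xi)$ finite $\Leftrightarrow$ $\Omega$-3'' is correct and matches the paper: the non-vanishing hypotheses make $\Omega$-1 and $\Omega$-2 automatic for $\tfrac15<A<\tfrac25$, and the ``if'' direction is exactly the paper's appeal to Theorem~\ref{thm:intro-separated-Deligne--Mumford-finite-type}. The $\deg(\omega_{\mathscr C}^{\log}|_{\mathscr B})=0$ case is also essentially fine (failure of positivity there does mean the induced quasimap is constant, and the constant data is preserved by the positive-dimensional automorphism group of the component). The problem is the case $\deg(\omega_{\mathscr C}^{\log}|_{\mathscr B})<0$, where your torus-weight scheme does not work as stated.

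Concretely, the paper's analysis (its observations (i)--(v)) pins down $\deg(\mathscr L\otimes\mathscr N)=0$ and isolates the case $\deg(\mathscr L)=-\tfrac15$, $\Sigma^{\mathscr C}$ a single orbifold point, $\varphi\equiv 0$, $\mu$ an isomorphism, $\rho$ nonvanishing and $\nu$ vanishing only at the marking. There $\rho/\nu^{5}$ is a constant multiple of the pullback of $\operatorname{d}\!z$, which is invariant under the translations $z\mapsto z+a$ but scales with weight $1$ under your torus $z\mapsto tz$. No equivariant lift of that torus to $(\mathscr L,\mathscr N)$ makes both $\rho$ and $\nu$ invariant (the weight of $\omega_{\mathscr C}^{\log}$ at the interior fixed point $0$ is nonzero), so your ``system of integer weight equations'' has no solution --- yet $\Omega$-3 fails and $\mathrm{Aut}(\xi)$ is infinite via the unipotent subgroup. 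Hence the equivalence you propose to verify (``weight system solvable exactly when $\Omega$-3 fails'') is false, and the one-parameter subgroups you need are not always tori fixing two points. A second, lesser issue: your ``observation'' that every nontrivial section becomes a monomial is not an observation but the crux; it only holds after the degree bookkeeping forced by the three non-vanishing hypotheses (which also rules out, e.g., $\deg\mathscr N<0$ with $\rho\not\equiv 0$), i.e., precisely the part of the paper's proof you have skipped. Running Lemma~\ref{lem:finite-automorphisms-P-N-1-case} ``in reverse'' requires exhibiting automorphisms, and for that you must either admit translation subgroups or reduce to the paper's three explicit cases first.
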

\begin{proof}
  By Theorem~\ref{thm:intro-separated-Deligne--Mumford-finite-type}, (2) implies
  that that $\xi$ has finitely many automorphisms.

  Conversely, suppose that $\mathscr L \otimes \mathscr N^{\otimes 2}
  \otimes (\omega_{\mathscr C}^{\log})^{\otimes A}$ has non-positive degree on
  some irreducible component.
  We would like to show that $\mathrm{Aut}(\xi)$ is infinite.
  Obviously it suffices to consider the case when $\mathscr C$ is smooth and
  irreducible and  $\mathrm{Aut}(\mathscr C, \Sigma^{\mathscr C})$ is infinite.
  Thus from now on we assume
  \begin{equation}
    \label{eq:negative-degree-appendix}
    \deg(
    \mathscr L \otimes \mathscr N^{\otimes 2}
    \otimes (\omega_{\mathscr C}^{\log})^{\otimes A}) \leq  0.
  \end{equation}

  We make several observations.
  \begin{enumerate}[(i)]
  \item
    $\deg(\mathscr L \otimes \mathscr N^{\otimes 2} \otimes (\omega_{\mathscr
      C}^{\log})^{\otimes (1/5)}) \leq 0$. Indeed $\deg(\omega_{\mathscr
      C}^{\log})\in \mathbb Z$ and \eqref{eq:non-vanishing-appendix} implies
    $\deg(\mathscr L), \deg(\mathscr N) \in \frac{1}{5} \mathbb Z$.
  \item
    $\deg(\mathscr L \otimes \mathscr N) \in \mathbb Z$. Indeed,
    suppose $q\in \mathscr C$ is an orbifold point. If $\mathscr L \otimes
    \mathscr N$ has nontrivial monodromy at $q$. Then $\mu(q) = 0$. Then
    $\varphi(q) \neq 0$ and $\nu(q) \neq 0$. But this implies that both $\mathscr
    L$ and $\mathscr N$ have nontrivial monodromy at $q$, a contradiction.
    Hence $\mathscr L \otimes \mathscr N$ has trivial monodromy at every orbifold
    point of $\mathscr C$. Thus its degree must be an integer.
  \item
    $\deg(\mathscr L \otimes \mathscr N) \geq 0$. Indeed, if
    $\deg(\mathscr L \otimes \mathscr N) < 0$, then $\mu \equiv 0$. Thus $\nu$ is
    nonvanishing and $\mathscr N \cong \mathcal O_{\mathscr C}$. Then we have
    $\deg(\mathscr L) < 0$, and thus $\varphi_i \equiv 0$ for all $i$. This
    contradicts to the assumption that $(\varphi,\mu)$ is nonvanishing.
  \item
    $\deg(\mathscr N) \geq 0$. Indeed, if $\deg(\mathscr N) < 0$,
    then $\nu \equiv 0$ and $\rho$ is nonvanishing. 
    Since $\mathrm{Aut}(\mathscr C, \Sigma^{\mathscr C})$ is infinite, this implies that
    $\deg(\mathscr L) = \frac{1}{5} \deg(\omega_{\mathscr C}^{\log}) \leq 0$. This contradicts to (iii).
  \item
    $\deg(\mathscr L \otimes \mathscr N) = 0$. Indeed, otherwise by (ii) and (iii) we
    must have $\deg(\mathscr L \otimes \mathscr N) \geq 1$. Then using (i) and 
    $\deg(\omega^{\log}_{\mathscr C}) \geq -2$, we must have $\deg(\mathscr
    N) <0$, contradicting to (iv).
  \end{enumerate}
  Since $\mathrm{Aut}(\mathscr C, \Sigma^{\mathscr C})$ is infinite, we are in one of the following three cases:
  \paragraph{Case 1: $\deg(\omega_{\mathscr C}^{\log}) = 0$}
  In this case, $\omega_{\mathscr C}^{\log}$ is trivial and any trivialization
  is fixed by $\mathrm{Aut}(\mathscr C, \Sigma^{\mathscr C})$.
  By (iv) and (v) above, \eqref{eq:negative-degree-appendix} implies
  $\deg(\mathscr L) = \deg(\mathscr N) = 0$.
  Thus each field is either nonvanishing or identically zero. Hence it is easy
  to see that $\xi$ has infinitely many automorphisms.

  \paragraph{Case 2: $\deg(\omega_{\mathscr C}^{\log}) < 0 $ and $\deg(\mathscr
    L^{\otimes (-5)} \otimes \omega_{\mathscr C}^{\log}) \geq 0$}
  Combining (i) and (v), we have $\deg(\mathscr
  L^{\otimes (-5)} \otimes \omega_{\mathscr C}^{\log}) = 0$. 
  Then $\deg(\mathscr L) \not\in \mathbb Z$, and hence $\Sigma^{\mathscr C} \neq
  \emptyset$. Then $\Sigma^{\mathscr C}$ is a single orbifold point and
  $\deg(\mathscr L) = -\deg(\mathscr N) = - \frac{1}{5}$, using (v). Thus $\varphi \equiv
  0$ and $\mu$ is an isomorphism $\mathscr L \otimes \mathscr N \cong
  \mathcal O_{\mathscr C}$. This way, $\xi$ is equivalent to the datum
  \[
    (\mathscr C, \Sigma^{\mathscr C}, \mathscr L, \rho, \nu),
  \]
  where $\rho \in H^0(\mathscr C, \mathscr L^{\otimes (-5)} \otimes
  \omega_{\mathscr C}^{\log})$ and
  $\nu \in H^0(\mathscr C, \mathscr L^{\otimes (-1)})$.
  Note that for degree reason, $\nu$ must have some zeros and $\rho$ is nonvanishing. If $\nu\equiv
  0$, then any automorphism of $(\mathscr C, \Sigma^{\mathscr C})$ lifts to an
  automorphism of $\xi$. Otherwise, it is easy to see that an automorphism
  of $(\mathscr C, \Sigma^{\mathscr C})$ lifts to an automorphism of $\xi$ if
  and only if it fixes the rational section $\rho/\nu^{5}$ of $\omega_{\mathscr
    C}^{\log}$. Choose coordinate $z$ on the coarse moduli of $\mathscr C$,
  which is isomorphic to $\mathbb P^1$, such that the image of $\Sigma^{\mathscr
    C}$ is at infinity. Then $\rho/\nu^{5}$ is a constant multiple of the pullback of
  $\operatorname{d}\!z$.
  Hence it is left invariant by any automorphism of $\mathscr C$ that induces $z
 \mapsto z + a, a\in \mathbb C$ on the coarse moduli. Hence $\mathrm{Aut}(\xi)$
  is always infinite.
  \paragraph{Case 3: $\deg(\omega_{\mathscr C}^{\log}) < 0, \deg(\mathscr
    L^{\otimes (-5)} \otimes \omega_{\mathscr C}^{\log}) < 0$}
  Thus $\rho \equiv 0$ and $\nu$ is nonvanishing. Thus $\mathscr N\cong \mathcal
  O_{\mathscr C}$ and $\xi$ is equivalent to
  \[
    (\mathscr C, \Sigma^{\mathscr C}, \mathscr L, \varphi_1 ,\ldots, \varphi_5, \mu).
  \]
  This is equivalent to a map to $\mathbb P^5$ of degree $\deg(\mathscr L) \geq
  0$. But by (iv) and (v) we have $\deg(\mathscr L) \leq 0$. Hence the map
  has degree zero. Hence $\mathrm{Aut}(\xi)$ is infinite.
\end{proof}

\bibliographystyle{alpha}
\bibliography{references.bib}
\end{document}